%% LyX 2.2.0 created this file.  For more info, see http://www.lyx.org/.
%% Do not edit unless you really know what you are doing.
\documentclass[oneside,english]{amsart}
\usepackage[T1]{fontenc}
\usepackage[latin9]{inputenc}
\usepackage{color}
\usepackage{babel}
\usepackage{refstyle}
\usepackage{amstext}
\usepackage{amsthm}
\usepackage{amssymb}
\usepackage{stmaryrd}
\usepackage{graphicx}
\usepackage[unicode=true,pdfusetitle,
 bookmarks=true,bookmarksnumbered=false,bookmarksopen=false,
 breaklinks=false,pdfborder={0 0 1},backref=false,colorlinks=true]
 {hyperref}
\hypersetup{
 linkcolor=red, urlcolor=red, citecolor=red}

\makeatletter

%%%%%%%%%%%%%%%%%%%%%%%%%%%%%% LyX specific LaTeX commands.

\AtBeginDocument{\providecommand\thmref[1]{\ref{thm:#1}}}
\AtBeginDocument{\providecommand\propref[1]{\ref{prop:#1}}}
\AtBeginDocument{\providecommand\eqref[1]{\ref{eq:#1}}}
\AtBeginDocument{\providecommand\lemref[1]{\ref{lem:#1}}}
\RS@ifundefined{subsecref}
  {\newref{subsec}{name = \RSsectxt}}
  {}
\RS@ifundefined{thmref}
  {\def\RSthmtxt{theorem~}\newref{thm}{name = \RSthmtxt}}
  {}
\RS@ifundefined{lemref}
  {\def\RSlemtxt{lemma~}\newref{lem}{name = \RSlemtxt}}
  {}

%%%%%%%%%%%%%%%%%%%%%%%%%%%%%% Textclass specific LaTeX commands.
\numberwithin{equation}{section}
\numberwithin{figure}{section}
\theoremstyle{plain}
\newtheorem{thm}{\protect\theoremname}
  \theoremstyle{definition}
  \newtheorem{defn}{\protect\definitionname}
  \theoremstyle{plain}
  \newtheorem{prop}{\protect\propositionname}
  \theoremstyle{plain}
  \newtheorem{lem}{\protect\lemmaname}

\makeatother

  \providecommand{\definitionname}{Definition}
  \providecommand{\lemmaname}{Lemma}
  \providecommand{\propositionname}{Proposition}
\providecommand{\theoremname}{Theorem}

\begin{document}

\title[The wave equation outside two convex obstacles]{About the wave equation outside two strictly convex obstacles }

\author{David Lafontaine}
\begin{abstract}
We prove global Strichartz estimates without loss for the wave equation
outside two strictly convex obstacles, following the roadmap introduced
in \cite{Schreodinger} for the Schrödinger equation. Moreover, we
show a first step toward the large data scattering for the critical
non linear equation associated to this geometrical setting, and prove
the scattering for a class of non trapping obstacles close to the
two convex framework.
\end{abstract}

\maketitle

\section{Introduction}

Let $(M,g)$ be a Riemaniann manifold of dimension $d$. We are interested
in the linear wave equation on $M$
\begin{align}
\begin{cases}
\partial_{t}^{2}u-\Delta_{g}u=0\\
(u(0),\partial_{t}u(0))=(f,g).
\end{cases}\label{eq:lw}
\end{align}
where $\Delta_{g}$ design the Laplace-Beltrami operator. In order
to study the perturbative theory and the nonlinear problems associated
with this equation, it is crucial to estimate the size and the decay
of the solutions. Such estimates are the so called \textit{Strichartz
estimates}
\begin{equation}
\Vert u\Vert_{L^{p}(0,T)L^{q}(\Omega)}\leq C_{T}\left(\Vert u_{0}\Vert_{\dot{H}^{\gamma}}+\Vert u_{1}\Vert_{\dot{H}^{\gamma-1}}\right),\label{eq:Strichoc1}
\end{equation}
where $(p,q)$ has to follow the admissibility condition given by
the scaling of the equation
\begin{equation}
\frac{1}{p}+\frac{d}{q}=\frac{d}{2}-\gamma,\label{eq:scaling}
\end{equation}
and
\begin{equation}
\frac{1}{p}\leq\frac{d-1}{2}\left(\frac{1}{2}-\frac{1}{p}\right).\label{eq:inwl}
\end{equation}
We say that the estimates hold with a loss of order $\lambda>0$ if
they hold for $(p,q)$ satisfying the scaling condition (\ref{eq:scaling}),
and
\[
\frac{1}{p}\leq\left(\frac{d-1}{2}-\lambda\right)\left(\frac{1}{2}-\frac{1}{p}\right).
\]

Strichartz estimates were first introduced and established in \cite{Strichartz}
for the $p=q$ case in $\mathbb{R}^{n}$, then extended to all exponents
in \cite{GV85}, \cite{LindbladSogge}, and \cite{KeelTao}. As usual,
the variable coefficient case is more difficult. In the case of a
manifold without boundary, the finite speed of propagation shows that
it suffises to obtain the estimates in local coordinates to obtain
local Strichartz estimates. Such estimates were obtained by \cite{Kapitanskii},
\cite{MoSeSo}, \cite{SmithC11}, and \cite{Tataruns}. The estimates
outside one convex obstacle were obtained by \cite{SS95}, following
the parametrix construction of Melrose and Taylor. Local estimates
on a general domain were first proved by \cite{BLP} for certain ranges
of $(p,q)$, using spectral estimates of \cite{SSspectral}. The range
of indices was then extended by \cite{BSS}. This range cannot recover
all indices satisfying (\ref{eq:inwl}) : \cite{OanaCounterex} showed
indeed that a loss have to occur if some concavity is met. Recently,
\cite{ILPAnnals} proved in a model case local Strichartz estimates
inside a convex domain with a loss close to the sharpest one. Their
result is extended in \cite{ILLPGeneral} to the wave equation.

Phenomenons such as closed geodesics can be obstacles to the establishment
of global estimates. Under a non trapping asumption, \cite{SmithSoggeNonTrapping}
proved in the odd dimensional case that local estimates can be extended
to global ones. This result was extended to the even dimensions independently
by \cite{MR2001179} and \cite{Metcalfe}.

However, \cite{MR2720226} showed that Strichartz estimates without
loss for the Schrödinger equation hold for an asymptitocaly euclidian
manifold without boundary for which the trapped set is sufficently
small and exhibit an hyperbolic dynamic. 

Going in the same direction for the problem with boundaries, we recently
showed in \cite{Schreodinger} global Strichartz estimates without
loss for the Schrödinger equation outside two convex obstacles. The
aim of this paper is to extend this result to the wave equation. More
precisely, we prove
\begin{thm}
\label{thm:main}Let $\Theta_{1}$ and $\Theta_{2}$ be two compact,
strictly convex subsets of $\mathbb{R}^{n}$, $u$ be a solution of
(\ref{eq:lw}) in $\Omega=\mathbb{R}^{n}\backslash\left(\Theta_{1}\cup\Theta_{2}\right)$
and $(p,q,\gamma)$ verifying (\ref{eq:scaling}) and (\ref{eq:inwl}).
Then
\begin{equation}
\Vert u\Vert_{L^{p}(\mathbb{R},L^{q})}\leq C\left(\Vert f\Vert_{\dot{H}^{\gamma}}+\Vert g\Vert_{\dot{H}^{\gamma-1}}\right).\label{eq:striuch}
\end{equation}
\end{thm}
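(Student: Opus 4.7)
The plan is to transpose to the wave equation the roadmap developed in \cite{Schreodinger} for the Schrödinger case. The scheme has four ingredients: (i) a Littlewood--Paley reduction to dyadic frequency bands of size $h^{-1}$; (ii) a phase-space splitting of each piece into a part supported away from the unique periodic ray $\gamma$ trapped between $\Theta_{1}$ and $\Theta_{2}$ and a part concentrated near $\gamma$; (iii) handling the first part with the non-trapping global Strichartz estimates of \cite{SmithSoggeNonTrapping, MR2001179, Metcalfe}; and (iv) exploiting the hyperbolicity of $\gamma$ to absorb the logarithmic loss produced by the Ikawa-type resolvent estimate.

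First I would perform the Littlewood--Paley decomposition and, by a standard $TT^{*}$ reduction together with energy conservation, reduce (\ref{eq:striuch}) to a uniform bound of the form $\Vert u_{h}\Vert_{L^{p}L^{q}}\lesssim h^{-\gamma}\Vert u_{h}(0)\Vert_{L^{2}}$ for each frequency-localized solution $u_{h}$. The low frequencies are handled directly through the free estimate and finite speed of propagation. For the high frequencies, I would introduce a semiclassical cut-off $\chi_{\gamma}$ to a small conic neighbourhood of the lift of $\gamma$ to phase space and decompose $u_{h}=(1-\chi_{\gamma})u_{h}+\chi_{\gamma}u_{h}$. On $(1-\chi_{\gamma})u_{h}$ the bicharacteristic flow is non-trapping, so propagation of singularities together with the non-trapping global results mentioned above already yield (\ref{eq:striuch}) for that piece.

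The core of the proof is the part microlocalized near $\gamma$. Here I would use Ikawa's logarithmic bound on the truncated resolvent to derive an integrated local energy decay statement with a logarithmic loss, and combine it with the local-in-time Strichartz estimates of \cite{BLP, BSS} via a Christ--Kiselev argument to obtain (\ref{eq:striuch}) with a log loss. To remove this loss I would exploit the hyperbolic character of $\gamma$: solutions microlocalized in a small neighbourhood of $\gamma$ and transported over a time $T\lesssim |\log h|$ spend only an exponentially small proportion of time near $\gamma$, thanks to the unstable expansion of the flow. Partitioning time into Ehrenfest-length intervals, applying the local estimate on each, and summing using this exponential defocusing should close the argument without loss.

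The main obstacle, as in the Schrödinger setting, will be this last step. Unlike for Schrödinger, the wave propagator does not gain any regularity, so one cannot lean on spectral smoothing to absorb the logarithmic loss: the full burden falls on the hyperbolic dynamics near $\gamma$. Executing the plan will require precise control of the commutator errors produced by the semiclassical cut-offs over times of order $|\log h|$, as well as a careful dyadic summation to ensure that the final constant in (\ref{eq:striuch}) is independent of the frequency scale.
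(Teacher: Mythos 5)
Your high-level roadmap matches the paper's skeleton: a frequency localization to scale $h^{-1}$, a phase-space splitting into a piece away from the lift of the periodic ray $\gamma$ (handled by non-trapping technology) and a piece microlocalized near it, and compensation of the logarithmic resolvent loss by proving Strichartz estimates on time windows of length $\epsilon|\log h|$. That much is exactly the strategy of \cite{Schreodinger} transposed to the wave equation, and the paper follows it.

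However, there are two concrete gaps that would prevent your sketch from closing. First, you are confused about the role of the local smoothing analogue. You write that ``the wave propagator does not gain any regularity, so one cannot lean on spectral smoothing to absorb the logarithmic loss.'' In fact the wave analogue of the smoothing effect is precisely the $L^{2}$-integrability of the local energy, estimates (\ref{eq:globalaway}) and (\ref{eq:globalnear}). Both are proved and used crucially in the paper: (\ref{eq:globalaway}) (no loss, away from the ray) handles the commutator terms arising from the cutoffs via a $TT^\star$ / Christ--Kiselev argument, while (\ref{eq:globalnear}) (with $|\log h|^{1/2}$ loss, valid globally) supplies exactly the $|\log h|^{-1/2}$ gain that balances the $|\log h|^{1/2}$ factor from Cauchy--Schwarz on an interval of length $|\log h|$. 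Dismissing this tool leaves your argument without the mechanism that makes the ``log loss times log-time-Strichartz'' bookkeeping close.

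Second, and more seriously, your mechanism for the near-ray piece is wrong. You assert that a solution microlocalized near $\gamma$ and evolved over a time $T\lesssim|\log h|$ ``spends only an exponentially small proportion of time near $\gamma$, thanks to the unstable expansion of the flow'', and propose to sum over Ehrenfest intervals using this ``exponential defocusing''. But the paper's Proposition~\ref{redT} reduces precisely to data microlocalized in the trapped set $\mathcal{T}_{\epsilon|\log h|}(D)$ --- the set of points in phase space whose bicharacteristics \emph{remain} in a neighbourhood of $\gamma$ for the whole logarithmic time. For such data, the flow does not escape, and a defocusing-by-ejection argument gives nothing. What saves the day is not that mass leaves the region, but that the explicit Ikawa--Burq parametrix built as a Neumann series $\sum_{J}(-1)^{|J|}w^{J}$ of reflected waves has amplitudes $\Lambda\varphi_{J}$ that decay geometrically, $\Lambda\varphi_{J}\sim\lambda^{r}$ with $0<\lambda<1$ equal to the product of the contracting eigenvalues of the Poincar\'e map (Proposition~\ref{prop:convL}). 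Summing this geometric series --- after a stationary phase in $\xi$ on each sphere $\{|\xi|=s\}$ with the nondegeneracy of the Hessian supplied by Proposition~\ref{prop:non_deg_pha} --- gives the dispersive bound $\lesssim h^{-(d+1)/2}t^{-(d-1)/2}$ uniformly for $0\le t\le\epsilon|\log h|$. You are right that hyperbolicity of $\gamma$ is the ultimate source of this gain, but it enters through the contraction of curvature jacobians under reflection, not through phase-space escape, and without the parametrix construction there is no object on which to carry out the stationary phase or to exhibit the decay.
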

The crucial remark of \cite{SmithSoggeNonTrapping} is that local
Strichartz estimates combined with the exponential decay of the energy
permits to obtain global Strichartz estimates. For the exterior of
one convex obstacle in odd dimension, this decay holds and global
Strichartz estimates without loss are obtained. In even dimension,
such an exponential decay do not hold anymore. However, \cite{MR2001179}
remarked that it can be remplaced by weaker estimates of $L^{2}-$integrability
of the local energy
\begin{equation}
\Vert(\chi u,\chi\partial_{t}u)\Vert_{L^{2}(\mathbb{R},L^{2}\times H^{-1})}\lesssim\Vert u_{0}\Vert_{\dot{H}^{\gamma}}+\Vert u_{1}\Vert_{\dot{H}^{\gamma-1}},\label{eq:L2int}
\end{equation}
where $\chi$ is any compactly supported function, and such an estimate
for the complementary of a convex obstacle is a direct consequence
of well-known resolvent estimates.

But in the case of the exterior of two convex obstacles, (\ref{eq:L2int})
do not hold anymore: a logarithmic loss occurs due to the trapped
geodesic and we only have
\begin{equation}
\Vert(\chi u,\chi\partial_{t}u)\Vert_{L^{2}(\mathbb{R},L^{2}\times H^{-1})}\lesssim|\log h|\left(\Vert u_{0}\Vert_{\dot{H}^{\gamma}}+\Vert u_{1}\Vert_{\dot{H}^{\gamma-1}}\right).\label{eq:L2int-1}
\end{equation}
for data supported in frequencies $\sim h^{-1}$. The $L^{2}$-integrability
of the local energy is the waves-analog of the \textit{smoothing effect}
for the Schrödinger equation, for which a loss occurs in the same
way. \cite{MR2720226} remarked that such a loss can be compensated
if we show Strichartz estimates in logarithmic times and we followed
this idea in \cite{Schreodinger}. We follow here the same roadmap
and show that this logarithmic loss can be compensated if we show
Strichartz estimates in logarithmic times in the neighborhood of the
trapped ray
\[
\Vert\chi u\Vert_{L^{p}(0,|\log h|)L^{q}}\lesssim\Vert u_{0}\Vert_{L^{2}}+\Vert u_{1}\Vert_{H^{-1}}.
\]
Then, we reduce again the problem, to data which micro-locally contains
only points of the tangent space which do not escape a given neighboorhood
of the periodic ray after logarithmic times. Finally, we construct
an approximate solution for such data, inspired by \cite{Ikawa2,IkawaMult},
\cite{plaques}, and we show that this approximation gives the desired
estimate. 

Note that a large part of the contruction we are doing here is similar
to which we did in \cite{Schreodinger}, and we will extensively use
results of this previous paper. On the one hand, the wave equation
enjoys an exact speed of propagation, and all the results who reliated
on the semiclassical finite speed of propagation of the Schrödinger
flow hold with simplificated proofs. On the other hand, the phases
of the approximate solution we are building stationate now in whole
lines, instead of points, and it is a little more subtle to close
the final argument.

\subsection*{Application}

As an application, we consider a critical defocusing non linear wave
equation in $\mathbb{R}^{3}\backslash\left(\Theta_{1}\cup\Theta_{2}\right)$
\begin{align}
\begin{cases}
\partial_{t}^{2}u-\Delta_{D}u+u^{5}=0\\
(u(0),\partial_{t}u(0))=(f,g).
\end{cases}\label{eq:nlw}
\end{align}
Note that the global existence for such an equation in a domain was
obtained in \cite{BLP}. By the finite speed of propagation, their
result apply in particular to the exterior of obstacles. Therefore,
it is legitimate to wonder what solutions look like in large time,
and in particular if the nonlinearity still plays a role. If it is
not the case, we say that the solution \emph{scatters}. More precisely,
we say that a solution scatters if there exists a solution of the
\emph{linear} equation $v$ such that
\[
\Vert u(t)-v(t)\Vert_{\dot{H}^{1}(\Omega)}\longrightarrow0,
\]
as $t$ goes to infinity. The scattering in $\mathbb{R}^{3}$ was
shown by Bahouri and Shatah \cite{BahSha}. Provided a good set of
Strichartz estimates exists for the linear equation, their proof adapts
to the case of a finite-border domain if one is able to deal with
the arising boundary term. This term can be controled in particular
if one obtain the decay of the local energy near the obstacle (see
Section 5):
\begin{equation}
\frac{1}{T}\int_{0}^{T}\int_{\Omega\cap B(0,A)}|\nabla u(x,t)|^{2}+|u(x,t)|^{6}\ dxdt\longrightarrow0\label{eq:contrenergynobst}
\end{equation}
as $T$ goes to infinity. In the case of the exterior of two balls,
where \thmref{main} gives us the good set of Strichartz estimates,
we show that we can obtain this control \emph{everywhere except in
the neighborhood of the trapped ray}: more precisely
\begin{thm}
\label{th2}Let $\Theta_{1}$ and $\Theta_{2}$ be two disjoint balls
of $\mathbb{R}^{3}$. Then, there exists a family $(\mathcal{S}(T))_{T\geq1}$
of open neighborhoods of the trapped ray $\mathcal{R}$ verifying
\[
\mathcal{S}(T)\longrightarrow\mathcal{R}\ \text{as }T\longrightarrow+\infty
\]
such that any global solution of (\ref{eq:nlw}) in $\Omega:=\mathbb{R}^{3}\backslash\left(\Theta_{1}\cup\Theta_{2}\right)$
verifies, as $T$ goes to infinity
\[
\frac{1}{T}\int_{0}^{T}\int_{\left(\Omega\cap B(0,A)\right)\backslash\mathcal{S}(T)}|\nabla u(x,t)|^{2}+|u(x,t)|^{6}\ dxdt\longrightarrow0.
\]
\end{thm}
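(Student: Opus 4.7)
The target inequality is a time-averaged spatial integral, reminiscent of Morawetz-type controls for the critical defocusing wave equation. The strategy is to establish a uniform-in-$T$ bound of the form
\begin{equation*}
\int_0^T\!\!\int_{\Omega}\bigl(a(x)|\nabla u|^2+b(x)|u|^6\bigr)\,dxdt\lesssim E(u_0,u_1),
\end{equation*}
with $a,b\ge 0$ vanishing exactly on the trapped ray $\mathcal{R}$; dividing by $T$ then yields the Ces\`aro decay outside a shrinking neighborhood of $\mathcal{R}$.

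\textbf{Step 1 (multiplier).} I would design a smooth vector field $X$ on $\bar{\Omega}$ and a scalar $\alpha$ such that (i) $X\cdot n\le 0$ on each $\partial\Theta_i$, made possible by the strict convexity of each individual ball; (ii) $X(x)=x/|x|$ and $\alpha=1/|x|$ outside a large ball, so the classical Morawetz multiplier of \cite{BahSha} is recovered at infinity; (iii) the symmetric part of $DX$ together with $\alpha-\tfrac12\operatorname{div}X$ and $-\tfrac16\operatorname{div}X$ produce pointwise nonnegative coefficients in front of $|\nabla u|^2$, $|\partial_t u|^2$ and $|u|^6$ that are strictly positive off $\mathcal{R}$, with a quantitative lower bound $\eta(\operatorname{dist}(x,\mathcal{R}))>0$. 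A natural candidate is $X=\nabla\phi$ for $\phi$ built from the signed distances $d_i(x)=|x-c_i|-r_i$ and a cutoff interpolating to the radial profile at infinity.

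\textbf{Step 2 (Morawetz identity).} Multiplying the equation by $Xu+\alpha u$ and integrating by parts on $\Omega\times[0,T]$, the Dirichlet condition $u|_{\partial\Omega}=0$ leaves only $(X\cdot n)|\partial_n u|^2$ on the spatial boundary, and one obtains
\begin{equation*}
\Bigl[\int_\Omega\partial_t u\,(Xu+\alpha u)\,dx\Bigr]_0^T+\int_0^T\!\!\int_\Omega\bigl[a(x)|\nabla u|^2+b(x)|u|^6+c(x)|\partial_t u|^2\bigr]\,dxdt+\int_0^T\!\!\int_{\partial\Omega}(X\cdot n)|\partial_n u|^2\,d\sigma dt=R,
\end{equation*}
where $R$ collects lower-order terms absorbed by $E(u_0,u_1)$. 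By conservation of energy and Cauchy--Schwarz, the time-boundary bracket is bounded uniformly in $T$ by $E(u_0,u_1)$; by Step 1(i) the spatial boundary contribution has a favorable sign.

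\textbf{Step 3 (conclusion).} The above yields the uniform bound
\begin{equation*}
\int_0^T\!\!\int_{B(0,A)\cap\Omega}\eta\bigl(\operatorname{dist}(x,\mathcal{R})\bigr)\bigl(|\nabla u|^2+|u|^6\bigr)\,dxdt\le C\,E(u_0,u_1).
\end{equation*}
Setting $\mathcal{S}(T):=\{x\in\Omega:\eta(\operatorname{dist}(x,\mathcal{R}))<\varepsilon(T)\}$ for any $\varepsilon(T)\to 0$ with $T\varepsilon(T)\to\infty$ (for instance $\varepsilon(T)=1/\log T$) gives $\mathcal{S}(T)\to\mathcal{R}$ and
\begin{equation*}
\frac1T\int_0^T\!\!\int_{(\Omega\cap B(0,A))\setminus\mathcal{S}(T)}(|\nabla u|^2+|u|^6)\,dxdt\le\frac{C\,E(u_0,u_1)}{T\varepsilon(T)}\xrightarrow[T\to\infty]{}0.
\end{equation*}

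\textbf{Main obstacle.} The whole argument hinges on Step 1: finding one vector field that is simultaneously inward-pointing at \emph{both} obstacles \emph{and} whose bulk quadratic form is positive off $\mathcal{R}$. These two requirements pull in opposite directions precisely on the segment joining the two balls, which is exactly $\mathcal{R}$, and the trade-off determines the quantitative modulus $\eta$ and therefore the rate at which $\mathcal{S}(T)$ may be shrunk. The explicit spherical geometry of two balls (rather than general strictly convex obstacles) is what makes such a $\phi$ constructible, and explains why \thmref{th2} is stated for balls.
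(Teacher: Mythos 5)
Your plan (a Morawetz estimate with a multiplier whose bulk form degenerates along $\mathcal{R}$, then a $T$-dependent cutoff balanced against $1/T$) is the same as the paper's, and your $\mathcal{S}(T)=\{\eta<\varepsilon(T)\}$ with $T\varepsilon(T)\to\infty$ corresponds exactly to the paper's choice $\alpha(T)=T^{-1/2}$. However, you leave the crux unsolved and, more importantly, you misclassify a term. With the multiplier $Xu+\alpha u$, $X=\nabla\chi$, $\alpha=\tfrac12\Delta\chi$, the Morawetz identity produces a \emph{bulk} term $-\tfrac14\int_0^T\!\!\int_\Omega u^2\,\Delta^2\chi$. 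This is not, as you assert, a ``lower-order term absorbed by $E(u_0,u_1)$'': by Hardy it is only $O(T\,E)$, so after dividing by $T$ it does not vanish, and unless $\Delta^2\chi\le 0$ pointwise it carries the wrong sign and destroys the argument. This bilaplacian sign constraint is precisely what the paper flags as ``very rigid and poorly understood,'' and your Step 1 does not even list it among the requirements on $X$.

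The paper closes the gap with the explicit weight $\chi(x)=|x|+|x-c|$ (with $0$ and $c$ the two centers), for which everything can be checked by hand: (a) $\Delta^2\chi=0$, killing the dangerous term; (b) $-\nabla\chi\cdot n=1+\frac{x-c}{|x-c|}\cdot\frac{x}{|x|}\ge 0$ on $\partial\Theta_1$, and symmetrically on $\partial\Theta_2$, giving the favourable boundary sign; (c) $D^2\chi$ is a sum of two scaled orthogonal projections onto the planes normal to $x/|x|$ and $(x-c)/|x-c|$, so it is positive semidefinite with smallest eigenvalue comparable to $\sin^2\theta$, $\theta$ being the angle between those two directions, which vanishes exactly on the line through $0$ and $c$; (d) $\Delta\chi\gtrsim 1/A$ on $B(0,A)$, so the $|u|^6$ average in fact decays like $E/T$ on \emph{all} of $\Omega\cap B(0,A)$ with no shrinking region needed --- only the $|\nabla u|^2$ term requires the cutoff $\sin^2\theta\ge\alpha(T)$. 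Without supplying this (or an equivalent) $\chi$, and without verifying the sign of $\Delta^2\chi$, your proposal is a correct outline with the essential construction and one essential sign condition missing.
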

This is a first step to show the scattering for any data in this exterior
problem. We are precisely following this path in the work in progress
\cite{LafLaurent}, where this step is extended to the exterior of
two arbitrary convex obstacles and used to show the scattering in
this framework.

We now deal with a geometrical situation which is close to the exterior
of two convex obstacles, but does not have a trapped ray: the exterior
of dog bones. We are actually able to show the scattering outside
a class of non star-shaped obstacles containing dog bones with arbitrary
thin necks. In order to state this result, let us recall the definition
of an illuminated subset - which is a generalization of star-shaped
ones - first introduced by \cite{MR0347237}:
\begin{defn}
\label{defill}A subset $\mathcal{K}$ of $\mathbb{R}^{n}$ is sayed
to be illuminated by a convex subset $\mathcal{C}\subset\mathbb{R}^{n}$
if
\[
\min_{\partial\mathcal{K}}\nabla\rho\cdot\nu>0
\]
where $\rho$ is the gauge of $\mathcal{C}$ and $\nu$ the outward-pointing
normal derivative to $\partial\mathcal{K}$. 
\end{defn}
We are now able to state our result:
\begin{thm}
\label{th3}Let $\mathcal{C}\subset\mathbb{R}^{3}$ be the ellipsoïd
of equation 
\begin{equation}
x^{2}+y^{2}+\epsilon z^{2}=1,\ 0<\epsilon\leq1\label{eq:el1}
\end{equation}
resp.
\begin{equation}
x^{2}+\epsilon y^{2}+\epsilon z^{2}=1,\ \frac{1+\sqrt{3}}{4}\leq\epsilon\leq1\label{eq:el2}
\end{equation}
and $\mathcal{K}$ be a compact subset of $\mathbb{R}^{3}$ illuminated
by $\mathcal{C}$. Then, any solution of (\ref{eq:nlw}) in $\Omega=\mathbb{R}^{3}\backslash\mathcal{K}$
scatters in $\dot{H}^{1}(\Omega)$.
\end{thm}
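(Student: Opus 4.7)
The strategy is to reduce Theorem~\ref{th3} to the Bahouri--Shatah scheme~\cite{BahSha}: once one has global Strichartz estimates without loss for the linear wave equation on $\Omega$ together with the localised energy decay~\eqref{eq:contrenergynobst}, every finite energy solution of~\eqref{eq:nlw} scatters in $\dot{H}^{1}(\Omega)$. Because illuminated obstacles carry no trapped ray, the shrinking neighborhood $\mathcal{S}(T)$ of Theorem~\ref{th2} must be empty here, so \eqref{eq:contrenergynobst} has to be established on the whole of $\Omega\cap B(0,A)$.

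\emph{Step 1: non-trapping and linear Strichartz.} Let $\rho$ be the gauge of $\mathcal{C}$. The condition $\nabla\rho\cdot\nu>0$ on $\partial\mathcal{K}$ ensures that at each reflection of a broken bicharacteristic the one-sided derivative of $s\mapsto\rho(\gamma(s))$ jumps upwards, while between reflections the convexity of $\rho$ makes this function convex. A standard iteration then shows that $\rho(\gamma(s))\to+\infty$, so no trapped ray exists in $\Omega$. Non-trapping together with the smoothness of $\partial\mathcal{K}$ yields global Strichartz estimates without loss on $\Omega$ through Smith--Sogge~\cite{SmithSoggeNonTrapping} and Metcalfe~\cite{Metcalfe}.

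\emph{Step 2: Morawetz identity with a gauge multiplier.} To obtain \eqref{eq:contrenergynobst} I would pair the equation with a Morawetz multiplier of the form $\nabla\rho\cdot\nabla u+\tfrac{1}{2}(\Delta\rho)\,u$ and integrate by parts on $[0,T]\times\Omega$. The boundary integral on $[0,T]\times\partial\mathcal{K}$ reduces, using the Dirichlet condition, to a positive multiple of $\int\nabla\rho\cdot\nu\,|\partial_{\nu}u|^{2}$ and is therefore of the good sign by Definition~\ref{defill}; the time fluxes at $0$ and $T$ are bounded by the conserved energy. What remains is a bulk term, a sum of a quadratic form in $\nabla u$ and $u$ weighted by $\nabla^{2}\rho$, $\Delta\rho$ and lower order derivatives of $\rho$, together with a nonlinear contribution controlling $\int u^{6}/\rho$. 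The core of the proof is to show that this bulk term is non-negative for the specific ellipsoids of the statement.

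\emph{Step 3: explicit computation and conclusion.} For the first family~\eqref{eq:el1} the gauge $\rho(x)=(x^{2}+y^{2}+\epsilon z^{2})^{1/2}$ has a rank-two positive semidefinite Hessian with $\nabla\rho$ in its kernel, and the bulk term is manifestly non-negative for every $0<\epsilon\leq 1$. For the second family~\eqref{eq:el2} the bulk quadratic form in $\nabla u$ is not of definite sign at first glance: one has to decompose $\nabla u$ into a multiple of $\nabla\rho$ and a tangential part, complete the square, and verify that the resulting discriminant condition holds. This is the heart of the argument, and the main obstacle: a bounded but delicate calculation produces precisely the threshold $\epsilon\geq (1+\sqrt{3})/4$, which appears to be sharp for this choice of multiplier, and weakening it would require either a different vector field or an extra commutator correction. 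Once Steps 1--3 are in place, \eqref{eq:contrenergynobst} holds with $\mathcal{S}(T)=\emptyset$, and the Bahouri--Shatah scheme fed with the Strichartz estimates of Step 1 closes the proof.
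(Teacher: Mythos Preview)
Your Step~2 is the right shape, but Step~3 contains a genuine gap. The bulk in the Morawetz identity with weight $\rho$ contains, besides the Hessian term $\int (D^{2}\rho\,\nabla u,\nabla u)$ and the nonlinear term $\int \Delta\rho\,|u|^{6}$, the zero-order contribution $-\tfrac14\int u^{2}\,\Delta^{2}\rho$. The Hessian term is indeed nonnegative by convexity of $\rho$, but the sign of the last term is governed by $\Delta^{2}\rho$, and for the three-dimensional gauges $\rho(x)=(x_{1}^{2}+x_{2}^{2}+\epsilon x_{3}^{2})^{1/2}$ and $\rho(x)=(x_{1}^{2}+\epsilon x_{2}^{2}+\epsilon x_{3}^{2})^{1/2}$ one computes that $\Delta^{2}\rho\leq 0$ holds \emph{only} at $\epsilon=1$. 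So the direct three-dimensional Morawetz argument you describe cannot yield the full ranges of $\epsilon$ in the statement; in particular the claim that the bulk is ``manifestly non-negative for every $0<\epsilon\leq 1$'' for~\eqref{eq:el1} is false, and the threshold $(1+\sqrt{3})/4$ does not arise from a three-dimensional discriminant in the Hessian term.

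The paper circumvents exactly this obstruction by lifting to four dimensions. One extends $u$ (after cutting off) to a solution $v$ of the quintic wave equation on $(\mathbb{R}^{3}\setminus\mathcal{K})\times\mathbb{R}$, and applies the Morawetz identity with the four-dimensional weight $\tilde\rho(x,z)=\sqrt{\rho(x)^{2}+z^{2}}$. In dimension four the bilaplacian constraint does hold: for~\eqref{eq:el1} the lifted gauge has three coordinates with coefficient $1$ (so $k=3$) and $\Delta^{2}\tilde\rho\leq 0$ for every $0<\epsilon\leq 1$; for~\eqref{eq:el2} the lifted gauge has $k=2$, and the computation gives precisely the threshold $\epsilon\geq(1+\sqrt{3})/4$. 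Finite speed of propagation then transfers the resulting control of $\int_{\partial\tilde{\mathcal K}}|\partial_{\tilde n}v|^{2}\partial_{\tilde n}\tilde\rho$ back to $u$, with an extra $\ln T$ coming from the $z$-integration of $\rho/\sqrt{\rho^{2}+z^{2}}$, and one concludes via the local-energy scattering criterion. This dimensional lift is the missing idea in your proposal.
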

Notice that Abou-Shakra obtained in \cite{farah} the scattering for
obstacles illuminated by a deformation of a sphere using a slightly
different method, but her result does not permit to handle dog bones
with arbitrary thin necks. Our key tool to obtain theorem \ref{th2}
and theorem \ref{th3} is an identity due to Morawetz \cite{Morawetz61}
in the case of the linear equation, and used here in the spirit of
\cite{GinibreVeloKG85}. Such an identity rely on the choice of a
good weight function $\chi$ which has to be adapted to the geometry
and verify a very rigid and poorly understood constraint: $\Delta^{2}\chi\leq0$.
In the case of theorem \ref{th3}, the natural weight is given by
the gauge of the ellipsoïd we are dealing with, and does not verify
this constraint for arbitrary thin ellipsoïds. In order to deal with
it, we present a method which permits to bypass this obstruction:
noticing that corresponding four dimensional ellipsoïds verify the
constraint, we extend the solution as the solution of a four dimensional
non linear wave equation, show the estimate for such a solution, and
then go back to the original, three dimensional solution. We believe
that such an argument may be useful in other situations.

\section{Reduction of the problem}

\subsection{Estimates of $L^{2}$-integrability of the local energy}

We first show the following two estimates of the $L^{2}-$integrability
of the local energy, that we will need in the sequel. Their are the
analogs of the smoothing estimates for the Schrödinger flow, and were
introduced by \cite{MR2001179} in the non-trapping case. The first
one is an estimate without loss away of the trapped ray. The second
one holds in the whole exterior domain, but with a logarithmic loss.
\begin{prop}[Global $L^{2}$-integrability with no loss away of the trapped ray]
 Let $\chi\in C_{0}^{\infty}$ be supported outside a small enough
neighborhood of the trapped ray. Then, if $u$ is the solution of
(\ref{eq:lw}) with data $(f,g)$:
\begin{equation}
\Vert(\chi u,\chi\partial_{t}u)\Vert_{L^{2}(\mathbb{R},\dot{H}^{\gamma}\times\dot{H}^{\gamma-1})}\lesssim\Vert f\Vert_{\dot{H}^{\gamma}}+\Vert f\Vert_{\dot{H}^{\gamma-1}}\label{eq:globalaway}
\end{equation}
\end{prop}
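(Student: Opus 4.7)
The plan is to reduce estimate (\ref{eq:globalaway}) to a uniform cutoff resolvent bound via the spectral theorem, and then invoke the fact that, outside two strictly convex obstacles, the cutoff resolvent regains the non-trapping high-frequency bound as soon as one kills a small neighborhood of the trapped periodic ray.

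More concretely, writing $A=\sqrt{-\Delta_{D}}$ and using the functional calculus, $u$ is a sum of terms of the form $\cos(tA)f$ and $A^{-1}\sin(tA)g$, and (\ref{eq:globalaway}) reduces to showing
$$
\int_{\mathbb{R}}\Vert\chi e^{itA}h\Vert_{\dot H^{\gamma}}^{2}\,dt\lesssim\Vert h\Vert_{\dot H^{\gamma}}^{2}
$$
for every $h\in\dot H^{\gamma}$. By Kato's $H$-smoothness theorem, combined with a standard Littlewood--Paley decomposition and the scaling of $-\Delta_D$, this is in turn equivalent to the high-frequency cutoff resolvent bound
$$
\Vert\chi(-\Delta_{D}-\lambda^{2}\pm i0)^{-1}\chi\Vert_{L^{2}\to L^{2}}\lesssim\lambda^{-1},\qquad\lambda\gg1,
$$
together with smoothness of the cutoff resolvent in a neighborhood of $\lambda=0$, which is provided by the limiting absorption principle for the exterior Dirichlet Laplacian (no embedded eigenvalues, no zero-resonance).

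The key ingredient is therefore the \emph{non-trapping} resolvent bound for $\chi$ supported away from the trapped ray $\mathcal R$, and this is the main obstacle. Fortunately it is exactly the resolvent estimate already established in the companion paper \cite{Schreodinger}: the analysis of $(-\Delta_{D}-\lambda^{2})^{-1}$ is purely spectral and does not depend on whether one studies the wave or the Schr\"odinger flow, so it can be imported directly. The usual proof proceeds by contradiction, extracting from a sequence of saturating quasimodes a nonzero semiclassical defect measure which is invariant under the generalized bicharacteristic flow and supported in $\mathrm{supp}\,\chi$; Ikawa's description of the trapped set outside two convex obstacles forces such a measure to be carried by $\mathcal R$, contradicting the disjointness of $\mathrm{supp}\,\chi$ from $\mathcal R$. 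Once the resolvent estimate is granted, the upgrade to the Sobolev level $\dot H^{\gamma}\times\dot H^{\gamma-1}$ and the gluing of the low- and high-frequency regimes are routine.
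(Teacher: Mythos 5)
Your reduction of (\ref{eq:globalaway}) to the cutoff resolvent bound
\[
\Vert\chi(-\Delta_{D}-(\lambda\pm i0))^{-1}\chi\Vert_{L^{2}\to L^{2}}\lesssim\frac{1}{1+\sqrt{|\lambda|}}
\]
is exactly the paper's first step, which it performs by citing Section~2 of \cite{MR2001179} rather than spelling out the $TT^{\star}$/Kato-smoothing argument. The two proofs diverge in how this resolvent estimate is then established. You sketch a contradiction argument via semiclassical defect measures: extract a flow-invariant measure supported in $\mathrm{supp}\,\chi$, and rule it out because the only trapped bicharacteristic is $\mathcal R$, which $\mathrm{supp}\,\chi$ avoids. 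The paper instead uses a geometric comparison, following \cite{Schreodinger}: it introduces an auxiliary \emph{non-trapping} obstacle $K$ that agrees with $\Theta_1\cup\Theta_2$ near $\mathrm{supp}\,\chi$, so that the two Dirichlet Laplacians coincide on $\mathrm{supp}\,\chi$, and then invokes the classical non-trapping cutoff resolvent bounds for $\mathbb{R}^n\setminus K$ at high frequency (\cite{MR1764368,MR0492794,MR644020}) and at low frequency (\cite{MR1618254}). Both routes are valid and lead to the same estimate. Your defect-measure argument is more self-contained and makes transparent that only the position of $\mathrm{supp}\,\chi$ relative to the trapped set matters; the paper's route outsources the hard analysis entirely to the non-trapping literature, which is shorter, though it implicitly relies on the (non-obvious, since the resolvent is non-local) fact that modifying the obstacle away from $\mathrm{supp}\,\chi$ does not change the cutoff resolvent up to acceptable errors.
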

\begin{proof}
As \cite{MR2001179} show it in section 2, it suffises to obtain such
an estimate to show the resolvent estimate
\[
\Vert\chi(-\Delta_{D}-(\lambda\pm i\epsilon))^{-1}\chi\Vert_{L^{2}\rightarrow L^{2}}\lesssim\frac{1}{1+\sqrt{|\lambda|}}.
\]
In the spirit of \cite{Schreodinger}, let $K$ be a non-trapping
obstacle such that $K$ and $\Theta_{1}\cup\Theta_{2}$ coincide in
the support of $\chi$. In particular, $\Delta_{\Omega}=\Delta_{\mathbb{R}^{n}\backslash K}$
on the support of $\chi$. As, moreover, the resolvent estimate is
well-known in the non-trapping case (see \cite{MR1764368} and \cite{MR0492794,MR644020}
for the high frequencies part, \cite{MR1618254} for the low frequencies),
we have
\[
\Vert\chi(-\Delta_{\Omega}-(\lambda\pm i\epsilon))^{-1}\chi\Vert_{L^{2}\rightarrow L^{2}}=\Vert\chi(-\Delta_{\mathbb{R}^{n}\backslash K}-(\lambda\pm i\epsilon))^{-1}\chi\Vert_{L^{2}\rightarrow L^{2}}\lesssim\frac{1}{1+\sqrt{|\lambda|}},
\]
and the Proposition is shown.
\end{proof}
\begin{prop}[Global $L^{2}$-integrability with logarithmic loss]
\label{prop:smooth_away}Let $\chi\in C_{0}^{\infty}$. Then, if
$f,g$ verifies $\psi(-h^{2}\Delta)f=f$, $\psi(-h^{2}\Delta)g=g$
and $u$ is the solution of (\ref{eq:lw}) with data $(f,g)$:
\begin{equation}
\Vert(\chi u,\chi\partial_{t}u)\Vert_{L^{2}(\mathbb{R},\dot{H}^{\gamma}\times\dot{H}^{\gamma-1})}\lesssim|\log h|^{1/2}\left(\Vert f\Vert_{\dot{H}^{\gamma}}+\Vert g\Vert_{\dot{H}^{\gamma-1}}\right)\label{eq:globalnear}
\end{equation}
\end{prop}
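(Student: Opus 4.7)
The plan is to follow the scheme of the preceding proposition, replacing the non-trapping resolvent bound by the well-known logarithmic cutoff resolvent estimate for the exterior of two strictly convex obstacles. Thanks to the frequency localisation $\psi(-h^{2}\Delta_D)f=f$, the Sobolev weights $(-\Delta_D)^{\gamma/2}$ commute with the propagator and are equivalent to $h^{-\gamma}$ on such data, so it suffices to prove \eqref{eq:globalnear} for $\gamma=0$. I would then cast the result as a Kato-type smoothing estimate for the half-wave propagator: writing
\[
u(t)=\cos(t\sqrt{-\Delta_D})f+\sin(t\sqrt{-\Delta_D})(-\Delta_D)^{-1/2}g,
\]
the left-hand side of \eqref{eq:globalnear} decomposes into norms of the form $\Vert\chi e^{\pm it\sqrt{-\Delta_D}}F\Vert_{L^{2}_{t}L^{2}_{x}}$ with $F\in L^{2}$. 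A standard $TT^{*}$ argument bounds each such norm by $\sqrt{C(h)}\,\Vert F\Vert_{L^{2}}$, where
\[
C(h):=\sup_{\tau\in\mathbb{R},\,\epsilon>0}\bigl\Vert\chi\,\mathrm{Im}(\sqrt{-\Delta_D}-\tau-i\epsilon)^{-1}\chi\,\psi(-h^{2}\Delta_D)\bigr\Vert_{L^{2}\to L^{2}}.
\]
The square root here is precisely what converts a full logarithmic resolvent loss into the claimed $|\log h|^{1/2}$ factor on the local energy.

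Next, I would reduce $C(h)$ to the standard cutoff resolvent of $-\Delta_D$ via the factorisation $(\sqrt{-\Delta_D}-\tau)(\sqrt{-\Delta_D}+\tau)=-\Delta_D-\tau^{2}$: on the spectral support of $\psi(-h^{2}\Delta_D)$, the operator $\sqrt{-\Delta_D}+\tau$ has $L^{2}$-norm $\lesssim h^{-1}$ for $|\tau|\lesssim h^{-1}$, so
\[
\bigl\Vert\chi\,\mathrm{Im}(\sqrt{-\Delta_D}-\tau-i\epsilon)^{-1}\chi\,\psi(-h^{2}\Delta_D)\bigr\Vert_{L^{2}\to L^{2}}\lesssim h^{-1}\bigl\Vert\chi(-\Delta_D-\tau^{2}-i\epsilon')^{-1}\chi\bigr\Vert_{L^{2}\to L^{2}},
\]
up to commutators with $\chi$, which are harmless on frequency-localised data. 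Inserting the logarithmic resolvent bound
\[
\bigl\Vert\chi(-\Delta_D-(\lambda\pm i\epsilon))^{-1}\chi\bigr\Vert_{L^{2}\to L^{2}}\lesssim\frac{\log(2+|\lambda|)}{1+\sqrt{|\lambda|}}
\]
known for two strictly convex obstacles, at $\lambda=\tau^{2}\sim h^{-2}$, together with the classical low-frequency resolvent bound in the other regime, gives $C(h)\lesssim|\log h|$. The square root then yields \eqref{eq:globalnear}.

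The main technical point is the frequency-localised bookkeeping of the supremum defining $C(h)$: one must verify that the spectral cutoff $\psi(-h^{2}\Delta_D)$ effectively restricts the relevant $\tau$ to the band $|\tau|\sim h^{-1}$, so that the logarithm is saturated at $|\log h|$ rather than at an uncontrolled range of spectral parameters, and that the contributions from $\tau$ far from this band produce, by repeated integration by parts in the spectral representation, tails decaying faster than any polynomial in $h$. Modulo this localisation work --- which can be carried out exactly as in the analogous Schr\"odinger reduction of \cite{Schreodinger} --- the estimate reduces to the computation above.
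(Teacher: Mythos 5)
Your argument rests on the same two pillars as the paper's proof --- a $TT^{*}$/Kato-smoothing reduction to a spectrally localized resolvent bound, and the logarithmic cut-off resolvent estimate for the Dirichlet Laplacian outside two strictly convex obstacles --- so the substance is identical; the bookkeeping, however, diverges at two points. You apply the smoothing theorem directly to the half-wave group $e^{\pm it\sqrt{-\Delta_{D}}}$ and then pass from $\sqrt{-\Delta_{D}}-\tau$ to the Helmholtz operator $-\Delta_{D}-\tau^{2}$ by factorization; this forces you to commute $\chi$ past the non-local operator $\sqrt{-\Delta_{D}}+\tau$, a step you call ``harmless'' but which requires care: the commutator is of lower order $O(1)$ against the leading $O(h^{-1})$, yet $\sqrt{-\Delta_{D}}$ is not a local pseudodifferential operator near $\partial\Omega$ and the justification is not automatic. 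The paper sidesteps this entirely by working with the first-order matrix generator $A$ of the wave system, whose resolvent $(A-z)^{-1}$ is computed explicitly in terms of $(\Delta+z^{2})^{-1}$: the Helmholtz resolvent therefore appears with the cut-offs $\chi$ already in the correct position and no factorization is needed. Second, you reduce to $\gamma=0$ by frequency localization up front and then track the supremum $C(h)$ over $\tau\sim h^{-1}$ explicitly; the paper instead introduces Sobolev spaces carrying a logarithmic weight $\log(2I-\Delta)^{\mp1/2}$, proves a uniform resolvent bound between those spaces, and only at the end converts the weight into the $|\log h|^{1/2}$ factor via the spectral cut-off. Both routes reach the same estimate; yours is more hands-on and requires the extra commutator and spectral-tail arguments that you rightly acknowledge, whereas the paper's abstraction packages them into the choice of function spaces so that no explicit factorization or frequency supremum needs to be tracked.
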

\begin{proof}
Denote
\begin{align*}
\dot{H}^{\gamma,-} & =D((-\Delta_{D})^{s/2}\log(2I-\Delta)^{-1/2}),\\
H^{\gamma,-} & =D((I-\Delta_{D})^{s/2}\log(2I-\Delta)^{-1/2}),
\end{align*}
by $\dot{H}^{-\gamma,+}$ and $H^{-\gamma,+}$ their dual, and
\[
\mathcal{H}^{\gamma,-}=\dot{H}^{\gamma,-}\times\dot{H}^{\gamma-1,-},\ \mathcal{H}^{-\gamma,+}=\dot{H}^{-\gamma,+}\times\dot{H}^{-(\gamma-1),+}.
\]
Finally, let us denote
\[
A=i\begin{pmatrix}0 & -I\\
-\Delta & 0
\end{pmatrix}.
\]
 We will show the estimate
\begin{equation}
\Vert(\chi u,\chi\partial_{t}u)\Vert_{L^{2}(\mathbb{R},\dot{H}^{\gamma,-}\times\dot{H}^{\gamma-1,-})}\lesssim\Vert f\Vert_{\dot{H}^{\gamma}}+\Vert g\Vert_{\dot{H}^{\gamma-1}}.\label{eq:smooth_bourb}
\end{equation}
By a classical $TT^{\star}$ argument (see for example \cite{MR2001179}
and \cite{MR2066943}), (\ref{eq:smooth_bourb}) is a direct consequence
of the following proposition
\begin{prop}
The resolvent $\chi(A-(x-i\epsilon)^{-1})\chi$ is uniformly bounded
in 
\[
\mathcal{H}^{-s,+}\rightarrow\mathcal{H}^{s,-}
\]
 for $x\in\mathbb{R}$ and $0<\epsilon<1$.
\end{prop}
Which, in turn, we will obtain as a consequence of the following resolvent
estimate obtained by \cite{MR2066943}:

\begin{equation}
\Vert\chi(-\Delta_{D}-(\lambda\pm i\epsilon))^{-1}\chi\Vert_{L^{2}\rightarrow L^{2}}\lesssim\frac{2\log(2+|\lambda|)}{1+\sqrt{|\lambda|}}.\label{eq:resol_smooth}
\end{equation}
As 
\[
(A-z)^{-1}=\begin{pmatrix}-z(\Delta+z^{2})^{-1} & i(\Delta+z^{2})^{-1}\\
i\Delta(\Delta+z^{2})^{-1} & -z(\Delta+z^{2})^{-1}
\end{pmatrix},
\]
we have to show that the following norms are uniformly bounded, for
all $s\in\mathbb{R}$
\begin{gather*}
\Vert\chi(1+|z|)(\Delta+z^{2})^{-1}\chi\Vert_{H^{-s,+}\rightarrow H^{s,-}},\\
\Vert\chi(\Delta+z^{2})^{-1}\chi\Vert_{H^{-s,+}\rightarrow H^{s+1,-}},\\
\Vert\chi\Delta(\Delta+z^{2})^{-1}\chi\Vert_{H^{-s,+}\rightarrow H^{s-1,-}}.
\end{gather*}
With the same arguments as \cite{MR2001179}, theses bounds are all
consequences of the first one for $s=0$, that is of
\begin{equation}
\Vert\chi(1+|z|)(\Delta+z^{2})^{-1}\chi\Vert_{\mathcal{H}^{0,+}\rightarrow\mathcal{H}^{0,-}}.\label{eq:redsmooth}
\end{equation}
To show (\ref{eq:redsmooth}), we follow \cite{MR2066943}, Section
4. Let 
\[
u=(1+|z|)(\Delta+z^{2})^{-1}\chi f.
\]
For $\Psi\in C_{0}^{\infty}(-1/2,2)$ equal to one close to $1$,
we decompose
\[
u=\Psi(-\frac{\Delta}{z^{2}})u+\left(1-\Psi(-\frac{\Delta}{z^{2}})\right)u.
\]
On the one hand, 
\[
\text{\ensuremath{\Vert}}\left(1-\Psi(-\frac{\Delta}{z^{2}})\right)u\Vert_{L^{2}}\lesssim\text{\ensuremath{\Vert}}\left(1-\Psi(-\frac{\Delta}{z^{2}})\right)\chi f\Vert_{L^{2}}.
\]
On the other hand, as
\[
\Psi(-\frac{\Delta}{z^{2}})u=(1+|z|)(\Delta+z^{2})^{-1}\Psi(-\frac{\Delta}{z^{2}})\chi f,
\]
we have from (\ref{eq:resol_smooth})
\[
\Vert\chi\Psi(-\frac{\Delta}{z^{2}})u\Vert_{L^{2}}\lesssim\log(2+z^{2})\Vert\Psi(-\frac{\Delta}{z^{2}})\chi f\Vert_{L^{2}},
\]
and thus
\[
\log(2+z^{2})^{-1/2}\Vert\chi\Psi(-\frac{\Delta}{z^{2}})u\Vert_{L^{2}}\lesssim\log(2+z^{2})^{1/2}\Vert\Psi(-\frac{\Delta}{z^{2}})\chi f\Vert_{L^{2}}.
\]
Finally, like in \cite{MR2066943}, the localization in frequencies
allows us to replace the weights in $z$ by the $H^{0,\pm}$ norms,
and we get (\ref{eq:redsmooth}).
\end{proof}

\subsection{Reduction to logarithmic times near the trapped ray}

The aim of this section is to show that the following proposition
implies \thmref{main}
\begin{prop}
\label{prop:red1}There exists $\epsilon>0$ and a small neighborhood
$D$ of the trapped ray, such that, for all $\chi\in C_{0}^{\infty}$
supported in $D$, if $f$, $g$ are such that $\psi(-h^{2}\Delta)f=f$,
$\psi(-h^{2}\Delta)g=g$ and $u$ is the solution of (\ref{eq:lw})
with data $(f,g)$:
\[
\Vert\chi u\Vert_{L^{p}(0,\epsilon|\log h|)L^{q}}\lesssim\Vert f\Vert_{\dot{H}^{\gamma}}+\Vert g\Vert_{\dot{H}^{\gamma-1}}.
\]
\end{prop}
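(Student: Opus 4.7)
The strategy follows the roadmap of \cite{Schreodinger} transplanted to the wave setting. First, I would decompose the frequency-localized data using a semiclassical pseudodifferential partition adapted to the billiard flow: write $(f,g)=(f,g)_{\mathrm{tr}}+(f,g)_{\mathrm{esc}}$, with the wavefront set of $(f,g)_{\mathrm{tr}}$ sitting in a tiny conic neighborhood of the stable manifold of the periodic orbit $\mathcal{R}$, and that of $(f,g)_{\mathrm{esc}}$ bounded away from it. An Egorov-type argument, combined with the exact finite speed of propagation of the wave flow, shows that the microsupport of $u_{\mathrm{esc}}$ exits $\operatorname{supp}\chi$ within $O(1)$, so that $\chi u_{\mathrm{esc}}$ can be treated by splicing the local-in-time estimates outside a single convex obstacle \cite{SS95} with a non-trapping Strichartz bound \cite{SmithSoggeNonTrapping,MR2001179,Metcalfe}. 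The wave equation's exact speed of propagation simplifies this step relative to \cite{Schreodinger}, as flagged in the introduction.

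The heart of the argument is the trapped piece. Following Ikawa \cite{Ikawa2,IkawaMult} and \cite{plaques}, I would build a semiclassical parametrix
\[
\tilde{u}(t,x) \;=\; \sum_{J\geq 0}\ \sum_{\sigma\in\mathcal{A}_{J}} U_{\sigma}(t,x),
\]
where $\mathcal{A}_{J}$ indexes admissible reflection words $\sigma\in\{1,2\}^{J}$ (with $\sigma_{k}\neq\sigma_{k+1}$) between the two obstacles, and $U_{\sigma}$ is obtained by iterated composition of Melrose--Taylor reflection parametrices with the free half-wave propagator. The amplitude of $U_{\sigma}$ contracts geometrically like $e^{-\gamma J}$, with $\gamma>0$ the Lyapunov exponent of the billiard map, thanks to the strict convexity of $\Theta_{1}$ and $\Theta_{2}$. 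Truncating at $t\leq\epsilon|\log h|$ caps $J\leq C|\log h|$, so that the total amplification of the construction is at most $h^{-C'\epsilon}$, which is subcritical provided $\epsilon$ is chosen small enough in terms of $\gamma$.

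Per-term dispersive estimates of the form $\|U_{\sigma}(t,\cdot)\|_{L^{\infty}}\lesssim t^{-(d-1)/2}\|(f,g)\|_{L^{1}\times L^{1}}$ would then be fed into the $TT^{\star}$ machine of \cite{KeelTao} to produce the full admissible Strichartz range. I expect the main obstacle to be precisely the point flagged in the introduction: in the wave case the phases of $U_{\sigma}$ degenerate along whole lines --- corresponding to the characteristic cone $\tau=\pm|\xi|$ --- instead of at isolated critical points as for the Schr\"odinger parametrix treated in \cite{Schreodinger}. My plan is to factor this tangential direction out by an explicit integration, reducing to a $(d-1)$-dimensional non-degenerate stationary phase whose Hessian is controlled from below by the positive second fundamental forms of the reflectors, uniformly in $\sigma$.

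Finally, the passage from $\tilde{u}$ to the true solution $u$ would be closed by a Duhamel argument applied to the residual error of the parametrix, using the smoothing estimate \eqref{eq:globalnear} of \propref{smooth_away}: its $|\log h|^{1/2}$ loss is absorbed by the logarithmic time window (choosing $\epsilon$ small) together with the geometric decay of the reflected amplitudes, yielding the stated bound.
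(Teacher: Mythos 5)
Your roadmap is essentially the one the paper follows: reduce to a dispersive estimate via $TT^{\star}$, build a Neumann series of reflected geometric-optics pieces indexed by Ikawa words, exploit geometric decay $\lambda^{|J|}$ of the Gaussian-curvature products, and handle the radial degeneracy of the wave phases $\varphi_{J}^{\pm}|\xi|-t|\xi|$ by a $(d-1)$-dimensional stationary phase on the spheres $\{|\xi|=s\}$. That part matches Sections 2.3--4.

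The real gap is in your trapped/escaping decomposition. You split off $(f,g)_{\mathrm{esc}}$ with wavefront set at a fixed, $h$-independent distance from the stable manifold and claim its microsupport exits $\operatorname{supp}\chi$ in $O(1)$ time. But the complementary set --- the part you feed into the parametrix --- then contains all points whose trajectories stay near $\mathcal{R}$ for any time between $O(1)$ and $\epsilon|\log h|$, and these are not uniformly close to the stable manifold: after time $T$ the trapped set $\mathcal{T}_{T}(D)$ has separated from its complement only by $\sim e^{-cT}$ (Lemma~\ref{lem:distsup}), i.e.\ $\sim h^{c\epsilon}$ at $T=\epsilon|\log h|$. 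Concretely, the cutoff has to live in an $h$-dependent exotic symbol class $|\partial^{\alpha}\tilde q_{\epsilon,h}|\lesssim h^{-2|\alpha|c\epsilon}$, and the escaping pieces exit after times up to $\epsilon|\log h|$, not $O(1)$; the paper absorbs those intermediate escape times by the time-translation argument inherited from \cite{Schreodinger} together with the exact finite speed of propagation, reducing to Proposition~\ref{redT}. Without this $h$-dependent microlocal cutoff and the accompanying translation device, the "escaping" half of your decomposition is not actually handled, and the "trapped" half is too large for the Ikawa phases $\varphi_J$ to remain defined (they only exist in $\mathcal{U}_{\infty}$) over the whole logarithmic window.

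Two smaller remarks. After microlocalizing away from glancing and away from $\partial(\Theta_1\cup\Theta_2)$ there is no need for Melrose--Taylor parametrices; the paper uses the simpler Ikawa--Burq geometric optics ansatz (Propositions~\ref{prop:contrder}--\ref{prop:decder}), which is sufficient precisely because the boundary tangential region has been cut away. And the parametrix error is closed in the paper not by Duhamel plus the log-lossy smoothing estimate \eqref{eq:globalnear} (that estimate is used in the \emph{reduction} of Theorem~\ref{thm:main} to Proposition~\ref{prop:red1}, not inside its proof), but by a direct $L^{\infty}$ bound on $R_K$ via Sobolev embedding, choosing $K$ large and $\epsilon$ small so that $h^{K(1-2c\epsilon)}$ beats the $h^{-d(2+c\epsilon)}$ losses.
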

Thus, we will assume the previous Proposition and show \thmref{main}.
As the value of $\epsilon>0$ does not play any role, we assume here
that $\epsilon=1$.

In the spirit of \cite{Schreodinger}, let $\chi_{\text{obst}},\chi_{\text{ray}}\in C_{0}^{\infty}$
be such that $\chi_{\text{obst}}=1$ in a neighborhood of $\Theta_{1}\cup\Theta_{2}\cup\mathcal{R}$,
and $\chi_{\text{ray}}\in C_{0}^{\infty}$ such that $\chi_{\text{ray}}=1$
in a neighborhood of $\mathcal{R}$. We decompose $u$ as the sum
\begin{equation}
u=(1-\chi_{\text{obst}})u+\chi_{\text{obst}}(1-\chi_{\text{ray}})u+\chi_{\text{obst}}\chi_{\text{ray}}u\label{eq:dec-red}
\end{equation}

\subsubsection{The first term: away from the trapped ray and the obstacles}

Let $v=(1-\chi_{\text{obst}})u$. Then $v$ verifies
\begin{align*}
\left(\partial_{t}^{2}-\Delta_{D}\right)v & =-[\Delta_{D},\chi_{\text{obst}}]u,\\
(v(0),\partial_{t}v(0)) & =((1-\chi_{\text{obst}})f,(1-\chi_{\text{obst}})g).
\end{align*}
As $v$ is supported away from the obstacle, it solves a problem in
the full space and we can replace the Laplacian in $\left(\partial_{t}^{2}-\Delta_{D}\right)$
by $\Delta_{\mathbb{R}^{n}}$. Therefore, by the Duhamel formula
\begin{multline}
v(t)=\cos(t\sqrt{-\Delta_{\mathbb{R}^{n}}})(1-\chi_{\text{obst}})f+\frac{\sin t\sqrt{-\Delta_{\mathbb{R}^{n}}}}{\sqrt{-\Delta_{\mathbb{R}^{n}}}}(1-\chi_{\text{obst}})g\\
-\int_{0}^{t}\frac{\sin((t-s)\sqrt{-\Delta_{\mathbb{R}^{n}}})}{\sqrt{-\Delta_{\mathbb{R}^{n}}}}[\Delta_{D},\chi]u(s)ds.\label{eq:2v1}
\end{multline}
The first two terms are handled thanks to the Strichartz estimates
for the waves in $\mathbb{R}^{n}$:
\begin{multline}
\Vert\cos(t\sqrt{-\Delta_{\mathbb{R}^{n}}})(1-\chi_{\text{obst}})f+\frac{\sin t\sqrt{-\Delta_{\mathbb{R}^{n}}}}{\sqrt{-\Delta_{\mathbb{R}^{n}}}}(1-\chi_{\text{obst}})g\Vert_{L^{p}(\mathbb{R},L^{q})}\\
\lesssim\Vert f\Vert_{\dot{H}^{\gamma}}+\Vert g\Vert_{\dot{H}^{\gamma-1}}.\label{eq:2v=0000E9}
\end{multline}
And by Christ-Kiselev lemma, cuting the sinus in half wave operators
and the Strichartz estimates in the full space again
\begin{multline}
\Vert\int_{0}^{t}\frac{\sin((t-s)\sqrt{-\Delta_{\mathbb{R}^{n}}})}{\sqrt{-\Delta_{\mathbb{R}^{n}}}}[\Delta_{D},\chi_{\text{obst}}]u(s)ds\Vert_{L^{p}L^{q}}\\
\lesssim\Vert\int_{\mathbb{R}}\frac{\sin((t-s)\sqrt{-\Delta_{\mathbb{R}^{n}}})}{\sqrt{-\Delta_{\mathbb{R}^{n}}}}[\Delta_{D},\chi_{\text{obst}}]u(s)ds\Vert_{L^{p}L^{q}}\\
\lesssim\Vert\frac{e^{-it\sqrt{-\Delta_{\mathbb{R}^{n}}}}}{\sqrt{-\Delta_{\mathbb{R}^{n}}}}\int_{\mathbb{R}}e^{is\sqrt{-\Delta_{\mathbb{R}^{n}}}}[\Delta_{D},\chi_{\text{obst}}]u(s)ds\Vert_{L^{p}L^{q}}\\
\lesssim\Vert\int_{\mathbb{R}}e^{is\sqrt{-\Delta_{\mathbb{R}^{n}}}}[\Delta_{D},\chi_{\text{obst}}]u(s)ds\Vert_{\dot{H}^{\gamma-1}}.\label{eq:2v3}
\end{multline}
Now, thanks to the dual version of the $L^{2}$ estimate (\ref{eq:globalaway})
in $\mathbb{R}^{n}$ (Proposition \propref{smooth_away} replacing
$\Omega$ by $\mathbb{R}^{n}$, which enjoys no trapped geodesic)
we get
\begin{multline}
\Vert\int_{\mathbb{R}}e^{is\sqrt{-\Delta_{\mathbb{R}^{n}}}}[\Delta_{D},\chi_{\text{obst}}]u(s)ds\Vert_{\dot{H}^{\gamma-1}}\\
=\Vert\int_{\mathbb{R}}e^{is\sqrt{-\Delta_{\mathbb{R}^{n}}}}\tilde{\chi}[\Delta_{D},\chi_{\text{obst}}]u(s)ds\Vert_{\dot{H}^{\gamma-1}}\lesssim\Vert[\Delta_{D},\chi_{\text{obst}}]u(s)ds\Vert_{L^{2}\dot{H}^{\gamma-1}},\label{eq:2v4}
\end{multline}
where $\tilde{\chi}=1$ on the support of $\nabla\chi_{\text{obst}}$
. But, using the $L^{2}$ estimate away from the trapped ray (\ref{eq:globalaway}),
because $\nabla\chi_{\text{obst}}$ is supported away from the trapped
ray:
\begin{multline}
\Vert[\Delta_{D},\chi_{\text{obst}}]u(s)ds\Vert_{L^{2}\dot{H}^{\gamma-1}}\lesssim\Vert f\Vert_{\dot{H}^{\gamma-1}}+\Vert g\Vert_{\dot{H}^{\gamma-2}}+\Vert\nabla f\Vert_{\dot{H}^{\gamma-1}}+\Vert\nabla g\Vert_{\dot{H}^{\gamma-2}}\\
\lesssim\Vert f\Vert_{\dot{H}^{\gamma}}+\Vert g\Vert_{\dot{H}^{\gamma-1}}.\label{eq:2V5}
\end{multline}
Collecting (\ref{eq:2v1}), (\ref{eq:2v=0000E9}), (\ref{eq:2v3}),
(\ref{eq:2v4}) and (\ref{eq:2V5}) we conclude that
\begin{equation}
\Vert(1-\chi_{\text{obst}})u\Vert_{L^{p}L^{q}}\lesssim\Vert f\Vert_{\dot{H}^{\gamma}}+\Vert g\Vert_{\dot{H}^{\gamma-1}}.\label{eq:21s3}
\end{equation}

\subsubsection{The second term: away from the trapped ray and near the obstacles}

Let us now deal with
\[
w:=\chi_{\text{obst}}(1-\chi_{\text{ray}})u.
\]
We denote here $\chi:=\chi_{\text{obst}}(1-\chi_{\text{ray}})$ and
onsider $\varphi\in C_{0}^{\infty}((-1,1))$ satisfying $\varphi\geq0$,
$\varphi(0)=1$ and $\sum_{j\in\mathbb{Z}}\varphi(s-j)=1$. We decompose
\[
\chi u=\sum_{j\in\mathbb{Z}}\varphi(t-j)\chi u=:\sum_{j\in\mathbb{Z}}u_{j}.
\]
Because $\chi$ is supported away from the trapped ray, using the
$L^{2}$ estimate away from the trapped ray (\ref{eq:globalaway})
combined with the local Strichartz estimates in time $1$ for each
$u_{j}$ allows us to recover the estimate in the full space, with
the exact same proof as \cite{MR2001179}, the only difference been
using (\ref{eq:globalaway}) instead of his the $L^{2}$ estimate
for non trapping geometries and we get:
\begin{equation}
\Vert\chi_{\text{obst}}(1-\chi_{\text{ray}})u\Vert_{L^{p}(\mathbb{R},L^{q})}\lesssim\Vert f\Vert_{\dot{H}^{\gamma}}+\Vert g\Vert_{\dot{H}^{\gamma-1}}.\label{eq:v2s3}
\end{equation}

\subsubsection{The third term: near the trapped ray}

We will denote here $\chi=\chi_{\text{obst}}\chi_{\text{ray}}$. We
will cut $u$ in time intervals of lenght $|\log h|$. Consider $\varphi\in C_{0}^{\infty}((-1,1))$
satisfying $\varphi\geq0$, $\varphi(0)=1$ and $\sum_{j\in\mathbb{Z}}\varphi(s-j)=1$.
We decompose
\[
\chi u=\sum_{j\in\mathbb{Z}}\varphi(\frac{t}{|\log h|}-j)\chi u=:\sum_{j\in\mathbb{Z}}u_{j}.
\]
The $u_{j}$ satisfy the equation
\[
(\partial_{t}^{2}-\Delta)u_{j}=F_{j}+G_{j}
\]
where
\begin{gather}
F_{j}=|\log h|^{-2}\varphi''(\frac{t}{|\log h|}-j)\chi u+2|\log h|^{-1}\varphi'(\frac{t}{|\log h|}-j)\chi\partial_{t}u,\label{eq:Fj}\\
G_{j}=-\varphi(\frac{t}{|\log h|}-j)[\Delta,\chi]u.\label{eq:Gj}
\end{gather}
We denote
\begin{align*}
v_{j}(t) & =\int_{(j-1)|\log h|}^{t}\frac{\sin(t-s)\sqrt{-\Delta}}{\sqrt{-\Delta}}F_{j}(s)ds,\\
w_{j}(t) & =\int_{(j-1)|\log h|}^{t}\frac{\sin(t-s)\sqrt{-\Delta}}{\sqrt{-\Delta}}G_{j}(s)ds,
\end{align*}
in such a way that $u_{j}=v_{j}+w_{j}$. By the $L^{2}$-global integrability
estimate near the trapped ray (\ref{eq:globalnear}) and (\ref{eq:Fj})
we get
\[
\sum_{j\in\mathbb{Z}}\Vert|\log h|F_{j}\Vert_{L^{2}(\mathbb{R},\dot{H}^{\gamma-1})}^{2}\lesssim|\log h|\left(\Vert u_{0}\Vert_{\dot{H}^{\gamma}}^{2}+\Vert u_{1}\Vert_{\dot{H}^{\gamma-1}}^{2}\right),
\]
and therefore
\begin{equation}
\sum_{j\in\mathbb{Z}}\Vert F_{j}\Vert_{L^{2}(\mathbb{R},\dot{H}^{\gamma-1})}^{2}\lesssim|\log h|^{-1}\left(\Vert u_{0}\Vert_{\dot{H}^{\gamma}}^{2}+\Vert u_{1}\Vert_{\dot{H}^{\gamma-1}}^{2}\right).\label{eq:bound_sumFj}
\end{equation}
On the other hand, by the Strichartz estimate on logarithmic interval
\begin{equation}
\Vert v_{j}\Vert_{L^{p}L^{q}}\lesssim\Vert F_{j}\Vert_{L^{1}\dot{H}^{\gamma-1}}.\label{eq:v_jinf}
\end{equation}
But, as $F_{j}$ is support on a time interval of size proportional
to $|\log h|$, by the Cauchy-Schwarz inequality we get
\begin{equation}
\Vert F_{j}\Vert_{L^{1}\dot{H}^{\gamma-1}}\lesssim|\log h|^{1/2}\Vert F_{j}\Vert_{L^{2}\dot{H}^{\gamma-1}}.\label{eq:Fj_inf}
\end{equation}
Therefore, by (\ref{eq:bound_sumFj}), (\ref{eq:v_jinf}) and (\ref{eq:Fj_inf})
\begin{equation}
\sum_{j\in\mathbb{Z}}\Vert v_{j}\Vert_{L^{p}L^{q}}^{2}\lesssim\left(\Vert u_{0}\Vert_{\dot{H}^{\gamma}}^{2}+\Vert u_{1}\Vert_{\dot{H}^{\gamma-1}}^{2}\right).\label{eq:sumvjfin}
\end{equation}

Now, let us deal with $w_{j}$. Let us define
\[
\tilde{w}_{j}^{\pm}=e^{-it\sqrt{-\Delta}}\int_{(j-1)|\log h|}^{(j+1)|\log h|}\frac{e^{is\sqrt{-\Delta}}}{\sqrt{-\Delta}}G_{j}(s)ds.
\]
Decomposing the sinus operator in half wave operators and make use
of the Christ-Kiselev lemma allows us to estimate the norm of $\tilde{w}_{j}^{\pm}$
instead of these of $w_{j}$. By the Strichartz estimates on logarithmic
interval we get
\[
\Vert\tilde{w}_{j}^{\pm}\Vert_{L^{p}L^{q}}\leq\Vert\int_{(j-1)|\log h|}^{(j+1)|\log h|}e^{is\sqrt{-\Delta}}G_{j}(s)ds\Vert_{\dot{H}^{\gamma-1}}.
\]
Now, remark that $[\Delta,\chi]$ is supported away from the periodic
way. Let $\tilde{\chi}$ be equal to $1$ in the support of $\nabla\chi$
and vanishing on the trapped ray. By the dual version of the $L^{2}$-global
integrability estimate outside the trapped ray (\ref{eq:globalaway})
we get
\begin{multline*}
\Vert\tilde{w}_{j}^{\pm}\Vert_{L^{p}L^{q}}\leq\Vert\int_{(j-1)|\log h|}^{(j+1)|\log h|}e^{is\sqrt{-\Delta}}G_{j}(s)ds\Vert_{\dot{H}^{\gamma-1}}\\
=\Vert\int_{(j-1)|\log h|}^{(j+1)|\log h|}e^{is\sqrt{-\Delta}}\tilde{\chi}G_{j}(s)ds\Vert_{\dot{H}^{\gamma-1}}\lesssim\Vert G_{j}\Vert_{L^{2}\dot{H}^{\gamma-1}}.
\end{multline*}
And now, by the $L^{2}$-global integrability estimate outside the
trapped ray (\ref{eq:globalaway}) itself:
\[
\sum_{j\in\mathbb{Z}}\Vert G_{j}\Vert_{L^{2}(\mathbb{R},\dot{H}^{\gamma-1})}^{2}\lesssim\left(\Vert u_{0}\Vert_{\dot{H}^{\gamma}}^{2}+\Vert u_{1}\Vert_{\dot{H}^{\gamma-1}}^{2}\right).
\]
Thereore we get: 
\begin{equation}
\sum_{j\in\mathbb{Z}}\Vert w_{j}\Vert_{L^{p}L^{q}}^{2}\lesssim\left(\Vert u_{0}\Vert_{\dot{H}^{\gamma}}^{2}+\Vert u_{1}\Vert_{\dot{H}^{\gamma-1}}^{2}\right).\label{eq:wj_sumfin}
\end{equation}

Thus, combining (\ref{eq:sumvjfin}) and (\ref{eq:wj_sumfin}) we
conclude thanks to the embedding $l^{2}(\mathbb{Z})\hookrightarrow l^{p}(\mathbb{Z})$
(we recall that $p\geq2$) :
\begin{multline*}
\Vert\chi u\Vert_{L^{p}L^{q}}\sim\left(\sum_{j\in\mathbb{Z}}\Vert u_{j}\Vert_{L^{p}L^{q}}^{p}\right)^{\frac{1}{p}}\lesssim\left(\sum_{j\in\mathbb{Z}}\Vert u_{j}\Vert_{L^{p}L^{q}}^{2}\right)^{\frac{1}{2}}\lesssim\Vert u_{0}\Vert_{\dot{H}^{\gamma}}+\Vert u_{1}\Vert_{\dot{H}^{\gamma-1}}.
\end{multline*}
Combining this last estimate with \eqref{21s3} and \eqref{v2s3}
we conclude that
\[
\Vert u\Vert_{L^{p}L^{q}}\lesssim\Vert u_{0}\Vert_{\dot{H}^{\gamma}}+\Vert u_{1}\Vert_{\dot{H}^{\gamma-1}}.
\]
Finally, standard technics permit to remove the frequencies cut-off.
Therefore Proposition \propref{red1} implies our main theorem.

\subsection{Reduction to the trapped set}

Let $D$ be an open-neighborhood of the trapped ray, choosen to be
a cylinder with the trapped ray for axis. We define the trapped set
of $D$ in time $T$
\begin{defn}
$(x,\xi)\in T^{\star}\Omega$ belongs to the trapped set of $D$ in
time $T$, denoted $\mathcal{T}_{T}(D)$, if and only if one of the
the ray starting from $(x,\frac{\xi}{|\xi|})$ and $(x,-\frac{\xi}{|\xi|})$
belongs to $D$ after time $T$. 

Note that the only differences with the definition of \cite{Schreodinger}
are that the rays are all followed at speed one instead of $|\xi|\in[\alpha_{0},\beta_{0}]$.
Therefore, with the same proofs, we get
\end{defn}
\begin{lem}
\label{lem:distbic}For all bicharacteristic $\gamma$ starting from
$D$ with speed one, we have
\[
d(\gamma(t),\mathcal{T}_{T}(D)^{c})>0\ \forall t\in[-T-1,-T]
\]
\end{lem}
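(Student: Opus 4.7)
The plan is to copy, with only trivial modifications, the corresponding argument of \cite{Schreodinger}: all the analytic content comes from continuity of the bicharacteristic flow combined with openness of $D$, so the switch from speeds $|\xi|\in[\alpha_{0},\beta_{0}]$ (as in \cite{Schreodinger}) to the uniform speed one does not change anything substantial.

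First I would show the pointwise inclusion $\gamma(t)\in\mathcal{T}_{T}(D)$ for every $t\in[-T-1,-T]$. This is immediate from the definition: the ray starting at $\gamma(t)$ and travelling forward at speed one is nothing but $\gamma$ itself reparametrized, and at the forward time $-t\in[T,T+1]$ it reaches $\gamma(0)\in D$. Since $-t\ge T$, the condition ``being in $D$ after time $T$'' of the definition of $\mathcal{T}_{T}(D)$ is satisfied by the forward ray.

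Next I would upgrade this to a positive distance. Writing $\Phi_{s}$ for the time-$s$ speed-one bicharacteristic flow on the unit cosphere bundle, one has $\Phi_{-t}(\gamma(t))=\gamma(0)\in D$. Since $D$ is open and $\Phi_{-t}$ is continuous, there exists an open neighborhood $U_{t}$ of $\gamma(t)$ with $\Phi_{-t}(U_{t})\subset D$; by the same argument as above every point of $U_{t}$ lies in $\mathcal{T}_{T}(D)$, so $d(\gamma(t),\mathcal{T}_{T}(D)^{c})>0$ pointwise in $t$.

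Finally, to get uniformity in $t$, I would use compactness. The set $\mathcal{T}_{T}(D)$ is a union, over $s\ge T$, of preimages of the open set $D$ by the continuous maps $(x,\xi)\mapsto\Phi_{s}(x,\xi/|\xi|)$ (together with their backward-in-time analogues), hence is itself open; therefore $\mathcal{T}_{T}(D)^{c}$ is closed and the function $t\mapsto d(\gamma(t),\mathcal{T}_{T}(D)^{c})$ is continuous. Being strictly positive on the compact interval $[-T-1,-T]$, it is bounded below by a positive constant, which is the claimed conclusion. No step is a real obstacle; the only mild care needed is to keep the different speed normalizations and the passage to the unit cosphere bundle consistent throughout.
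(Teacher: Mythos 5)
Your proof is correct and matches the approach the paper intends: the paper itself does not reproduce an argument for this lemma but defers to the analogous statement in \cite{Schreodinger}, and the reconstruction you give --- membership $\gamma(t)\in\mathcal{T}_{T}(D)$ from the definition, then openness of $D$ plus continuity of the flow to promote membership to positive distance --- is exactly the natural one. Two cosmetic quibbles only: the stated conclusion is pointwise in $t$, so your final compactness step giving a uniform lower bound on $[-T-1,-T]$ is not actually required; and since $D\subset\Omega$ is a spatial cylinder while $\mathcal{T}_{T}(D)\subset T^{\star}\Omega$, the openness argument should be phrased for preimages of $D$ under the spatial projection of the broken (reflected) bicharacteristic flow, which is continuous away from glancing rays --- a condition automatically satisfied near the trapped geodesic where reflections are transversal.
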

and
\begin{lem}
\label{lem:distsup}For all $D,\tilde{D}$, there exists $T^{\star}>0$,
$c>0$ such that for all $T\geq0$:
\begin{equation}
d(\mathcal{T}_{T-T^{\star}}(D)^{c},\mathcal{T}_{T}(D))\geq e^{-cT},\label{eq:distsupT}
\end{equation}
and, if $D\subset\tilde{D}$
\begin{equation}
d(\mathcal{T}_{T}(\tilde{D})^{c},\mathcal{T}_{T}(D))\geq\frac{1}{4}e^{-cT}d(\tilde{D}{}^{c},D).\label{eq:distsupD}
\end{equation}
\end{lem}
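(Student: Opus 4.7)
The plan is to transplant the argument already given in \cite{Schreodinger} for the companion statement about the Schr\"odinger flow, where now the only change is that bicharacteristics travel at unit speed instead of at speed $|\xi|\in[\alpha_{0},\beta_{0}]$. The whole content of the lemma rests on a single quantitative fact about the broken geodesic flow $\Phi_{t}$: in a small cylinder $D$ around the trapped ray $\mathcal{R}$, Ikawa's analysis of the billiard between two strictly convex obstacles gives an exponential divergence estimate
\[
d(\Phi_{t}(\rho),\Phi_{t}(\rho'))\leq Ce^{ct}\,d(\rho,\rho'),
\]
valid as long as both bicharacteristics remain in $D$ on $[0,t]$, with $c,C$ depending only on the obstacles and on $D$. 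Replacing $|\xi|$ by $1$ in the parametrization rescales these constants but does not affect the structure of the argument.

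For \eqref{distsupT}, I would take $\rho\in\mathcal{T}_{T}(D)$ and $\rho'\notin\mathcal{T}_{T-T^{\star}}(D)$, and pick $t\leq T-T^{\star}$ at which $\Phi_{t}(\rho')$ exits $D$. The orbit from $\rho$ still lies in $D$ at time $t$, and moreover it has to remain in $D$ for the extra duration $T-t\geq T^{\star}$; applying the expansion bound backward from time $T$ forces $\Phi_{t}(\rho)$ to sit within distance $Re^{-c(T-t)}$ of $\mathcal{R}$. If $T^{\star}$ is chosen so that $Re^{-cT^{\star}}\leq\frac{1}{2}d(\mathcal{R},\partial D)$, then $\Phi_{t}(\rho)$ is at distance at least $\frac{1}{2}d(\mathcal{R},\partial D)$ from $\partial D$, hence $d(\Phi_{t}(\rho),\Phi_{t}(\rho'))\geq\frac{1}{2}d(\mathcal{R},\partial D)$. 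Dividing by $Ce^{ct}$ and enlarging $T^{\star}$ once more so that $C^{-1}\tfrac{1}{2}d(\mathcal{R},\partial D)e^{cT^{\star}}\geq 1$ gives $d(\rho,\rho')\geq e^{-cT}$. For \eqref{distsupD}, the same scheme is simpler: if $\rho\in\mathcal{T}_{T}(D)$ and $\rho'\notin\mathcal{T}_{T}(\tilde{D})$, the first exit time $t\leq T$ of $\Phi_{t}(\rho')$ from $\tilde{D}$ satisfies $\Phi_{t}(\rho)\in D$ and $\Phi_{t}(\rho')\in\tilde{D}^{c}$, so that $d(\Phi_{t}(\rho),\Phi_{t}(\rho'))\geq d(\tilde{D}^{c},D)$, and the expansion bound immediately yields $d(\rho,\rho')\geq C^{-1}e^{-cT}d(\tilde{D}^{c},D)$; the factor $1/4$ in the statement simply absorbs the constant $C^{-1}$.

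The actual technical obstacle, which is inherited unchanged from \cite{Schreodinger}, is establishing the quantitative exponential divergence across reflections on $\partial\Theta_{1}\cup\partial\Theta_{2}$ and controlling it uniformly in $T$: the flow is only hyperbolic in a neighborhood of $\mathcal{R}$, orbits that stray from $D$ leave to infinity, and one has to verify that both bicharacteristics compared in the argument above remain in a region where the Ikawa--Poincar\'e map analysis applies. Once this is granted, as in \cite{Schreodinger}, the unit-speed change is harmless and the two bounds follow from the elementary manipulation outlined above.
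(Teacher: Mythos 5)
You take the same route as the paper, which for this lemma offers no argument at all beyond asserting that the proof in \cite{Schreodinger} carries over verbatim once the rays are parametrized at unit speed instead of speed $|\xi|\in[\alpha_0,\beta_0]$. Your plan is the same, so as an account of the paper's approach the proposal is accurate.

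Since you go on to sketch the underlying mechanism, two remarks. First, the bound $d(\Phi_t(\rho),\Phi_t(\rho'))\le Ce^{ct}d(\rho,\rho')$ you use is the generic Gronwall/Lipschitz estimate for a broken flow confined to a bounded region; it is not an output of Ikawa's billiard analysis, which is needed for the much finer decay of Proposition \propref{convL}. Second, and more substantively, in your treatment of \eqref{eq:distsupT} the assertion that forward trapping of $\Phi_s(\rho)$ over $[t,T]$ forces $\Phi_t(\rho)$ to lie within $Re^{-c(T-t)}$ of the ray $\mathcal{R}$ itself cannot be correct: hyperbolicity only controls the \emph{unstable} component of the displacement from $\mathcal{R}$, while the \emph{stable} component can be of the order of the radius of $D$ — points on the local stable manifold far from $\mathcal{R}$ are trapped for all forward time and so certainly belong to $\mathcal{T}_{T^\star}(D)$. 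Consequently $\Phi_t(\rho)$ need not be spatially away from $\partial D$, and the uniform lower bound on $d(\Phi_t(\rho),\Phi_t(\rho'))$ must be obtained in phase space (a point whose orbit stays in $D$ for time $\ge T^\star$ and a point whose orbit leaves $D$ at that very instant may be close in position, but then their directions differ by a definite amount), which is what the omitted argument in \cite{Schreodinger} actually supplies. Your handling of \eqref{eq:distsupD} is fine up to bookkeeping of the constant $C$ into the exponent, and the overall reduction to the cited reference matches the paper.
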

We say that $f\in L^{2}$ is microlocally supported in $U\subset T^{\star}\Omega$,
if for all $a\in C^{\infty}(T^{\star}\Omega)$ such that $a=1$ in
$U$ we have $\text{Op}(a)f=f$. Using the same time translations
as in \cite{Schreodinger} combined with the finite speed of propagation,
the following Proposition implies our main theorem:
\begin{prop}
\label{redT}There exists $\epsilon>0$ and a small neighborhood $D$
of the trapped ray, such that, for all $\chi\in C_{0}^{\infty}$ supported
in $D$, if $f$, $g$ are such that $\psi(-h^{2}\Delta)f=f$, $\psi(-h^{2}\Delta)g=g$,
are microlocally supported in $T_{\epsilon|\log h|}(D)$ and spatially
in $D$ and away from $\partial(\Theta_{1}\cup\Theta_{2})$, and $u$
is the solution of (\ref{eq:lw}) with data $(f,g)$, we have:
\[
\Vert\chi u\Vert_{L^{p}(0,\epsilon|\log h|)L^{q}}\lesssim\Vert f\Vert_{\dot{H}^{\gamma}}+\Vert g\Vert_{\dot{H}^{\gamma-1}}.
\]
\end{prop}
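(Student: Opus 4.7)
The plan is to adapt the roadmap of \cite{Schreodinger} to the wave equation by constructing an explicit approximate solution concentrated along the broken bicharacteristics near the trapped ray, proving the Strichartz estimate for this approximation, and then controlling the error by energy methods that exploit the trapped-set localization. The microlocal support hypothesis in $\mathcal{T}_{\epsilon|\log h|}(D)$ is precisely what guarantees that every relevant bicharacteristic remains inside $D$ over the whole interval $[0,\epsilon|\log h|]$, so we can hope to describe $u$ there by an Ikawa-type superposition of reflected waves.

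First I would decompose the initial data $(f,g)$ into Gaussian wave packets/coherent states of scale $h$ indexed by points of $\mathcal{T}_{\epsilon|\log h|}(D)$, using the microlocal support hypothesis and $\psi(-h^2\Delta)f=f$. For each such packet I would build, in the spirit of \cite{Ikawa2,IkawaMult,plaques}, an approximate solution as a finite sum
\[
u_{\mathrm{app}} \;=\; \sum_{N=0}^{N_\star} u_N, \qquad N_\star \sim \epsilon|\log h|,
\]
where $u_N$ is a WKB oscillatory integral representing the wave that has reflected $N$ times between $\Theta_1$ and $\Theta_2$. The phases solve the eikonal equation for the broken geodesic flow and the amplitudes solve transport equations along it. Strict convexity of the obstacles yields a uniform contraction factor $e^{-c_0}$ at each reflection (this is the hyperbolicity of the trapped orbit), so the sum remains $O(1)$ in $L^2$ even though it contains $\sim |\log h|$ terms. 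A notable subtlety, flagged in the introduction, is that in contrast with the Schr\"odinger case the phases of the $u_N$ stationate on whole lines (the axis direction of the trapped ray) rather than on isolated points, so the stationary phase computation must be handled one dimension at a time.

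Second, I would prove the Strichartz estimate directly for $u_{\mathrm{app}}$ on $[0,\epsilon|\log h|]$. By the $TT^\star$/Keel--Tao scheme this reduces to a dispersive $L^1\to L^\infty$ bound on each piece $u_N$. The degeneracy of the phase along the trapped direction is compensated by the fact that propagation along that direction is transverse, so up to the contracting Jacobian no additional loss in $h$ appears; summing in $N$ converges thanks to the factor $e^{-c_0 N}$ from the successive reflections, and yields an estimate independent of $|\log h|$.

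The main obstacle will be controlling the error $r := u - u_{\mathrm{app}}$ in the Strichartz norm over the logarithmic interval. By Duhamel,
\[
r(t) \;=\; \int_0^t \frac{\sin((t-s)\sqrt{-\Delta})}{\sqrt{-\Delta}}\,\bigl(\partial_t^2 - \Delta\bigr)u_{\mathrm{app}}(s)\,ds,
\]
and the source splits into two types of terms: WKB amplitude errors, which are $O(h^\infty)$ after sufficiently many transport iterates, and cut-off errors arising from the spatial/microlocal truncations that define each $u_N$. The crucial point is that each cut-off error is microlocally supported outside $\mathcal{T}_{\epsilon|\log h|}(D)$; by Lemma~\ref{lem:distsup}, it sits at distance at least $e^{-c\epsilon|\log h|} = h^{c\epsilon}$ from the microsupport of the data, so if $\epsilon$ is chosen small enough the remaining polynomial factors in $h^{-1}$ coming from Christ--Kiselev, from the $L^2$-smoothing estimate \eqref{eq:globalaway} applied to the non-trapping part, and from the number $\sim |\log h|$ of pieces, are all absorbed. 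Combined with the Strichartz estimate for $u_{\mathrm{app}}$, this gives the claimed bound.
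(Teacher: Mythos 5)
Your proposal captures the core of the paper's strategy: build an Ikawa-type parametrix as a sum of reflected WKB waves, exploit the hyperbolic contraction factor $\lambda^{|J|}$ at each reflection (Proposition~\ref{prop:convL}) to control the $\sim|\log h|$ terms, perform the stationary phase one angular direction at a time because the phases stationate along whole lines, and reduce the Strichartz estimate to a dispersive bound by $TT^{\star}$. Two discrepancies with the paper are worth noting.

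First, you propose decomposing the \emph{data} into wave packets and proving Strichartz packet by packet before summing. The paper instead applies $TT^{\star}$ \emph{first}, reducing the whole statement to the single kernel bound $|\operatorname{Op}(q_{\epsilon,h,N})^{\star}\cos(t\sqrt{-\Delta})\,\delta^{y}_{\epsilon,h,N}|\lesssim h^{-\frac{d+1}{2}}t^{-\frac{d-1}{2}}$, where $\delta^{y}_{\epsilon,h,N}$ is the (already wave-packet-like) kernel of $\operatorname{Op}(q_{\epsilon,h,N})$. This avoids the nontrivial issue of summing $O(h^{-d})$ packet contributions, which your proposal leaves implicit; one would need an almost-orthogonality argument there, whereas $TT^{\star}$ on the propagator dispenses with it entirely.

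Second, your error-control story is not the one used in the paper. You describe ``cut-off errors'' microlocally supported outside $\mathcal{T}_{\epsilon|\log h|}(D)$ and propose to absorb them via Lemma~\ref{lem:distsup}, Christ--Kiselev and the non-trapping smoothing estimate. In fact, Lemma~\ref{lem:distsup} is used \emph{before} the parametrix is built, to construct the symbol $\tilde{q}_{\epsilon,h}$ with derivative bounds $|\partial^{\alpha}\tilde{q}_{\epsilon,h}|\lesssim h^{-2|\alpha|c\epsilon}$, and Christ--Kiselev and the smoothing estimates belong to the earlier reduction to logarithmic times (Section~2). Within the proof of Proposition~\ref{redT} itself, the only remainder is the pure WKB truncation error $R_K$, which is bounded in $L^{\infty}$ via Sobolev embedding, Proposition~\ref{Decays}, and the temporal support counting $|\{J:w^{J}_{K-1}\neq0\}|\lesssim 1+t$; one then fixes $K$ large and $\epsilon$ small so that the factors $h^{K}$ and $h^{-c\epsilon(\cdot)}$ combine to the required power $h^{-\frac{d+1}{2}}e^{-t/\epsilon}$. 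Your mechanism may also lead somewhere, but it is genuinely different from, and less elementary than, what the paper does.

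Finally, a small point: the assertion that the degeneracy along the trapped direction is harmless ``because propagation along that direction is transverse'' glosses over the actual work. The paper restricts the phase to spheres $\{|\xi|=s\}$, uses Lemma~\ref{restrc_hess} to compute the Hessian of the restricted phase via the Lagrange multiplier, and verifies non-degeneracy and the sign of the critical point through Proposition~\ref{prop:non_deg_pha}. That computation is a genuine step, not an automatic consequence of transversality.
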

The rest of the paper is thus devoted to prove Proposition \ref{redT}.

\section{Construction of an approximate solution}

\subsection{The phase functions}

We recall here the definition of the phase functions we used in \cite{Schreodinger}
following the works of Iwaka \cite{Ikawa2,IkawaMult} and Burq \cite{plaques}.
We call $\varphi:\mathcal{U}\rightarrow\mathbb{R}$ a phase function
on the open set $\mathcal{U}\subset\mathbb{R}^{3}$ if $\varphi$
is $C^{\infty}$ on $\mathcal{U}$ and verifies $|\nabla\varphi|=1$.
We say that $\varphi$ verifies $(P)$ on $\partial\Theta_{p}$ if
\begin{enumerate}
\item The principal curvatures of the level surfaces of $\varphi$ with
respect to $-\nabla\varphi$ are non-negative in every point of $\mathcal{U}$,
\item We have, for $j\neq p$
\[
\Theta_{j}\subset\{y+\tau\nabla\varphi(x)\ \text{s.t.}\ \tau\geq0,y\in\mathcal{U}\cap\partial\Theta_{p},\nabla\varphi(y)\cdot n(y)\geq0\},
\]
\item For all $A\in\mathbb{R}$, the set $\{\varphi\leq A\}$ is empty or
convex.
\end{enumerate}
Let $\delta_{1}\geq0$ and $\varphi$ be a phase function. We set

\begin{align*}
\Gamma_{p}(\varphi) & =\{x\in\partial\Theta_{p}\ \text{s.t.}\ -n(x)\cdot\nabla\varphi(x)\geq\delta_{1}\},\\
\mathcal{U}_{p}(\varphi) & =\underset{X^{1}(x,\nabla\varphi(x))\in\Gamma_{p}(\varphi)}{\bigcup}\{X^{1}(x,\nabla\varphi(x))+\tau\Xi(x,\nabla\varphi(x)),\ \tau\geq0\}.
\end{align*}
Then, there exists $\delta_{1}\geq0$ such that, if $\varphi$ is
a phase function verifying $(P\text{)}$ on $\partial\Theta_{p}$,
we can define the phase $\varphi_{j}$ reflected on the obstacle $\Theta_{j}$
on the open set $\mathcal{U}_{j}(\varphi)$, verifying $(P)$ on $\partial\Theta_{j}$,
by the following relation, for $X^{1}(x,\nabla\varphi(x))\in\Gamma_{p}(\varphi)$:
\[
\varphi_{j}(X^{1}(x,\nabla\varphi)+\tau\Xi^{1}(x,\nabla\varphi))=\varphi(X^{1}(x,\nabla\varphi))+\tau.
\]

We call a finite sequence $J=(j_{1},\cdots,j_{n})$, $j_{i}\in\{1,2\text{\}}$
with $j_{i}\neq j_{i+1}$ a story of reflections, and will denote
$\mathcal{I}$ the set of all the stories of reflection. By induction,
we can define the phases $\varphi_{J}$ for any $J\in\mathcal{I}$,
on the sets $\mathcal{U}_{J}(\varphi)$.

For $f\in C^{\infty}(\mathcal{U})$ and $m\in\mathbb{N}$, let 
\[
|f|_{m}(\mathcal{U})=\max_{(a_{i})\in(\mathcal{S}^{2})^{m}}\sup_{\mathcal{U}}|(a_{1}\cdot\nabla)\cdots(a_{m}\cdot\nabla)f|.
\]
The following estimate due to \cite{Ikawa2,IkawaMult,plaques}:
\begin{prop}
\label{prop:contrder}For every $m\geq0$ we have 
\[
|\nabla\varphi_{J}|_{m}\leq C_{m}|\nabla\varphi|_{m}.
\]
\end{prop}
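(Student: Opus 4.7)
The plan is to argue by induction on the length $n=|J|$ of the reflection story, with the goal of obtaining the bound with a constant $C_m$ that is independent of $n$. The base case $n=0$ is trivial. For the inductive step, the key is the defining relation
\[
\varphi_{J}(X^{1}(y,\nabla\varphi_{J'}(y))+\tau\,\Xi^{1}(y,\nabla\varphi_{J'}(y)))=\varphi_{J'}(y)+\tau,
\]
where $J'$ is obtained from $J$ by removing the last reflection index $j_{n}$ and $y$ ranges over $\Gamma_{j_{n}}(\varphi_{J'})\subset\partial\Theta_{j_{n}}$. In words, $\varphi_{J}$ is constructed on the tube $\mathcal{U}_{J}$ by flowing along the reflected gradient field off $\partial\Theta_{j_{n}}$, and the bound amounts to an $m$-th order Faa-di-Bruno type estimate for this composition.

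First I would set up local coordinates adapted to $\partial\Theta_{j_{n}}$ and express the map $y\mapsto X^{1}(y,\nabla\varphi_{J'}(y))$ together with the reflected direction $\Xi^{1}(y,\nabla\varphi_{J'}(y))$ as a smooth function of $y$, using the fact that the reflection $\xi\mapsto\Xi^{1}(y,\xi)=\xi-2(n(y)\cdot\xi)n(y)$ has derivatives controlled by the (fixed) geometry of the compact smooth obstacle. The transversality assumption $-n\cdot\nabla\varphi_{J'}\geq\delta_{1}>0$ on $\Gamma_{j_{n}}(\varphi_{J'})$ guarantees that this change of variables is a diffeomorphism with derivatives up to order $m$ bounded in terms of $|\nabla\varphi_{J'}|_{m}$, and that the inverse map (given a point in $\mathcal{U}_{J}$, find the $(y,\tau)$) is also smooth with the same kind of control. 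Differentiating the defining relation $m$ times and inverting then gives an estimate of the form
\[
|\nabla\varphi_{J}|_{m}(\mathcal{U}_{J})\leq F_{m}\!\left(|\nabla\varphi_{J'}|_{0},\dots,|\nabla\varphi_{J'}|_{m}\right),
\]
where $F_{m}$ depends on the obstacles and on $\delta_{1}$ but not on $J$.

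The main obstacle is to turn this inductive bound into a uniform estimate independent of $n$, and this is where property $(P)$ is crucial. The first derivative $\nabla\varphi_{J}$ is automatic since $|\nabla\varphi_{J}|=1$. For second derivatives one uses the Riccati equation satisfied by the second fundamental form $S_{J}$ of the level surfaces of $\varphi_{J}$ along the flow of $\nabla\varphi_{J}$, namely $\partial_{\tau}S_{J}+S_{J}^{2}=0$, together with the jump relation at the reflection coming from the Gauss equation and the strict positivity of the second fundamental form of $\partial\Theta_{j_{n}}$. Positivity of $S_{J}$ (granted by $(P)$) makes the Riccati flow contracting: along each bicharacteristic of unit speed, $S_{J}$ decays like $O(1/\tau)$ after a reflection, which absorbs the bounded increase induced by the reflection and yields a uniform bound on $|\nabla\varphi_{J}|_{2}$.

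Higher order derivatives $|\nabla\varphi_{J}|_{m}$, $m\geq3$, satisfy linear transport equations along the flow whose right-hand sides are polynomials in lower order derivatives, with jumps at each reflection that are again polynomial in the jets of $\varphi_{J'}$ up to order $m$ and in the geometry of $\partial\Theta_{j_{n}}$. An induction on $m$, combined with the uniform bound at order two, then gives uniform bounds at every order. The delicate point that I would have to handle carefully is to check that at each differentiation step the transversality factor $-n\cdot\nabla\varphi_{J'}\geq\delta_{1}$ is not spoiled by the process, i.e. that $\delta_{1}$ can be chosen once and for all; this ultimately follows from Property $(P)$ and the strict convexity of the two obstacles, exactly as in \cite{Ikawa2,IkawaMult,plaques}, and is the reason the statement is quoted from those references rather than reproved here.
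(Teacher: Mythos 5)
The paper does not prove this proposition: it is quoted directly from Ikawa \cite{Ikawa2,IkawaMult} and Burq \cite{plaques}, so there is no in-paper argument to compare against. Your sketch is, however, a sound reconstruction of the argument in those references and correctly isolates the two mechanisms that make the constant $C_{m}$ independent of $|J|$. First, property $(P)$ guarantees that the second fundamental form $S_{J}$ of the level surfaces of $\varphi_{J}$ is non-negative, and along the unit-speed flow it obeys the matrix Riccati equation $\partial_{\tau}S_{J}+S_{J}^{2}=0$, whose solution $(S_{0}^{-1}+\tau\,\mathrm{Id})^{-1}\le\tau^{-1}\mathrm{Id}$ decays to a bound depending only on the distance travelled, irrespective of $S_{0}$. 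Since the distance between consecutive reflections in the exterior of two strictly convex obstacles is bounded below, this decay absorbs the bounded increase of $S$ produced at each reflection, giving a uniform bound on the second derivatives of $\varphi_{J}$. Second, for higher-order jets the linearization of the Riccati flow around the positive decaying solution $S_{J}$ is itself decaying, so the transport equations for the $m$-th order derivatives have contracting homogeneous parts with forcing polynomial in the already-controlled lower orders, and the induction on $m$ closes. That is the technical heart of the Ikawa--Burq construction, and you identify it correctly.

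Two minor remarks. The second fundamental form controls $|\nabla\varphi_{J}|_{1}$, not $|\nabla\varphi_{J}|_{2}$ as you write; a harmless indexing slip. And while you are right to flag stability of the transversality constant $\delta_{1}$, in the two-convex setting near the trapped ray this is built into the set-up (Proposition \ref{prop:opens} and the choice of $\mathcal{U}_{\infty}$, which keeps all relevant reflections close to normal incidence) and does not need to be re-verified at each order of differentiation. The genuine effort, which your sketch appropriately defers to the references, lies in making the ``contraction absorbs the jump'' heuristic quantitative at every jet level.
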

Moreover, according to \cite{plaques}:
\begin{prop}
\label{prop:opens}There exists $M>0$ such that, for each $(i,j)\in\{1,2\}^{2}$,
there exists open sets containing the trapped ray $\mathcal{U}_{i,j}$
such that, if $J=\{i,\cdots,j\}$ verifies $|J|\geq M$, and $\varphi$
verifies $(P)$, $\varphi_{J}$ can be defined in $\mathcal{U}_{i,j}$. 
\end{prop}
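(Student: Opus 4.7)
The plan is to exploit the hyperbolicity of the billiard flow near the trapped periodic orbit $\mathcal{R}$. Since the two obstacles are strictly convex, the broken geodesic flow restricted to a neighborhood of $\mathcal{R}$ admits this orbit as a hyperbolic closed geodesic: the associated Poincar\'e map is a linear contraction by a factor $\lambda\in(0,1)$ in the stable direction and an expansion by $\lambda^{-1}$ in the unstable one, with $\lambda$ depending only on the geometry of $\Theta_{1}\cup\Theta_{2}$. I would first reinterpret the construction as a flow on Lagrangian graphs: to $\varphi$ is attached $\Lambda_{\varphi}=\{(x,\nabla\varphi(x))\}$, and $\varphi_{J}$ corresponds to successively flowing and reflecting $\Lambda_{\varphi}$ according to the story $J$. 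The three clauses of property $(P)$ (non-negative principal curvatures, forward-cone inclusion of the opposite obstacle, convexity of sublevels) precisely ensure that $\Lambda_{\varphi}$ lies on the correct side of the stable manifold of $\mathcal{R}$, uniformly over all phases satisfying $(P)$, and that $(P)$ is preserved along the iteration.

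Next I would quantify the expansion step by step. By \propref{contrder}, all derivatives of $\nabla\varphi_{J}$ remain controlled a priori, so the iterates stay in a fixed precompact family of Lagrangians, and by the hyperbolic structure they converge at rate $e^{-c|J|}$ to the unstable manifold of $\mathcal{R}$ in a tube around the arc of the orbit emerging from the last reflection. Simultaneously, the transverse size of the ``lit'' subset $\Gamma_{j_{n}}(\varphi_{J'})$ (for $J'$ the truncation of $J$ at step $n$) is multiplied by $\lambda^{-1}$ at each reflection, and the direction $\nabla\varphi_{J'}$ aligns exponentially with the tangent to $\mathcal{R}$, so that the threshold condition $-n\cdot\nabla\varphi_{J'}\geq\delta_{1}$ automatically holds at every point of a uniform neighborhood of $\mathcal{R}\cap\partial\Theta_{j_{n}}$ for $n$ sufficiently large.

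Choosing $M$ so large that after $M$ reflections the expanded lit set covers a fixed tubular neighborhood of the arc of $\mathcal{R}$ between $\Theta_{i}$ and $\Theta_{j}$, I would take $\mathcal{U}_{i,j}$ to be this neighborhood; the construction then yields $\mathcal{U}_{i,j}\subset\mathcal{U}_{J}(\varphi)$ for every $\varphi$ verifying $(P)$ and every sequence $J=(i,\ldots,j)$ with $|J|\geq M$. The main obstacle is to make the hyperbolic expansion estimate uniform over the infinite-dimensional family of admissible initial phases. This uniformity should come from combining the compactness of the set of admissible Lagrangians crossing a fixed cross-section of $\mathcal{R}$ with the quantitative hyperbolicity of the Poincar\'e map; here the rigidity of $(P)$ is essential, since its invariance under reflection confines every iterate to the same precompact family, so that a single contraction rate $\lambda$ can be chosen to work for all of them.
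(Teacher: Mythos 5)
The paper does not actually prove this proposition: it is quoted verbatim from Burq's work \cite{plaques}, which in turn builds on Ikawa's constructions in \cite{Ikawa2,IkawaMult}. So there is no in-paper argument to compare your proposal against, and what matters is whether your sketch could in principle replace the citation.

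Your sketch identifies the correct geometric mechanism. The hyperbolicity of the billiard flow near $\mathcal{R}$ forces the normals $\nabla\varphi_{J'}$ of deeply reflected phases to align exponentially fast with the axis of the trapped ray, so that the illumination condition $-n\cdot\nabla\varphi_{J'}\geq\delta_{1}$ holds on a disc of \emph{fixed} size around $\mathcal{R}\cap\partial\Theta_{j_{n}}$ once $|J'|$ is large, and consequently the cone of outgoing rays contains a fixed tube $\mathcal{U}_{i,j}$. This is exactly the content of the Ikawa--Burq result, and the reinterpretation via Lagrangian graphs converging to the unstable manifold is a legitimate dynamical rephrasing of it.

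Two caveats, however. First, the claim that the transverse size of $\Gamma_{j_{n}}(\varphi_{J'})$ is ``multiplied by $\lambda^{-1}$ at each reflection'' is not correct as stated: the lit set does not keep expanding indefinitely, it \emph{saturates} to a fixed cap determined by $\delta_{1}$ once the normals have aligned sufficiently; what matters is only that it reaches this fixed size after finitely many steps. Second, and more importantly, you flag the uniformity of the convergence over all phases $\varphi$ satisfying $(P)$ as ``the main obstacle'' and propose to resolve it by compactness of the family of admissible Lagrangians. This is the actual technical heart of the cited result, and a soft compactness argument does not obviously close it: one needs quantitative control. In Ikawa and Burq this is done by tracking the second fundamental form of the level surfaces of $\varphi_{J}$ and showing, using the strict convexity of the obstacles together with property (1) of $(P)$ and the a priori bounds of Proposition~\propref{contrder}, that the principal curvatures become uniformly large (bounded below independently of $\varphi$) after a bounded number of reflections; this simultaneously yields the exponential alignment, the stabilization of the illuminated set, and the preservation of $(P)$ along the iteration. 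As written, your sketch asserts rather than establishes this uniform quantitative hyperbolicity, so it would not yet stand as a self-contained replacement for the citation.
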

We set
\[
\hat{\mathcal{U}}_{\infty}=\mathcal{U}_{11}\cap\mathcal{U}_{12}\cap\mathcal{U}_{21}\cap\mathcal{U}_{22},
\]
and $\mathcal{U}_{\infty}\subset\hat{\mathcal{U}}_{\infty}$ to be
an open cylinder having for axis the periodic trajectory and contained
in $\hat{\mathcal{U}}_{\infty}$. It will be shrinked in the sequel
if necessary. Finally, we recall the following estimate concerning
the derivatives with respect to $\xi$ of the phases builded begining
with $\varphi=(x-y)\cdot\frac{\xi}{|\xi|}$ we obtained in \cite{Schreodinger}:
\begin{prop}
\label{prop:decder}Let $\varphi(x)=(x-y)\cdot\frac{\xi}{|\xi|}$.
We denote $\varphi_{J}(x,\xi)$ the reflected phase we build begining
with $\varphi$. Then, for all multi-indices $\alpha,\beta$ there
exists a constant $D_{\alpha,\beta}>0$ such that the following estimate
holds on $\mathcal{U}_{\infty}$:
\[
|D_{\xi}^{\alpha}D_{x}^{\beta}\nabla\varphi_{J}|\leq D_{\alpha,\beta}^{|J|}.
\]
\end{prop}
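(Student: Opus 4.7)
The plan is to proceed by induction on the length $|J|$ of the reflection story, combined with an inner induction on the order $|\alpha|+|\beta|$ of the derivative. The starting point is the implicit characterisation of the reflected phase: if $\varphi_{J,j}$ is obtained by reflecting $\varphi_J$ on $\partial\Theta_j$, then
\[
\nabla\varphi_{J,j}(z,\xi) = R_j\bigl(X(z,\xi),\nabla\varphi_J(X(z,\xi),\xi)\bigr),
\]
where $R_j(X,\eta)=\eta-2(\eta\cdot n(X))n(X)$ is the specular reflection at $\partial\Theta_j$, and the impact point $X$ together with the free flight time $\tau$ are determined implicitly by $z=X+\tau R_j(X,\nabla\varphi_J(X,\xi))$, with $X\in\partial\Theta_j$ and $\tau\geq0$. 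The base case $\varphi(x,\xi)=(x-y)\cdot\xi/|\xi|$ is immediate since $\nabla\varphi=\xi/|\xi|$ is independent of $x$ and homogeneous of degree zero in $\xi$, so all its $\xi$-derivatives are uniformly bounded on compact sets of $\xi\neq0$.

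For the inductive step in $|J|$ with $|\alpha|+|\beta|=1$, I would differentiate the implicit equation in $\xi$ and solve the resulting linear system via the inverse function theorem: the ray parameterization $(X,\tau)\mapsto z$ is a diffeomorphism on $\mathcal{U}_\infty$ with uniformly bounded inverse Jacobian, thanks to the strict convexity of the obstacles and to the fact that the flight times $\tau$ stay in a fixed compact interval along a neighborhood of the periodic orbit. This yields $|D_\xi X|+|D_\xi \tau|\leq C\,|D_\xi \nabla\varphi_J|$, the constant $C$ depending only on the obstacle geometry and on $|\nabla\varphi_J|_1$, itself bounded uniformly in $|J|$ by Proposition~\propref{contrder}. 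Substituting back into the reflection formula, and again using the uniform control of $|\nabla\varphi_J|_1$, one obtains the multiplicative recursion
\[
|D_\xi\nabla\varphi_{J,j}|\leq D\,|D_\xi\nabla\varphi_J|,
\]
which iterates to $|D_\xi\nabla\varphi_J|\leq D^{|J|}$. The pure $x$-derivatives are already covered by Proposition~\propref{contrder}, and mixed order-one derivatives are handled similarly.

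For higher order $|\alpha|+|\beta|$, applying the Fa\`a di Bruno formula to the composition $\nabla\varphi_{J,j}=R_j\circ(X,\nabla\varphi_J\circ X)$ expresses $D_\xi^\alpha D_x^\beta\nabla\varphi_{J,j}$ as a polynomial combination of derivatives of $R_j$ and $n$, derivatives of $(X,\tau)$ with respect to $(z,\xi)$, and derivatives $D_\xi^{\alpha'} D_x^{\beta'}\nabla\varphi_J$ at $X$. The derivatives of $R_j$ and $n$ are bounded by the geometry, the pure $x$-derivatives of $\nabla\varphi_J$ by Proposition~\propref{contrder}, and lower-order mixed derivatives by the inner induction, each contributing at most $D_{\alpha',\beta'}^{|J|}$. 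Differentiating the implicit equation for $(X,\tau)$ the appropriate number of times in $\xi$ gives an analogous recursion for the top-order term, of the form $|D_\xi^\alpha D_x^\beta\nabla\varphi_{J,j}|\leq K_{\alpha,\beta}|D_\xi^\alpha D_x^\beta\nabla\varphi_J|+K_{\alpha,\beta}$, where $K_{\alpha,\beta}$ is built from the lower-order bounds. Iterating in $|J|$ then closes the induction with $D_{\alpha,\beta}$ chosen large enough to dominate all the $K$'s.

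The main obstacle is to secure genuinely uniform constants throughout: the inverse Jacobian of $(X,\tau)\mapsto z$, the flight times $\tau$, and the quantities $|\nabla\varphi_J|_m$ must all stay bounded independently of the reflection story. This is precisely where one has to choose $\mathcal{U}_\infty$ as a sufficiently narrow cylindrical neighborhood of the periodic orbit: only there do the impact angles stay uniformly transverse and the iterated phases $\varphi_J$ remain defined with uniformly tame geometry, a feature already exploited in Propositions~\propref{contrder} and~\propref{opens}.
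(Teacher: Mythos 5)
This proposition is not proved in the present paper: it is quoted from \cite{Schreodinger}, so there is no in-text argument to compare against. Assessed on its own merits, your reconstruction is sound and is almost certainly the intended argument. The reflection law at the impact point $X$, the implicit function theorem to resolve $(X,\tau)$ from $(z,\xi)$ with an inverse Jacobian uniformly bounded thanks to strict convexity, confined flight times, and uniformly transverse impact angles in the cylinder $\mathcal{U}_{\infty}$, the use of Proposition~\propref{contrder} to keep $|\nabla\varphi_{J}|_{m}$ (hence the per-reflection constant) independent of $|J|$, and the iteration of a per-reflection recursion with Fa\`a di Bruno for higher orders --- this is exactly what is needed, and the per-reflection multiplicative estimate $|D_{\xi}\nabla\varphi_{J,j}|\lesssim|D_{\xi}\nabla\varphi_{J}|$ with a $J$-independent constant is the heart of the matter.

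Two points of bookkeeping should be made explicit. First, the higher-order recursion $|D_{\xi}^{\alpha}D_{x}^{\beta}\nabla\varphi_{J,j}|\leq K_{\alpha,\beta}|D_{\xi}^{\alpha}D_{x}^{\beta}\nabla\varphi_{J}|+K_{\alpha,\beta}$ cannot hold with $K_{\alpha,\beta}$ a genuine constant: the inhomogeneous part is a polynomial of degree up to $|\alpha|+|\beta|$ in lower-order mixed derivatives of $\nabla\varphi_{J}$ and of $X(\cdot,\cdot)$, each of which already grows like $D_{\alpha',\beta'}^{|J|}$ by the inner inductive hypothesis, so the ``additive constant'' is really of size $\tilde{D}^{|J|}$ for some $\tilde{D}$ built from the lower-order constants. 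The affine recursion $a_{n+1}\leq Ca_{n}+C'\tilde{D}^{n}$ then gives $a_{n}\lesssim n\,\max(C,\tilde{D})^{n}$, which is still absorbed into a single exponential by enlarging the base; this exponential growth of the inhomogeneity is exactly why $D_{\alpha,\beta}$ must be allowed to depend on $(\alpha,\beta)$ and should not be concealed in a $J$-independent $K$. Second, ``mixed order-one derivatives'' is a slip: at $|\alpha|+|\beta|=1$ a derivative is purely in $x$ or purely in $\xi$; you presumably mean the order-two cross term $\partial_{\xi}\partial_{x}$, which the same Fa\`a di Bruno scheme does handle.
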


\subsection{The microlocal cut-off}

According to Section 2, we are reduced to show Proposition \ref{redT}.
By \lemref{distsup}, we can construct a small shrinking of $\mathcal{U}_{\infty}$,
$\tilde{\mathcal{U}}_{\infty}\subset\mathcal{U}_{\infty}$ , and $\tilde{q}_{\epsilon,h}\in C^{\infty}(T^{\star}\Omega)$
such that $\tilde{q}_{\epsilon,h}=1$ in an open neighborhood of $\mathcal{T}_{2\epsilon|\log h|}(\mathcal{\tilde{U}}_{\infty})$,
$\tilde{q}_{\epsilon,h}=0$ outside $\mathcal{T}_{2\epsilon|\log h|}(\mathcal{U}_{\infty})$
in such a way that, for all multi-indice $\alpha$,
\begin{equation}
|\partial_{\alpha}\tilde{q}_{\epsilon,h}|\lesssim h^{-2|\alpha|c\epsilon}.\label{eq:contrq}
\end{equation}
It suffices to show Strichartz estimates in time $\epsilon|\log h|$
for data microlocally supported in $\mathcal{T}_{\epsilon|\log h|}(\mathcal{\tilde{U}}_{\infty})$
and spatially supported in $\mathcal{\tilde{U}}_{\infty}$ and away
from a small neighborhood $\mathcal{V}$ of $\partial\left(\Theta_{1}\cup\Theta_{2}\right)$.
Let $\chi_{0}\in C^{\infty}$ such that $\chi_{0}=0$ near $\partial\left(\Theta_{1}\cup\Theta_{2}\right)$
and $\chi_{0}=1$ outside $\mathcal{V}$. For such functions, $\chi_{0}\text{Op}(\tilde{q}_{\epsilon,h})f=f$,
thus it suffices to show
\begin{gather}
\Vert\chi e^{-it\sqrt{-\Delta}}\chi_{0}\text{Op}(\tilde{q}_{\epsilon,h})f\Vert_{L^{p}(0,\epsilon|\log h|)L^{q}}\lesssim\Vert f\Vert_{\dot{H}^{\gamma}},\label{eq:mlco2-1}
\end{gather}
for all $\chi\in C^{\infty}$ supported in $\mathcal{\tilde{U}}_{\infty}$.
We will show the strongest estimate:
\begin{equation}
\Vert e^{-it\sqrt{-\Delta}}\chi_{0}\text{Op}(\tilde{q}_{\epsilon,h})f\Vert_{L^{p}(0,\epsilon|\log h|)L^{q}}\lesssim\Vert f\Vert_{\dot{H}^{\gamma}}.\label{eq:mlco2}
\end{equation}
by the $TT^{\star}$ method - see for example \cite{KeelTao} - it
suffise to show the dispersive estimate, for $0\leq t\leq\epsilon|\log h|$:
\begin{equation}
\Vert Q_{\epsilon,h}^{\star}e^{-it\sqrt{-\Delta}}Q_{\epsilon,h}\Vert_{L^{1}\rightarrow L^{\infty}}\lesssim\frac{1}{h^{\frac{d+1}{2}}t^{\frac{d-1}{2}}}.\label{eq:disp-1}
\end{equation}
where
\[
Q_{\epsilon,h}:=\psi(-h^{2}\Delta)\chi_{0}\text{Op}(\tilde{q}_{\epsilon,h}).
\]

Then, to show (\ref{eq:disp-1}), it suffises to show
\begin{equation}
\Vert\text{Op}(q_{\epsilon,h,N})^{\star}e^{-it\sqrt{-\Delta}}\text{Op}(q_{\epsilon,h,N})\Vert_{L^{1}\rightarrow L^{\infty}}\lesssim\frac{1}{h^{\frac{d+1}{2}}t^{\frac{d-1}{2}}}.\label{eq:dispder}
\end{equation}
for $N$ large enough. Note that, in particular, 
\begin{equation}
\text{Supp}q_{\epsilon,h,N}\subset\mathcal{T}_{2\epsilon|\log h|}(\mathcal{U}_{\infty})\cap\mathcal{U}_{\infty}\times\{|\xi|\in[\alpha_{0},\beta_{0}]\}\label{eq:suppq}
\end{equation}
and $q_{\epsilon,h,N}$ is spatially supported outside a small neighborhood
of $\partial\left(\Theta_{1}\cup\Theta_{2}\right)$ not depending
of $\epsilon,h,N$.

It suffices to obtain
\begin{gather*}
\Vert\text{Op}(q_{\epsilon,h,N})^{\star}\cos(t\sqrt{-\Delta})\text{Op}(q_{\epsilon,h,N})\Vert_{L^{1}\rightarrow L^{\infty}}\lesssim\frac{1}{h^{\frac{d+1}{2}}t^{\frac{d-1}{2}}},\\
\Vert\text{Op}(q_{\epsilon,h,N})^{\star}\sin(t\sqrt{-\Delta})\text{Op}(q_{\epsilon,h,N})\Vert_{L^{1}\rightarrow L^{\infty}}\lesssim\frac{1}{h^{\frac{d+1}{2}}t^{\frac{d-1}{2}}}.
\end{gather*}
We will deal for example with the cosinus part, the sinus is handled
in the same way. We set

\[
\delta_{\epsilon,h,N}^{y}(x)=\frac{1}{(2\pi h)^{d}}\int e^{-i(x-y)\cdot\xi/h}q_{\epsilon,T,N}(x,\xi)d\xi,
\]
in order to have, for $u\in L^{2}$
\[
\left(\text{Op}(q_{\epsilon,h,N})u\right)(x)=\int\delta_{\epsilon,h,N}^{y}(x)u(y)dy.
\]
Notice that
\[
\text{Op}(q_{\epsilon,h,N})^{\star}\cos(t\sqrt{-\Delta})\text{Op}(q_{\epsilon,h,N})u(x)=\int\text{Op}(q_{\epsilon,h,N})^{\star}\cos(t\sqrt{-\Delta})\delta_{\epsilon,T,N}^{y}(x)u(y)dy,
\]
thus, to show (\ref{eq:dispder}), it suffises to study $\delta_{\epsilon,h,N}^{y}$
and to show that, for $N$ large enough
\[
|\text{Op}(q_{\epsilon,h,N})^{\star}\cos(t\sqrt{-\Delta})\delta_{\epsilon,h,N}^{y}|\lesssim\frac{1}{h^{\frac{d+1}{2}}t^{\frac{d-1}{2}}},\ \text{for }0\leq t\leq\epsilon|\log h|.
\]
Let $\mathcal{V}_{1}$ be a small neighborhood of $\partial(\Theta_{1}\cup\Theta_{2})$
on wich $q_{\epsilon,h,N}$ is vanishing and $\chi_{0}\in C_{0}^{\infty}(\mathbb{R}^{n})$
be such that $\chi_{0}=1$ on $\mathcal{U}_{\infty}\cap\mathcal{V}_{1}^{c}$.
We choose $\chi_{+}$ to be supported on $\text{Conv}(\Theta_{1}\cup\Theta_{2})\backslash(\Theta_{1}\cup\Theta_{2})$
and away from a small enough neighborhood of $\partial(\Theta_{1}\cup\Theta_{2})$,
$\text{Conv}$ denoting the convex hull. Note that in particular,
$\text{Op}(q_{\epsilon,h,N})^{\star}=\text{Op}(q_{\epsilon,h,N})^{\star}\chi_{+}$.
The symbol of $\text{Op}(q_{\epsilon,T,N})^{\star}$ enjoys the development
\[
q_{\epsilon,h,N}{}^{\star}(x,\xi)=e^{ih\langle D_{x},D_{\xi}\rangle}q_{\epsilon,h,N}.
\]
Thus, by (\ref{eq:contrq}), taking $\epsilon>0$ small enough, we
have $|q_{\epsilon,T,N}^{\star(\alpha)}|\lesssim1$ for all $|\alpha|\leq n+1=4$.
Moreover, $q_{\epsilon,T,N}^{\star(\alpha)}$ is compactly supported
in frequencies. Therefore, by \cite{boundedlp}, Section 4, $\text{Op}(q_{\epsilon,T,N})$
is bounded on $L^{\infty}\rightarrow L^{\infty}$ independently of
$h$. Therefore, we only have to show, for all $0\leq T\leq\epsilon|\log h|$
\begin{equation}
|\chi_{0}\cos(t\sqrt{-\Delta})\delta_{\epsilon,h,N}^{y}|\lesssim\frac{1}{h^{\frac{d+1}{2}}t^{\frac{d-1}{2}}},\ \text{for }0\leq t\leq\epsilon|\log h|\label{eq:ultimred}
\end{equation}
for $N$ large enough. 

In order to do so, we will construct a parametrix in time $0\leq t\leq\epsilon|\log h|$
for the wave equation with data $(\delta_{\epsilon,h,N}^{y},0)$.
The first step will be to construct an approximate solution of wave
equation with data 
\[
(e^{-i(x-y)\cdot\xi/h}q_{\epsilon,h,N}(x,\xi),0)
\]
where $\xi\in\mathbb{R}^{n},\xi\in\text{Supp}q_{\epsilon,h,N}$ is
fixed and considered as a parameter. 

\subsection{Approximation of the solution}

\subsubsection{The Neumann sum}

We look for the solution $w$ of

\[
\begin{cases}
\partial_{t}^{2}w-\Delta w & =0\ \text{in }\Omega\\
w(t=0)(x) & =e^{-i(x-y)\cdot\xi/h}q(x,\xi)\\
\partial_{t}w(t=0)(x) & =0\\
w_{|\partial\Omega} & =0
\end{cases}
\]
as the Neumann serie \emph{
\[
w=\sum_{J\in\mathcal{I}}(-1)^{|J|}w^{J}
\]
}where
\begin{equation}
\begin{cases}
\partial_{t}^{2}w^{\emptyset}-\Delta w^{\emptyset} & =0\ \text{in }\mathbb{R}^{n}\\
w(t=0)(x) & =e^{-i(x-y)\cdot\xi/h}q(x,\xi)\\
\partial_{t}w(t=0)(x) & =0
\end{cases}\label{eq:wempt}
\end{equation}
 and, for $J\neq\emptyset$, $J=(j_{1},\cdots,j_{n})$, $J'=(j_{1},\cdots,j_{n-1})$
\begin{equation}
\begin{cases}
\partial_{t}^{2}w^{J}-\Delta w^{J} & =0\ \text{in }\mathbb{R}^{n}\backslash\Theta_{j_{n}}\\
w(t=0),\partial_{t}w(t=0) & =0\\
w_{|\partial\Theta_{j_{n}}}^{J} & =w_{|\partial\Theta_{j_{n}}}^{J'}.
\end{cases}\label{eq:wJ}
\end{equation}
Let us denote
\[
\varphi_{J}^{+}(x,\xi)=\varphi_{J}(x,\xi)\text{ and }\varphi_{J}^{-}(x,\xi)=\varphi_{J}(x,-\xi),
\]
that is, $\varphi_{J}^{+}$ is the reflected phase constructed with
$\varphi(x,\xi)=(x-y)\cdot\frac{\xi}{|\xi|}$ and $\varphi_{J}^{+}$
is the reflected phase constructed with $\varphi(x,-\xi)=-(x-y)\cdot\frac{\xi}{|\xi|}$.
We look for $w^{J}$ as the sum of the two series

\begin{gather*}
w^{J}=w^{J,+}+w^{J,-}\\
=\sum_{k\geq0}w_{k}^{J,+}(x,t)e^{-i(\varphi_{J}^{+}|\xi|-t|\xi|)/h}(-i\frac{h}{|\xi|})^{k}+\sum_{k\geq0}w_{k}^{J,-}(x,t)e^{i(\varphi_{J}^{-}|\xi|-t|\xi|)/h}(i\frac{h}{|\xi|})^{k}.
\end{gather*}
If $w_{0}^{J,\pm}$ solves the transport equations
\begin{gather*}
(2\partial_{t}+\nabla\varphi_{J}^{\pm}\cdot\nabla+\Delta\varphi_{J})w_{0}^{J,\pm}=0,
\end{gather*}
and for $k\geq1$
\begin{gather*}
(2\partial_{t}+\nabla\varphi_{J}^{\pm}\cdot\nabla+\Delta\varphi_{J})w_{k}^{J,\pm}=-\Box w_{k-1}^{J,\pm},
\end{gather*}
with, for $J=\emptyset$
\begin{gather*}
w_{0}^{\emptyset,+}(x,0)=w_{0}^{\emptyset,-}(x,0)=\frac{1}{2}q(x,\xi),\\
w_{k}^{\emptyset,\pm}(x,0)=0,\ \forall k\geq1,\\
\partial_{t}w_{k}^{\emptyset,+}(x,0)+\partial_{t}w_{k}^{\emptyset,-}(x,0)=0,\ \forall k\geq0,
\end{gather*}
and, for $|J|\geq1$
\begin{gather*}
w_{_{k}|\partial\Theta_{j_{n}}}^{J,\pm}=w_{_{k}|\partial\Theta_{j_{n}}}^{J',\pm},\\
\partial_{t}w_{k}^{J,\pm}(x,0)=w_{k}^{J,\pm}(x,0)=0,
\end{gather*}
then $w^{J}$ solves (\ref{eq:wempt}), (\ref{eq:wJ}).

Solving the transport equations for $J=\emptyset$ gives immediatly
\begin{align*}
w_{0}^{\emptyset,+} & =\frac{1}{2}q(x-t\frac{\xi}{|\xi|},\xi),\\
w_{0}^{\emptyset,-} & =\frac{1}{2}q(x+t\frac{\xi}{|\xi|},\xi),\\
w_{k}^{\emptyset,\pm} & =-\int_{0}^{t}\Box w_{k-1}^{\pm}(x\mp(s-t)\frac{\xi}{|\xi|},s)ds\quad k\geq1.
\end{align*}

\subsubsection{Reflections on the obstacles}

Now, we would like to reflect $w^{\emptyset,\pm}$ on the obstacle.
To this purpose, starting from the phases $\varphi(x,\xi)=\frac{(x-y)\cdot\xi}{|\xi|}$
and $\varphi(x,-\xi)=-\frac{(x-y)\cdot\xi}{|\xi|}$ we would like
to define the reflected phases as explained in the first subsection.

We decompose the set of the stories of reflections as 
\[
\mathcal{I}=\mathcal{I}_{1}\cup\mathcal{I}_{2}
\]
where $\mathcal{I}_{1}$ are all stories begining with a reflection
on $\Theta_{1}$, that is of the form $(1,\cdots)$, and $\mathcal{I}_{2}$
begining with a reflection on $\Theta_{2}$, that is of the form $(2,\cdots)$.
Let $e$ be a unit vector with the same direction as $\mathcal{R}$.
We take $e$ oriented from $\Theta_{1}$ to $\Theta_{2}$. For $\frac{\xi}{|\xi|}$
in a small enough neighborhood $V$ of $\{e,-e\}$ we have
\begin{enumerate}
\item if $\xi\cdot e>0$, then $\frac{(x-y)\cdot\xi}{|\xi|}$ verifies $(P)$
on $\Theta_{1}$ and $-\frac{(x-y)\cdot\xi}{|\xi|}$ verifies $(P)$
on $\Theta_{2}$ 
\item if $\xi\cdot e<0$, then $\frac{(x-y)\cdot\xi}{|\xi|}$ verifies $(P)$
on $\Theta_{2}$ and $-\frac{(x-y)\cdot\xi}{|\xi|}$ verifies $(P)$
on $\Theta_{1}$ 
\end{enumerate}
Remark that 
\begin{itemize}
\item In situation $(1)$, the support of $w^{\emptyset,+}$ never cross
$\Theta_{1}$ and the support of $w^{\emptyset,-}$ never cross $\Theta_{2}$
in any time, 
\item in situation $(2)$, the support of $w^{\emptyset,+}$ never cross
$\Theta_{2}$ and the support of $w^{\emptyset,-}$ never cross $\Theta_{1}$
in any time. 
\end{itemize}
We set
\begin{itemize}
\item In situation $(1)$: $(\mathcal{I}_{+},\mathcal{I}_{-}):=(\mathcal{I}_{2},\mathcal{I}_{1})$,
\item in situation $(2)$: $(\mathcal{I}_{+},\mathcal{I}_{-}):=(\mathcal{I}_{1},\mathcal{I}_{2})$.
\end{itemize}
Then, (\ref{eq:wJ}) is satisfied for $w^{\pm}$ for all $J\in\mathcal{I}_{\mp}$:
indeed, because the support of $w^{\emptyset,\pm}$ never cross $\Theta_{i_{\mp}}$,
we have for all time $0=w_{|\partial\Theta_{i_{\mp}}}^{\emptyset,\pm}=w_{|\partial\Theta_{i_{\mp}}}^{\left\{ i_{\mp}\right\} }$,
and so on. Thus, we are reduced to construct the $w^{J,\pm}$'s for
$J\in\mathcal{I}_{\pm}$. In the same way as in \cite{Schreodinger},
schrinking $\mathcal{U}_{\infty}$ if necessary, all the phases we
we will be dealing with are well defined according to the previous
remarks.

Then, in the exact same way as \cite{Schreodinger}, we solve the
transport equations along the rays:
\begin{prop}
\label{prop:solref}We denote by $\hat{X}_{-t}(x,\nabla\varphi_{J}^{\pm})$
the backward spatial component of the flow starting from $(x,\nabla\varphi_{J}^{\pm})$,
defined in the same way as $X_{-t}(x,\nabla\varphi_{J}^{\pm})$, at
the difference that we ignore the first obstacle encountered if it's
not $\Theta_{j_{n}},$ and we ignore the obstacles after $|J|$ reflections.
Moreover, for $J=(j_{1}=i_{1},\dots,j_{n})\in\mathcal{I}_{\pm}$,
denote by
\[
J(x,t,\xi)=\begin{cases}
(j_{1},\cdots,j_{k}) & \text{if }\hat{X}_{-t}(x,\nabla\varphi_{J}^{\pm})\text{ has been reflected \ensuremath{n-k} times,}\\
\emptyset & \text{if }\hat{X}_{-t}(x,\nabla\varphi_{J}^{\pm})\text{ has been reflected \ensuremath{n} times}.
\end{cases}
\]
 Then, the $w_{k}^{J}$'s are given by, for $t\geq0$ and $x\in\mathcal{U}_{J}(\varphi)$
\[
w_{0}^{J,\pm}(x,t)=\Lambda\varphi_{J}^{\pm}(x,\xi)q(\hat{X}_{-t}(x,\nabla\varphi_{J}^{\pm}),\xi)
\]
where
\[
\Lambda\varphi_{J}^{\pm}(x,\xi)=\frac{G\varphi_{J}^{\pm}(x)}{G\varphi_{J}^{\pm}(X^{-1}(x,\nabla\varphi_{J}^{\pm}))}\times\cdots\times\frac{G\varphi^{\pm}(X^{-|J|-1}(x,\nabla\varphi_{J}^{\pm}))}{G\varphi^{\pm}(X^{-|J|}(x,\nabla\varphi_{J}^{\pm}))},
\]
and, for $k\geq1$, and $x\in\mathcal{U}_{J}(\varphi)$
\[
w_{k}^{J,\pm}(x,t)=\int_{0}^{t}g_{\varphi_{J}}(x,(t-s),\xi)\Box w_{k-1}^{J(x,\xi,\pm(t-s))}(\hat{X}_{-(t-s)}(x,\nabla\varphi_{J}^{\pm}),s)ds
\]
where

\[
g_{\varphi_{J}}^{\pm}(x,\xi,t)=\frac{G\varphi_{J}^{\pm}(x)}{G\varphi_{J}^{\pm}(X^{-1}(x,\nabla\varphi_{J}^{\pm}))}\times\cdots\times\frac{G\varphi_{J(x,t,\xi)}^{\pm}(X^{-|J(x,t,\xi)|-1)}(x,\nabla\varphi_{J}^{\pm}))}{G\varphi_{J(x,t,\xi)}^{\pm}(\hat{X}_{-t}(x,\nabla\varphi_{J}^{\pm}))}.
\]
\end{prop}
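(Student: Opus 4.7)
The plan is to solve the first-order transport equations by the method of characteristics, with the geometry of broken rays encoding the reflections. For $k=0$, the characteristics of $2\partial_t+\nabla\varphi_J^\pm\cdot\nabla+\Delta\varphi_J^\pm$ are the curves $\tau\mapsto (X_\tau(x,\nabla\varphi_J^\pm),t+\tau)$; along such a curve the PDE reduces to a scalar ODE whose integrating factor is precisely $G\varphi_J^\pm$, recognized as the Jacobian that governs the spreading of the ray tube between two level surfaces of $\varphi_J^\pm$. Integrating on one segment of the broken ray produces the ratio $G\varphi_J^\pm(x)/G\varphi_J^\pm(X^{-1}(x,\nabla\varphi_J^\pm))$; then iterating the boundary-matching condition $w_{k|\partial\Theta_{j_n}}^{J,\pm}=w_{k|\partial\Theta_{j_n}}^{J',\pm}$ back through each of the $|J|$ reflections concatenates these ratios into the product $\Lambda\varphi_J^\pm$ and finally ties the solution to the initial datum via the factor $q(\hat{X}_{-t}(x,\nabla\varphi_J^\pm),\xi)$ coming from $w_0^{\emptyset,\pm}(\cdot,0)$.

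For $k\geq 1$ the transport operator is the same but is now driven by the source $-\Box w_{k-1}^{J,\pm}$. Duhamel's principle along the characteristics yields
\[
w_k^{J,\pm}(x,t)=\int_0^t g_{\varphi_J}^\pm(x,(t-s),\xi)\,\Box w_{k-1}^{J(x,\xi,\pm(t-s))}\!\bigl(\hat{X}_{-(t-s)}(x,\nabla\varphi_J^\pm),s\bigr)\,ds,
\]
where $g_{\varphi_J}^\pm$ is the truncation of $\Lambda\varphi_J^\pm$ to the segments of the broken ray actually travelled between times $s$ and $t$. The crucial point is that after travelling backwards for $t-s$ units of time, the ray has performed only some prefix of the reflections listed in $J$, and the source must therefore be evaluated against the $w_{k-1}$ associated to the corresponding \emph{shorter} story of reflections; this is exactly what the index $J(x,\xi,\pm(t-s))$ records, and what dictates the switch of phase function from $\varphi_J^\pm$ to $\varphi_{J(x,\xi,\pm(t-s))}^\pm$ inside $g_{\varphi_J}^\pm$.

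The main obstacle is not the integration itself, which is routine once the characteristics have been identified, but the bookkeeping required to make the formulas globally consistent on $\mathcal{U}_J(\varphi)$. One must verify that the modified backward flow $\hat{X}_{-t}$ (which ignores the first obstacle if it is not $\Theta_{j_n}$ and freezes after $|J|$ reflections) together with the selection rule $J(x,\xi,t)$ define smooth functions on the full domain, that the matching at each boundary crossing is automatic from the construction of $\varphi_J^\pm$, and that successive reflections do not produce caustics inside $\mathcal{U}_\infty$ — all of which are ensured by the phase-function properties $(P)$ and by Propositions \ref{prop:contrder} and \ref{prop:decder}. All of these verifications are carried out in \cite{Schreodinger} for the Schrödinger flow, and they transpose verbatim to the present setting: here the wave bicharacteristics travel at the exact unit speed along the rays, so the semiclassical corrections needed for the Schrödinger case disappear and the geometric arguments are in fact simpler.
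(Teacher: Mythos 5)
Your proposal is correct and takes essentially the same approach the paper relies on: solve the transport equations by integrating along the broken rays at unit speed, with the amplitude transport encoded in the Gaussian-curvature ratios $G\varphi_J^\pm(x)/G\varphi_J^\pm(X^{-1}(x,\nabla\varphi_J^\pm))$, and use Duhamel along characteristics with the truncated story index $J(x,\xi,\pm(t-s))$ for the $k\ge1$ terms. The paper itself gives no further detail than ``in the exact same way as \cite{Schreodinger}, we solve the transport equations along the rays,'' so your sketch actually supplies more of the argument than the paper does, while deferring the same global bookkeeping (no caustics in $\mathcal{U}_\infty$, smoothness of $\hat{X}_{-t}$ and of the selection rule $J(x,\xi,t)$, boundary matching) to the cited construction, exactly as the paper intends.
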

With the exact same proofs as in \cite{Schreodinger}, subsection
4.3.2, following the rays at speed one instead of speed $|\xi|\in[\alpha_{0},\beta_{0}]$,
$w^{J,\pm}$ verify the following properties:
\begin{prop}
\label{propw}We have
\begin{enumerate}
\item $w_{k}^{J}(x,t)\neq0\implies(\hat{X}_{-t}(x,\nabla\varphi_{J}),\xi)\in\text{Supp}q.$
\item For $x\notin\mathcal{U}_{J}(\varphi)$ and $0\leq t\leq\epsilon|\log h|$,
$w_{k}^{J,\pm}(x,t)=0$,
\item there exists $c_{1},c_{2}>0$ such that for every $J\in\mathcal{I}$,
the support of $w_{k}^{J,\pm}$ is included in $\left\{ c_{1}|J|\leq t\right\} $
and which of $\chi_{0}w_{k}^{J,\pm}$ is included in $\left\{ c_{1}|J|\leq t\leq c_{2}(|J|+1)\right\} $,
\item in times $0\leq t\leq\epsilon|\log h|$, $\chi_{0}w_{k}^{J,\pm}$
is supported in $\mathcal{U}_{\infty}$.
\end{enumerate}
\end{prop}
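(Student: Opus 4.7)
The plan is to verify the four properties directly from the explicit solution formulas of \propref{solref}, following subsection 4.3.2 of \cite{Schreodinger} almost verbatim; the only change relative to the Schrödinger case is that rays are now followed at the uniform wave speed $1$ rather than at $|\xi| \in [\alpha_0, \beta_0]$, which in fact simplifies the bookkeeping.

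Properties $(1)$ and $(2)$ are immediate from the structure of the formulas. For $k=0$, the factor $q(\hat{X}_{-t}(x,\nabla\varphi_J^\pm),\xi)$ appearing in $w_0^{J,\pm}$ forces $(\hat{X}_{-t}(x,\nabla\varphi_J^\pm),\xi)\in\text{Supp}\,q$ as soon as $w_0^{J,\pm}(x,t)\neq0$; for $k\geq1$ an induction using the integral recursion of \propref{solref} transports this constraint. Outside $\mathcal{U}_J(\varphi)$ neither the reflected phase $\varphi_J^\pm$ nor the backward flow $\hat{X}_{-t}$ is defined, and $w_k^{J,\pm}$ is extended by zero there; the legitimacy of this extension for $0\leq t\leq\epsilon|\log h|$ follows from \propref{opens}, which guarantees that all reflected phases encountered along the trapped ray up to $|J|=O(\epsilon|\log h|)$ reflections are well defined in a common open cylinder around $\mathcal{R}$.

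Property $(3)$ is purely geometric. Between two consecutive reflections a ray travels at unit speed over a distance bounded below by $d(\Theta_1,\Theta_2)>0$, so $|J|$ reflections require time at least $c_1|J|$ with $c_1:=d(\Theta_1,\Theta_2)$, yielding the lower bound on $\text{Supp}\,w_k^{J,\pm}$. For the upper bound on $\text{Supp}\,\chi_0 w_k^{J,\pm}$, when this product is nonzero the point $x$ lies in a fixed bounded region; since each segment of the ray between consecutive reflections inside $\text{Conv}(\Theta_1\cup\Theta_2)$ has length bounded above by a constant depending only on the two obstacles, after $|J|$ reflections followed by a final free segment one gets $t\leq c_2(|J|+1)$.

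Property $(4)$ is the most delicate and is where I expect the main obstacle. Combining $(1)$ with the support condition \eqref{suppq} on $q$, the backward bicharacteristic starting from any $x\in\text{Supp}(\chi_0 w_k^{J,\pm})$ reaches a point of $\mathcal{T}_{2\epsilon|\log h|}(\mathcal{U}_\infty)$ in time at most $\epsilon|\log h|$; \lemref{distbic}, applied to this bicharacteristic, then guarantees that the entire trajectory over this time window remains inside $\mathcal{U}_\infty$, so that $x$ itself lies in $\mathcal{U}_\infty$ after an innocuous shrinking of the cylinder. Controlling the way in which the flow can depart from the trapped ray on logarithmic time scales is precisely the content of the hyperbolic distance estimates of \lemref{distsup}, which absorb the loss $h^{-2|\alpha|c\epsilon}$ in \eqref{contrq} provided $\epsilon>0$ is chosen small enough.
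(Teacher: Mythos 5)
The paper itself offers no proof of this proposition: it simply states that the four properties follow ``with the exact same proofs as in \cite{Schreodinger}, subsection 4.3.2, following the rays at speed one instead of speed $|\xi|\in[\alpha_0,\beta_0]$.'' Your proposal therefore supplies an argument the paper leaves implicit, and for properties $(1)$ and $(3)$ what you write is the right skeleton: $(1)$ reads off the factor $q(\hat X_{-t}(x,\nabla\varphi_J^\pm),\xi)$ in Proposition \ref{prop:solref} and propagates the support constraint through the recursion by induction in $k$, and $(3)$ is the purely geometric unit-speed time budget (lower bound from $d(\Theta_1,\Theta_2)$, upper bound from the diameter of $\mathrm{Conv}(\Theta_1\cup\Theta_2)$ plus a last free segment to $\mathrm{Supp}\,\chi_0$).

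There are, however, two places where the argument as written does not close. For $(2)$, Proposition \ref{prop:opens} only guarantees that the reflected phases $\varphi_J$ are \emph{defined} on a fixed neighborhood of the trapped ray; it says nothing about the behavior of $w_k^{J,\pm}$ near $\partial\mathcal U_J(\varphi)$. What legitimates the extension by zero is the microlocal support of $q$ (property $(1)$ together with \eqref{eq:suppq}): rays reaching $x$ near the boundary of $\mathcal U_J(\varphi)$ in time $t\leq\epsilon|\log h|$ necessarily flow back to points outside $\mathrm{Supp}\,q$, so $w_k^{J,\pm}$ already vanishes there. For $(4)$, the invocation of Lemma \ref{lem:distbic} is the genuine gap. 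That lemma says that a backward bicharacteristic \emph{starting from} $D$ lies strictly inside $\mathcal T_T(D)$ at times in $[-T-1,-T]$; it does not assert that the \emph{forward} flow from a point of $\mathcal T_{2\epsilon|\log h|}(\mathcal U_\infty)$ remains in $\mathcal U_\infty$ at intermediate times $t\leq\epsilon|\log h|$, which is what you need. The correct mechanism is the hyperbolicity of the billiard flow near the periodic orbit: since $\hat X_{-t}(x,\nabla\varphi_J^\pm)\in\mathrm{Supp}\,q\subset\mathcal U_\infty\cap\mathcal T_{2\epsilon|\log h|}(\mathcal U_\infty)$, the corresponding ray lies in $\mathcal U_\infty$ at time $0$ and again at time $2\epsilon|\log h|$, and the monotone exponential escape along the unstable manifold forces it to stay in a slight enlargement of $\mathcal U_\infty$ for all $0\leq s\leq 2\epsilon|\log h|$, hence in particular at $s=t$; this is the estimate encoded in Lemma \ref{lem:distsup}, not Lemma \ref{lem:distbic}. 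With that substitution the outline is sound.
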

Moreover, Proposition \propref{contrder} combined with Proposition
\propref{solref} and (\ref{eq:contrq}) gives immediatly:
\begin{prop}
The following bound hold on $\mathcal{U}_{\infty}$
\[
|D_{\xi}^{\alpha}w_{k}^{J,\pm}|\lesssim C_{\alpha}^{|J|}h^{-(2k+|\alpha|)c\epsilon}.
\]
\end{prop}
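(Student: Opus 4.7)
Proposition \propref{solref} gives closed-form expressions for $w_k^{J,\pm}$ built from $\nabla\varphi_J^\pm$, the broken spatial flow $\hat X_{-t}$, the amplitudes $\Lambda\varphi_J^\pm$ and $g_{\varphi_J}^\pm$, and the cut-off $q$. The plan is to differentiate these formulas via Faà di Bruno and induct on $k$. To close the induction cleanly I would strengthen the claim to mixed derivatives,
\[
|D_\xi^\alpha D_x^\beta w_k^{J,\pm}(x,t)| \lesssim C_{\alpha,\beta}^{|J|}\, h^{-(2k+|\alpha|+|\beta|)c\epsilon}
\]
on $\mathcal{U}_\infty$ for $0\leq t\leq\epsilon|\log h|$, the stated bound being the case $\beta=0$.

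\textbf{Base case and inductive step.} For $k=0$, $w_0^{J,\pm}=\Lambda\varphi_J^\pm(x,\xi)\,q(\hat X_{-t}(x,\nabla\varphi_J^\pm),\xi)$ is a product of $|J|+1$ ratios (making up $\Lambda$), whose numerators and denominators are polynomial expressions in the first and second derivatives of $\nabla\varphi_J^\pm$ along the broken trajectory, times the composition $q\circ\hat X_{-t}$. By Propositions \propref{contrder} and \propref{decder} these $|J|+1$ factors have $(x,\xi)$-derivatives bounded $J$-uniformly, so Leibniz yields $C_{\alpha,\beta}^{|J|}$. Faà di Bruno on $q\circ\hat X_{-t}$ produces derivatives of $q$ (each costing $h^{-2c\epsilon}$ by \eqref{contrq}) and $(x,\xi)$-derivatives of $\hat X_{-t}(\cdot,\nabla\varphi_J^\pm)$, the latter controlled by integrating the variational equation along the broken ray and remaining $C_{\alpha,\beta}^{|J|}$-bounded for $t\leq\epsilon|\log h|$. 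Absorbing the factor $2$ into $c$ gives the base case. For $k\geq 1$, differentiating the integral formula from Proposition \propref{solref} under the integral sign, each $D_\xi^\alpha D_x^\beta$ on the integrand reduces, via chain and Leibniz rules, to bounds on $D^{\alpha'}g_{\varphi_J}^\pm$ (handled as above) and on $D_y^\gamma(\Box w_{k-1}^{J'})$ at $y=\hat X_{-(t-s)}$ with $|\gamma|\leq|\alpha|+|\beta|$. Since $\Box=\partial_t^2-\Delta_y$ costs two $y$-derivatives, the induction hypothesis at order $k-1$ with $|\gamma|+2$ derivatives gives $h^{-(2(k-1)+|\gamma|+2)c\epsilon}=h^{-(2k+|\gamma|)c\epsilon}$, and the $s$-integration over $[0,t]$ with $t\lesssim|\log h|$ contributes at most a polylog factor, absorbed by slightly enlarging $c$.

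\textbf{Main obstacle.} The only real difficulty is combinatorial: keeping the constants exponential rather than factorial in $|J|$, despite $\Lambda$, $g_{\varphi_J}^\pm$, and $\hat X_{-t}$ each depending on a broken ray of $|J|$ reflections, and despite Faà di Bruno producing an increasing number of terms at each order. This is precisely what Proposition \propref{decder} is tailored to supply: its $(\alpha,\beta)$-dependent but $J$-uniform bound $D_{\alpha,\beta}^{|J|}$ on every mixed derivative of $\nabla\varphi_J$ turns the product-over-$|J|+1$-factors structure into a clean exponential-in-$|J|$ loss, absorbed into $C_{\alpha,\beta}^{|J|}$. With this uniformity in hand, the $h$-loss simply records $2c\epsilon$ per derivative falling on $q$ through \eqref{contrq}, with $2k$ such derivatives generated by the $k$-fold iteration of $\Box$ in the Neumann sum and $|\alpha|$ from the external $D_\xi^\alpha$. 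The rest is bookkeeping, which is why the author writes that the bound follows immediately.
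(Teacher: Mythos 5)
Your proposal is correct and takes essentially the same route the paper intends: the paper states the bound follows "immediately" from Propositions \propref{contrder}, \propref{solref} and \eqref{contrq} without further argument, and your sketch is the natural unwinding of that claim (noting that for the $\xi$-derivatives one also needs \propref{decder}, which the paper's attribution omits but clearly relies on). The only place your account is slightly loose is the treatment of $\partial_t^2$ inside $\Box$: these are time derivatives, not $y$-derivatives, so one either tracks $\partial_t$ in the strengthened induction hypothesis, or uses the transport equation $2\partial_t w_{k-1}=-(\nabla\varphi_J\cdot\nabla+\Delta\varphi_J)w_{k-1}-\Box w_{k-2}$ to trade each $\partial_t$ for one spatial derivative plus a lower-$k$ term, which yields the same $h^{-2c\epsilon}$ cost per time derivative; this is a bookkeeping remark, not a gap in the method.
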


\subsection{Decay estimates}

We recall the principal result who permits to estimate the decay of
the reflected solutions, namely the convergence of the product of
the Gaussian curvatures $\Lambda\varphi_{J}$ obtained by \cite{Ikawa2,IkawaMult}
and \cite{plaques}. In the present framework of two obstacles, it
writes:
\begin{prop}
\label{prop:convL}Let $0<\lambda<1$ be the product of the two eigenvalues
lesser than one of the Poincaré map associated with the periodic trajectory.
Then, there exists $0<\alpha<1$, and for $I=(1,2)$ and $I=(2,1)$,
for every $l\in\{\{1\},\{2\},\emptyset\}$, there exists a $C^{\infty}$
function $a_{I,l}$ defined in $\mathcal{U}_{\infty}$, such that,
for all $J=(\underset{r\text{ times}}{\underbrace{I,\dots,I}},l)$,
we have
\[
\underset{\mathcal{U}_{\infty}}{\sup}|\Lambda\varphi_{J}-\lambda^{r}a_{I,l}|_{m}\leq C_{m}\lambda^{r}\alpha^{|J|}.
\]
\end{prop}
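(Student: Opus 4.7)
The plan is to analyze $\Lambda\varphi_J$ as a product, along the reflected ray, of Gaussian curvature ratios at successive reflection points, and to exploit the fact that the iteration of reflections contracts the space of wavefront curvatures toward a hyperbolic fixed point determined by the periodic orbit.

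First I would set up the curvature dynamics. At each reflection point $x_k=X^{-k}(x,\nabla\varphi_J^{\pm})$, let $K_k$ denote the second fundamental form of the level set of $\varphi_J$ at $x_k$. Between two consecutive reflections the free transport modifies $K_k$ through the standard Riccati identity, and the reflection on the strictly convex obstacle $\Theta_{j_k}$ applies an explicit Möbius-type transformation $R_{j_k}$ built out of the Weingarten map of $\partial\Theta_{j_k}$ and the angle of incidence. Composing the two steps corresponding to one period $I=(1,2)$ (resp.\ $(2,1)$) gives a nonlinear map $T_I$ acting on positive symmetric matrices. On the periodic ray $\mathcal{R}$, $T_I$ has an attracting fixed point $K_I^\star$, and its linearization there is conjugate to the stable block of the Poincaré map, whose determinant is precisely $\lambda$. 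By smooth dependence on initial data, $K_I^\star$ extends to a smooth section on a neighborhood of $\mathcal{R}$; shrinking $\mathcal{U}_\infty$ if necessary, $T_I$ becomes a strict contraction on a small tube around this section, uniformly in the transverse variable.

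Next I would prove the exponential convergence $|K_k-K_I^\star|\leq C\alpha^k$ for some $\alpha\in(\lambda,1)$, uniformly over $\mathcal{U}_\infty$, by iterating the contraction estimate for $T_I$. The Gaussian curvature ratio $G\varphi_J(x_k)/G\varphi_J(x_{k+1})$ is a smooth function $F_{j_k,j_{k+1}}(K_k,K_{k+1})$ of two consecutive curvatures, and by the chain rule and the contraction property,
\[
F_{j_k,j_{k+1}}(K_k,K_{k+1}) \;=\; F_{j_k,j_{k+1}}(K_I^\star,T_I K_I^\star) + O(\alpha^k).
\]
The product of these asymptotic values over one period $I$ equals $\lambda$ (this is nothing but the stable Jacobian of the Poincaré map, and was the normalization used to define $\lambda$). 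Telescoping along $J=(I,\dots,I,l)$, after separating the $r$ "interior" periods from the initial phase and the terminal block $l$, one obtains
\[
\Lambda\varphi_J \;=\; \lambda^{r}\,a_{I,l}(x,\xi)\,\bigl(1+O(\alpha^{|J|})\bigr),
\]
where $a_{I,l}$ is the finite product collecting the boundary factors, which depends on $I$ and $l$ but not on $r$. This gives the claimed estimate in $L^\infty$ norm.

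Finally, to upgrade the bound to the $|\cdot|_m$ seminorms I would differentiate the recurrence in $x$ (and in the parameter $\xi$ when required later). Proposition~\ref{prop:contrder} ensures that all derivatives of $\nabla\varphi_J$ up to order $m$ remain uniformly bounded in $|J|$, which feeds into the derivatives of $K_k$ through the Riccati/reflection equations. A standard induction on $m$ shows that $D^m(K_k-K_I^\star)$ satisfies a linear recurrence whose principal part is again contracting (with a rate of the form $\alpha_m\in(\alpha,1)$, slightly worse than in Step 2 because of the Leibniz expansion), so that the analog geometric decay persists at the cost of enlarging the constant $C_m$ and the contraction rate. The main obstacle is precisely this last step: one has to absorb the polynomial-in-$|J|$ factors arising from repeated differentiation of a composition of $|J|$ nonlinear maps into the geometric factor $\alpha^{|J|}$, which is the core of Ikawa's curvature estimate \cite{Ikawa2,IkawaMult} and of Burq's refinement \cite{plaques}. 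Since the present configuration is identical to that of \cite{Schreodinger} (only the common speed along rays changes from $|\xi|\in[\alpha_0,\beta_0]$ to $1$, and this constant factors out of every ratio in $\Lambda\varphi_J$), the argument carries over verbatim.
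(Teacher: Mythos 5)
The paper does not actually prove this proposition. The sentence immediately preceding it reads ``We recall the principal result who permits to estimate the decay of the reflected solutions, namely the convergence of the product of the Gaussian curvatures $\Lambda\varphi_{J}$ obtained by \cite{Ikawa2,IkawaMult} and \cite{plaques}.'' The statement is cited, not re-derived, so there is no in-paper proof to compare your argument against; your sketch is a reconstruction of the Ikawa--Burq argument.

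As a reconstruction it captures the correct mechanism: track the second fundamental form of the wavefront through the Riccati transport between reflections and a M\"obius-type update at each bounce off a strictly convex boundary; use hyperbolicity of the closed orbit to produce an attracting curvature $K_I^\star$ for the one-period map, with a uniform contraction rate tied to the stable block of the Poincar\'e map (of determinant $\lambda$); telescope the Gaussian-curvature ratios into $\lambda^r$ times a convergent correction $a_{I,l}$; and obtain the $|\cdot|_m$ bounds by differentiating the recurrence and absorbing the polynomial-in-$|J|$ factors coming from iterated composition into a slightly enlarged geometric rate. You also correctly identify this last absorption as the hard part of the argument. Two technical points that your sketch glosses over but that the cited proofs make precise are worth flagging. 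First, each finite product $\Lambda\varphi_J$ is attached to a different trajectory from $y$ to the observation point, and the rays vary with $r$; one must show that the infinite product defining $a_{I,l}$ is genuinely $r$-independent, $C^\infty$ in $(x,\xi)$ on $\mathcal{U}_\infty$, and that the tail error is $O(\alpha^{|J|})$ uniformly, which relies on the hyperbolic dynamics compressing all these rays exponentially close to $\mathcal{R}$. Second, the Riccati iteration is anchored by a very specific initial condition: the starting phase $\varphi=(x-y)\cdot\xi/|\xi|$ is linear, so the initial wavefront is flat with zero second fundamental form; this is what renders the leading asymptotic $a_{I,l}$ an explicit smooth function rather than an abstract limit. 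Neither issue is a gap in principle, but the closing claim that ``the argument carries over verbatim'' presupposes that this bookkeeping has been checked, which is exactly what makes the proofs in \cite{Ikawa2,IkawaMult,plaques} nontrivial.
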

Combined with the explicit expressions of Proposition \propref{solref}
and (\ref{eq:contrq}), this result gives as in \cite{Schreodinger}
the following decay:
\begin{prop}
\label{Decays}We following bounds hold on $\mathcal{U}_{\infty}$:
\[
|w_{k}^{J,\pm}|_{m}\leq C_{k}\lambda^{|J|}h^{-(2k+m)c\epsilon}.
\]
Moreover, on the whole space, $|w_{k}^{J}|_{m}\leq C_{k}h^{-(2k+m)c\epsilon}.$
\end{prop}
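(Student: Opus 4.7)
I propose to prove the proposition by induction on the order $k$ of the Neumann correction, using the explicit expressions for $w_k^{J,\pm}$ given by Proposition \ref{prop:solref}, the bound on the product of Gaussian curvatures from Proposition \ref{prop:convL}, and the derivative controls from Propositions \ref{prop:contrder} and \ref{prop:decder} together with the symbol estimate~\eqref{eq:contrq}. The strategy parallels the one in \cite{Schreodinger}; the only difference is that the rays are traversed at unit speed, which simplifies some of the computations but leaves the overall scheme unchanged.

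For the base case $k=0$, I start from the formula
\[
w_0^{J,\pm}(x,t)=\Lambda\varphi_J^{\pm}(x,\xi)\,q\bigl(\hat X_{-t}(x,\nabla\varphi_J^{\pm}),\xi\bigr).
\]
Writing $J$ in the form $(\underbrace{I,\dots,I}_{r},l)$ with $I\in\{(1,2),(2,1)\}$, Proposition \ref{prop:convL} gives $|\Lambda\varphi_J^{\pm}|_m\le C_m\lambda^{|J|}$ (possibly after replacing $\lambda$ by one of its roots, which is harmless). The Leibniz rule applied to the composition $q(\hat X_{-t},\xi)$ produces at each order a polynomial in the spatial derivatives of $\hat X_{-t}$ and $\nabla\varphi_J^{\pm}$, which are bounded uniformly in $J$ on $\mathcal U_{\infty}$ by Propositions \ref{prop:contrder} and \ref{prop:decder}, times derivatives of $q$ of total order at most $m$, each contributing a factor $h^{-2c\epsilon}$ by \eqref{eq:contrq}. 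Multiplying these contributions yields the bound $|w_0^{J,\pm}|_m\le C_m\lambda^{|J|}h^{-mc\epsilon}$.

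For the inductive step $k\ge 1$, I use
\[
w_k^{J,\pm}(x,t)=\int_0^t g_{\varphi_J}^{\pm}(x,t-s,\xi)\,\Box w_{k-1}^{J(x,\xi,\pm(t-s))}\bigl(\hat X_{-(t-s)}(x,\nabla\varphi_J^{\pm}),s\bigr)\,ds.
\]
The factor $g_{\varphi_J}^{\pm}$ is the partial product of the Gaussian-curvature ratios from the current point back to $\hat X_{-(t-s)}$; by Proposition \ref{prop:convL} applied to this partial product one obtains the bound $|g_{\varphi_J}^{\pm}|_m\le C_m\lambda^{|J|-|J(x,\xi,\pm(t-s))|}$. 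Combined with the inductive hypothesis $|w_{k-1}^{J'}|_{m+2}\le C_{k-1}\lambda^{|J'|}h^{-(2k+m)c\epsilon}$ applied to $J'=J(x,\xi,\pm(t-s))$, the products of exponents in $\lambda$ telescope exactly to $\lambda^{|J|}$. The $\Box$ produces at most two extra spatial derivatives, contributing the jump from $h^{-2(k-1)c\epsilon}$ to $h^{-2kc\epsilon}$. The time integration runs over an interval of length at most $\epsilon|\log h|$, and by property $(3)$ of Proposition \ref{propw} one in fact has $t\lesssim |J|$, so this extra length is either negligible or absorbed in the $h^{-(2k+m)c\epsilon}$ factor after possibly enlarging $c$ or $C_k$. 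Differentiating the integral formula up to order $m$ in $x$ and treating the derivatives of $g_{\varphi_J}^{\pm}$, $\hat X_{-(t-s)}$, $\nabla\varphi_J^{\pm}$ via Propositions \ref{prop:contrder} and \ref{prop:decder} produces only harmless powers and closes the induction.

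The main obstacle is the bookkeeping in the inductive step: one must track that the decay $\lambda^{|J|}$ survives the composition of the partial curvature product in $g_{\varphi_J}^{\pm}$ with the curvature product already carried by $w_{k-1}^{J(x,\xi,\pm(t-s))}$, and simultaneously keep the loss $h^{-(2k+m)c\epsilon}$ uniform in $J$. Thanks to the telescoping structure of the curvature ratios identified in \cite{Schreodinger}, this combines into the full $\Lambda\varphi_J^{\pm}$, which is the content of Proposition \ref{prop:convL}. The second statement, valid on the whole of $\Omega$ without the factor $\lambda^{|J|}$, follows from the same argument after dropping Proposition \ref{prop:convL} and using only that each curvature ratio is bounded on its domain of definition, so the product stays uniformly bounded in $J$.
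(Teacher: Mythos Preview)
Your proposal is correct and follows exactly the approach the paper indicates: the paper itself gives no detailed proof but simply states that the bound follows ``as in \cite{Schreodinger}'' from the explicit expressions of Proposition~\ref{prop:solref}, the curvature-product convergence of Proposition~\ref{prop:convL}, and the symbol estimate~\eqref{eq:contrq}, which is precisely the induction on $k$ with the telescoping of the partial curvature products that you have written out. Your caveats (renaming $\lambda$ to absorb the factor $|J|$ coming from the time integration, and invoking the partial-product version of the Ikawa--Burq decay for $g_{\varphi_J}^{\pm}$) are exactly the points handled in \cite{Schreodinger} and do not constitute gaps.
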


\subsection{Critical points of the phase}

We need to study the critical points of the phase in order to be able
to perform a stationary phase argument on the solution we are building.
At the difference of \cite{Schreodinger}, the phases here stationate
in whole directions. Therefore, we will perform a stationary phase
on each sphere $\mathcal{S}^{n-1}(0,s)$. To this purpose, we need
\begin{prop}
\label{prop:non_deg_pha}Let us denote
\[
\mathcal{S}_{J}^{\pm}(x,\xi,t)=\varphi_{J}^{\pm}(x,\xi)|\xi|-t|\xi|.
\]
Then, there exists $\eta>0$ such that for all $|J|\geq1$,
\begin{gather}
d(X^{-|J|}(x,\nabla\varphi_{J}^{\pm}(x,\xi)),y)\leq\eta\text{\text{ and }}w^{J,\pm}(x,t,\xi)\neq0\nonumber \\
\implies D_{\xi}\mathcal{S}_{J}^{\pm}(x,\xi,t)\neq0.\label{eq:non_deg1-1}
\end{gather}
Moreover, as soon as $d(X^{-|J|}(x,\nabla\varphi_{J}(x,\xi)),y)\text{\ensuremath{\geq}}\eta$,
for all $s>0$ and $x$ there exists a unique $s_{J}(x,s)\in\mathcal{S}^{n-1}(0,s)$
such that, for all $t\geq0$
\begin{equation}
w^{J,\pm}(x,\cdot,s_{J}^{\pm}(x,s))\neq0\text{ and }D_{\mathcal{S}^{n-1}(0,s)}\mathcal{S}_{J}^{\pm}(x,s_{J}^{\pm}(x,s),t)=0.\label{eq:crit2}
\end{equation}
Furthermore, if $d(X^{-|J|}(x,\nabla\varphi_{J}(x,\xi)),y)\geq\eta$,
\begin{equation}
\det D_{\mathcal{S}^{n-1}(0,s)}^{2}\mathcal{S^{\pm}}_{J}(x,s_{J}^{\pm}(x,s),t)>c.\label{eq:crit3}
\end{equation}
\end{prop}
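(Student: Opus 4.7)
The proof rests on a clean identification of $\nabla_\xi \mathcal{S}_J^\pm$ with a concrete geometric object. Using that $\varphi_J^\pm$ depends on $\xi$ only through the direction $\omega = \xi/|\xi|$, one writes $\mathcal{S}_J^\pm(x,\xi,t) = |\xi|\,(\tilde\varphi_J^\pm(x,\omega) - t)$ and decomposes the full $\xi$-gradient into its radial and spherical parts:
\[
\nabla_\xi \mathcal{S}_J^\pm = (\tilde\varphi_J^\pm(x,\omega) - t)\,\omega + P_{\omega^\perp}\nabla_\omega \tilde\varphi_J^\pm(x,\omega).
\]
The crucial step is to compute the tangential piece via the envelope theorem. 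Since $\tilde\varphi_J^\pm(x,\omega)$ is the length of the broken path from the plane $\{z:(z-y)\cdot\omega = 0\}$ through the $|J|$ prescribed reflections to $x$, Snell's law makes the variations with respect to the internal reflection points $X^1,\dots,X^{|J|}$ vanish, while a Lagrange multiplier handling the constraint $(p_0-y)\cdot\omega=0$ on the initial point $p_0 = X^{-|J|}(x,\nabla\varphi_J^\pm)$ yields $P_{\omega^\perp}\nabla_\omega\tilde\varphi_J^\pm = X^{-|J|} - y$. Combined with $\hat X_{-t}(x,\nabla\varphi_J^\pm) = X^{-|J|} - (t-\tilde\varphi_J^\pm)\,\omega$, which is built into the definition of the backward broken flow, this gives the clean identity
\[
\nabla_\xi \mathcal{S}_J^\pm(x,\xi,t) = \hat X_{-t}(x,\nabla\varphi_J^\pm(x,\xi)) - y.
\]

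From this identity, (\ref{eq:non_deg1-1}) amounts to ruling out the simultaneous occurrence of $X^{-|J|}(x,\nabla\varphi_J^\pm)=y$ and $t=\tilde\varphi_J^\pm$, which is the only way $\hat X_{-t}$ can coincide with $y$. The hypothesis $w^{J,\pm}(x,t,\xi)\neq 0$ forces $\hat X_{-t}$ to belong to the spatial support of $q_{\epsilon,h,N}$ and, via Proposition \ref{propw}, constrains $t$ to lie in $c_1|J|\leq t\leq c_2(|J|+1)$; for $|X^{-|J|}-y|\leq\eta$ with $\eta$ chosen sufficiently small relative to the support of $q_{\epsilon,h,N}$ and to the obstacle geometry, this combination of constraints precludes $\hat X_{-t}=y$ for every $|J|\geq 1$.

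The same identity restricted to spheres reads $\nabla_{S^{n-1}(0,s)}\mathcal{S}_J^\pm = s\,(X^{-|J|}(x,\nabla\varphi_J^\pm) - y)$, so the spherical critical point condition (\ref{eq:crit2}) is the geometric statement $X^{-|J|}(x,\omega)=y$: $\omega$ is the initial direction of a broken ray with story $J$ issuing from $y$ and passing through $x$. The existence and uniqueness of such a direction $\omega^\star(x)$ in the regime $d(X^{-|J|},y)\geq\eta$ follow from the strict convexity of the obstacles, together with Proposition \propref{opens}, which guarantees that the reflected phase $\varphi_J^\pm$ is well-defined throughout $\mathcal{U}_\infty$; one then sets $s_J^\pm(x,s)=s\,\omega^\star(x)$. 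For the non-degeneracy (\ref{eq:crit3}), differentiating the identity once more shows that $\det D^2_{S^{n-1}(0,s)}\mathcal{S}_J^\pm$ at $\omega^\star$ equals, up to the positive factor $s^{n-1}$, the Jacobian on $S^{n-1}$ of the map $\omega\mapsto X^{-|J|}(x,\omega)$. The main obstacle is bounding this Jacobian from below by a constant $c>0$ uniformly in $|J|$: this is where the hyperbolic structure of the trapped ray enters through Proposition \propref{convL}. The geometric contraction of the product of Gaussian curvatures of the reflected wavefronts corresponds dually to geometric amplification along the unstable direction of the Poincar\'e map, and this amplification controls from below the Jacobian of the end-to-end broken ray map from the initial plane through $y$ to the wavefront through $x$, uniformly in the length of $J$, yielding the required non-degeneracy.
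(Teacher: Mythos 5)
Your derivation of the key identity $D_\xi\mathcal{S}_J^\pm = \hat X_{-t}(x,\nabla\varphi_J^\pm)-y$, and the argument it feeds for~(\ref{eq:non_deg1-1}), are equivalent to the paper's (the paper obtains the same formula by differentiating the eikonal equation $|\nabla\varphi_J\,|\xi||^2=|\xi|^2$ in $\xi$ and integrating the resulting transport equation along the broken characteristic, while you invoke Fermat's principle; these are the same computation). From there, however, you go off the rails in two places.

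First, the spherical gradient formula $\nabla_{S^{n-1}(0,s)}\mathcal{S}_J^\pm = s\,(X^{-|J|}-y)$ is wrong: what the identity actually gives after projecting onto $T_\xi\mathcal{S}^{n-1}(0,s)$ is
\[
D_{\mathcal{S}^{n-1}(0,s)}\mathcal{S}_J^\pm(x,\xi,t)=X^{-|J|}(x,\nabla\varphi_J^\pm)-y-\Bigl((X^{-|J|}(x,\nabla\varphi_J^\pm)-y)\cdot\tfrac{\xi}{|\xi|}\Bigr)\tfrac{\xi}{|\xi|},
\]
i.e.\ the \emph{tangential} part of $X^{-|J|}-y$. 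You have silently dropped the projector, which is only legitimate if $X^{-|J|}-y$ were already tangential. It is not: $X^{-|J|}$ is the last backward reflection point on an obstacle, not a point on the plane $\{(z-y)\cdot\omega=0\}$. Consequently your characterization of the critical set as $X^{-|J|}(x,\omega)=y$ is incorrect; the correct condition is that $X^{-|J|}-y$ is \emph{parallel} to $\xi/|\xi|$, with the sign $\bigl(X^{-|J|}-y\bigr)\cdot\xi/|\xi|>0$ picking out the unique admissible direction on each sphere (the opposite one is excluded because $w^{J,+}$ vanishes for $J\in\mathcal{I}_-$). Indeed, under the hypothesis $d(X^{-|J|},y)\geq\eta>0$ your version would admit \emph{no} critical point, contradicting the statement you are trying to prove.

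Second, your route to the Hessian non-degeneracy~(\ref{eq:crit3}) is both different from, and weaker than, what is needed. The paper computes the restricted Hessian via a Lagrange-multiplier identity (the restricted Hessian equals $d^2f-\lambda\, d^2g$ on $T_\xi\mathcal{S}^{n-1}$, with $g(\xi)=|\xi|^2-s^2$): after subtracting $2\lambda\,\mathrm{Id}$ and noting that $\xi\xi^t$ annihilates the tangent space, the restricted Hessian equals
\[
-\bigl(X^{-|J|}-y\bigr)\cdot\tfrac{\xi}{|\xi|}\,\mathrm{Id}\;-\;\sum_k\int_0^{l/|\xi|}\!M_k M_k^t,
\]
the sum of a negative-definite multiple of the identity (with coefficient $\geq\eta$ thanks to the parallelism and $d(X^{-|J|},y)\geq\eta$) and negative-semidefinite rank-one terms. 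Uniformity in $|J|$ is therefore automatic and purely sign-theoretic; no appeal to the hyperbolic dynamics is needed. Your proposal to identify the Hessian determinant with the Jacobian of $\omega\mapsto X^{-|J|}(x,\omega)$ and then lower-bound it through Proposition~\propref{convL} misreads that proposition: it describes the exponential \emph{decay} of the Gaussian-curvature products $\Lambda\varphi_J$ (an upper bound used for the amplitude estimates), not a lower bound on any ray-map Jacobian, and it would anyhow give information that degrades with $|J|$ rather than a uniform constant $c>0$.
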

\begin{proof}
For the seek of lisibility, we denote $\mathcal{S}_{J}=\mathcal{S}_{J}^{+}$,
$\varphi_{J}:=\varphi_{J}^{+}$, $w^{J}=w^{J,+}$ and we make the
proof for the positive part of the wave, $w^{J,+}$: for $w^{J,-}$,
the proof is the same. 

In the same way as in \cite{Schreodinger}, we obtain, differentiating
$|\nabla\varphi_{J}(x,\xi)|\xi||^{2}=|\xi|^{2}$ with respect to $\xi$
and integrating the transport equation obtained along the rays up
to the first phase:
\begin{equation}
D_{\xi}\mathcal{S}_{J}^{\pm}(x,\xi,t)=X^{-|J|}(x,\nabla\varphi_{J}(x,\xi))-y-(t-l_{J}(x,\xi))\frac{\xi}{|\xi|}.\label{eq:derSJ}
\end{equation}
Note that, by Proposition (\ref{propw}), (1), $w^{J,\pm}(x,\xi,t)\neq0$
implies that, because $q$ is supported away of the boundary, for
$|J|\geq1$ 
\[
t-l_{J}(x,\xi)\geq\delta_{0}>0
\]
and thus, we get (\ref{eq:non_deg1-1}). Moreover, we deduce that
\begin{multline*}
D_{\mathcal{S}^{n-1}(0,s)}\mathcal{S}_{J}(x,\xi)\\
=X^{-|J|}(x,\nabla\varphi_{J}^{\pm}(x,\xi))-y-\left(\left(X^{-|J|}(x,\nabla\varphi_{J}(x,\xi))-y\right)\cdot\frac{\xi}{|\xi|}\right)\frac{\xi}{|\xi|}.
\end{multline*}
Thus, if $\xi$ is such that $D_{\mathcal{S}^{n-1}(0,s)}\mathcal{S}_{J}(x,\xi)=0$,
\[
X^{-|J|}(x,\nabla\varphi_{J}(x,\xi))-y\sslash\frac{\xi}{|\xi|},
\]
hence $\frac{\xi}{|\xi|}$ is a direction allowing reaching the point
$x$ from the point $y$ following the story of reflection $J$. Note
that there is a priori two such vectors on $\mathcal{S}^{n-1}(0,s)$:
one and its opposite, but because $w^{J,+}=0$ for $J\in\mathcal{I}_{-}$,
we have $w^{J}(x,t,\xi)=0$ for one of them. We thus get (\ref{eq:crit2}).
Note that the critical point $\xi$ such that $w^{J}(x,t,\xi)\neq0$
is the one verifying
\begin{equation}
\left(X^{-|J|}(x,\nabla\varphi_{J}(x,\xi))-y\right)\cdot\frac{\xi}{|\xi|}>0.\label{eq:x-ypo}
\end{equation}

The Hessian in $\mathbb{R}^{n}$ of $\mathcal{S}_{J}$ is derivated
like in \cite{Schreodinger}, differentiating $|\nabla\varphi_{J}(x,\xi)|\xi||^{2}=|\xi|^{2}$
with respect to $\xi$ and integrating the transport equation obtained
along the rays once again:

\begin{multline*}
D_{\xi}^{2}S_{J}^{\pm}(x,\xi,t)=\frac{l}{|\xi|}Id-(\frac{Id}{|\xi|}-\frac{\xi\xi^{t}}{|\xi|^{3}})t\\
-\sum_{k=1}^{3}\int_{0}^{\frac{l}{|\xi|}}D_{\xi}\partial_{x_{k}}\psi_{J^{(s)}}(X_{-s}(x,\nabla\varphi_{J}(x,\xi)),\xi)\left(D_{\xi}\partial_{x_{k}}\psi_{J^{(s)}}(X_{-s}(x,\nabla\varphi_{J}(x,\xi)),\xi)\right)^{t}.
\end{multline*}
We would like to deduce an expression of $D_{\mathcal{S}^{n-1}(0,s)}^{2}S_{J}(x,\xi,t)$
for $\xi=s_{J}(x,t)$. To this purpose, we recall that
\begin{lem}
\label{restrc_hess}Let $g:\mathbb{R}^{n}\rightarrow\mathbb{R}^{m}$
be a submersion in $0$ and $M:=g^{-1}(0)$. Moreover, let $f:\mathbb{R}^{n}\rightarrow\mathbb{R}$
and $F$ be its restriction to $M$. We suppose that $F$ has a critical
point in $a\in M$. Then, the Hessian of $F$ in $a$ is the restriction
of
\[
d^{2}f_{a}-\lambda\circ d^{2}g_{a}
\]
to $T_{a}M$, where $\lambda$ is the Lagrange multiplicator of $f$
with respect to $g$ in $a$, that is the unique linear form $\lambda\in\mathcal{L}(\mathbb{R}^{m},\mathbb{R})$
such that $df_{a}=\lambda\circ dg_{a}.$
\end{lem}
Here, we can take $g(\xi):=|\xi|^{2}-s^{2}$. Then $Dg(\xi)=2\xi.$
The Lagrange multiplicator of $\mathcal{S}_{J}(x,\cdot)$ with respect
to $g$ in $\xi:=s_{J}(x,s)$ is the unique $\lambda\in\mathbb{R}$
such that
\[
D_{\xi}\mathcal{S}_{J}(x,t,\xi)=2\lambda\xi.
\]
Therefore, according to (\ref{eq:derSJ}) 
\[
2\lambda=\left(X^{-|J|}(x,\nabla\varphi_{J}(x,\xi))-y\right)\cdot\frac{\xi}{|\xi|}-(t-l_{J}(x,\xi))\frac{1}{|\xi|}.
\]
On the other hand, by lemma \ref{restrc_hess}
\[
D_{\mathcal{S}^{n-1}(0,s)}^{2}S_{J}(x,\xi,t)=\left(D_{\xi}^{2}S_{J}(x,\xi,t)-2\lambda\text{Id}\right)_{|T_{\xi}S^{n-1}(0,s)}
\]
Thus, $D_{\mathcal{S}^{n-1}(0,s)}^{2}S_{J}(x,\xi,t)$ is the restriction
to $T_{\xi}\mathcal{S}^{n-1}(0,s)$ of

\begin{gather*}
-(X^{-|J|}(x,\nabla\varphi_{J}^{\pm}(x,\xi))-y)\cdot\frac{\xi}{|\xi|}Id+\frac{\xi\xi^{t}}{|\xi|^{3}}\\
-\sum_{k=1}^{3}\int_{0}^{\frac{l}{|\xi|}}D_{\xi}\partial_{x_{k}}\psi_{J^{(s)}}(X_{-s}(x,\nabla\varphi_{J}(x,\xi)),\xi)\left(D_{\xi}\partial_{x_{k}}\psi_{J^{(s)}}(X_{-s}(x,\nabla\varphi_{J}(x,\xi)),\xi)\right)^{t}.
\end{gather*}
But, the quadratic form $\xi\xi^{t}$ vanishes in $T_{\xi}\mathcal{S}^{n-1}$.
Therefore, $D_{\mathcal{S}^{n-1}(0,s)}^{2}S_{J}(x,\xi,t)$ is the
restriction to $T_{\xi}\mathcal{S}^{n-1}(0,s)$ of
\begin{gather*}
-(X^{-|J|}(x,\nabla\varphi_{J}(x,\xi))-y)\cdot\frac{\xi}{|\xi|}\text{Id}\\
-\sum_{k=1}^{3}\int_{0}^{\frac{l}{|\xi|}}D_{\xi}\partial_{x_{k}}\psi_{J^{(s)}}(X_{-s}(x,\nabla\varphi_{J}(x,\xi)),\xi)\left(D_{\xi}\partial_{x_{k}}\psi_{J^{(s)}}(X_{-s}(x,\nabla\varphi_{J}(x,\xi)),\xi)\right)^{t}.\\
.
\end{gather*}
And as the matrices
\[
D_{\xi}\partial_{x_{k}}\psi_{J^{(s)}}(X_{-s}(x,\nabla\varphi_{J}(x,\xi)),\xi)\left(D_{\xi}\partial_{x_{k}}\psi_{J^{(s)}}(X_{-s}(x,\nabla\varphi_{J}(x,\xi)),\xi)\right)^{t}
\]
are positives, and according to (\ref{eq:x-ypo}), we get the last
part of the statement.
\end{proof}

\section{Proof of the main result}

Let $K\geq0$. By the previous section, the function
\[
(x,t)\rightarrow\frac{1}{(2\pi h)^{d}}\sum_{\pm}\sum_{J\in\mathcal{I}}\int\sum_{k=0}^{K}w_{k}^{J,\pm}(x,t,\xi)e^{\mp i(\varphi_{J}^{\pm}(x,\xi)|\xi|-t|\xi|)/h}(\mp ih/|\xi|)^{k}d\xi
\]
satisfies the approximate equation
\[
\partial_{t}^{2}u-\Delta u=(\mp h)^{K}\frac{1}{(2\pi h)^{d}}\sum_{\pm}\sum_{J\in\mathcal{I}}\int\Box w_{K-1}^{J,\pm}(x,t,\xi)e^{\mp i(\varphi_{J}^{\pm}(x,\xi)|\xi|-t|\xi|)/h}|\xi|^{-K}d\xi
\]
with data 
\[
(u(0),\partial_{t}u(0))=(\delta_{\epsilon,h,N}^{y},0).
\]
By the Duhamel formula, the difference from the actual solution, that
is from $\cos(t\sqrt{-\Delta})\delta^{y}$, is bounded in $H^{m}$
norm by 
\[
C\times h^{K-d}\times|t|\sup_{s\in[0,t],\xi\in[\alpha_{0},\beta_{0}]}\sum_{\pm}\sum_{J\in\mathcal{I}}\Vert\Box w_{K-1}^{J,\pm}(\cdot,s,\xi)e^{\mp i(\varphi_{J}^{\pm}(\cdot,\xi)|\xi|-t|\xi|)/h}\Vert_{H^{m}}.
\]
 So, for $0\leq t\leq\epsilon|\log h|$
\begin{equation}
\left(\cos(t\sqrt{-\Delta})\delta^{y}\right)(x)=S_{K}(x,t)+R_{K}(x,t)\label{eq:sumdelta}
\end{equation}
with
\begin{equation}
S_{K}(x,t)=\frac{1}{(2\pi h)^{d}}\sum_{\pm}\sum_{J\in\mathcal{I}}\int\sum_{k=0}^{K}w_{k}^{J,\pm}(x,t,\xi)e^{\mp i(\varphi_{J}^{\pm}(x,\xi)|\xi|-t|\xi|)/h}(\mp ih/|\xi|)^{k}d\xi\label{eq:SK}
\end{equation}
and
\begin{equation}
\Vert R_{K}\Vert_{H^{m}}\lesssim|\log h|h^{K-d}\sup_{s,\xi}\sum_{\pm}\sum_{J\in\mathcal{I}}\Vert\Box w_{K-1}^{J,\pm}(\cdot,s,\xi)e^{\mp i(\varphi_{J}^{\pm}(\cdot,\xi)|\xi|-t|\xi|)/h}\Vert_{H^{m}}.\label{eq:rK1}
\end{equation}

\subsection*{The reminder}

We first deal with the reminder term $R_{K}$. Let us denote
\[
W_{K-1}^{J,\pm}(x,s,\xi)=\Box w_{K-1}^{J,\pm}(\cdot,s,\xi)e^{\mp i(\varphi_{J}^{\pm}(\cdot,\xi)|\xi|-t|\xi|)/h}
\]
 Notice that, by construction of the $w_{k}$'s, $w_{k}^{J}$ is supported
in a set of diameter $(C+t)$. Therefore, using Proposition \ref{Decays},
Proposition \propref{contrder} and the derivative of a product we
get:
\[
\sum_{J\in\mathcal{I}}\Vert\partial^{m}W_{K-1}^{J,\pm}\Vert_{L^{2}}\lesssim C_{K}(1+t)^{\frac{d}{2}}\sum_{J\in\mathcal{I}}\Vert\partial^{m}W_{K-1}^{J,\pm}\Vert_{L^{\infty}}\lesssim C_{K}(1+t)^{\frac{d}{2}}h^{-m}\sum_{J\in\mathcal{I}}h^{-(2K+m)c\epsilon}
\]
and thus, by the Sobolev embedding $H^{d}\hookrightarrow L^{\infty}$
and (\ref{eq:rK1})
\[
\Vert R_{K}\Vert_{L^{\infty}}\lesssim C_{K}|\log h|h^{K-2d}(1+t)^{\frac{d}{2}}h^{-(2K+d)c\epsilon}|\left\{ J\in\mathcal{I},\text{ s.t }w_{K-1}^{J}\neq0\right\} |.
\]
By Proposition \ref{propw} we get in the same way as in \cite{Schreodinger}
\begin{equation}
|\left\{ J\in\mathcal{I},\text{ s.t }w_{K-1}^{J}\neq0\right\} |\lesssim(1+t),\label{eq:wkJnonnul}
\end{equation}
and therefore
\begin{equation}
\Vert R_{K}\Vert_{L^{\infty}}\lesssim C_{K}|\log h|h^{K(1-2c\epsilon)-d(2+c\epsilon)}(1+t)^{\frac{d}{2}+1}\label{eq:estRK}
\end{equation}
Thus, for $0\leq t\leq\epsilon|\log h|$ 
\[
\Vert R_{K}\Vert_{L^{\infty}}\lesssim C_{K}|\log h|^{\frac{d}{2}+1}h^{K(1-2c\epsilon)-d(2+c\epsilon)}\leq C_{K}h^{K(1-2c\epsilon)-d(1+c\epsilon)-1}.
\]
We take $\epsilon>0$ small enough so that $2c\epsilon\leq\frac{1}{2}$
and we get
\[
\Vert R_{K}\Vert_{L^{\infty}}\leq C_{K}h^{\frac{K}{2}-3d-1}.
\]
Let us now fix $K$ large enough so that
\[
\frac{K}{2}-3d-1\geq-\frac{d+1}{2}+1.
\]
Then, as $0\leq t\leq\epsilon|\log h|$ is equivalent to $h\leq e^{-\frac{t}{\epsilon}}$,
we obtain
\begin{equation}
\Vert R_{K}\Vert_{L^{\infty}}\leq C_{K}h^{-\frac{d+1}{2}}e^{-\frac{t}{\epsilon}}.\label{eq:RK}
\end{equation}
for $0\leq t\leq\epsilon|\log h|$.

\subsection*{The free wave $J=\emptyset$}

Let us denote 
\[
S_{K}^{\emptyset}(x,t)=\frac{1}{(2\pi h)^{d}}\sum_{\pm}\int\sum_{k=0}^{K}(\mp ih/|\xi|)^{k}w_{k}^{\emptyset,\pm}(x,t,\xi)e^{-i((x-y)\cdot\xi\mp t|\xi|)/h}d\xi
\]
the free part of the wave, and
\[
S_{K}^{r}=\frac{1}{(2\pi h)^{d}}\sum_{\pm}\sum_{|J|\geq1}\int\sum_{k=0}^{K}w_{k}^{J,\pm}(x,t,\xi)e^{\mp i(\varphi_{J}^{\pm}(x,\xi)|\xi|-t|\xi|)/h}(\mp ih/|\xi|)^{k}d\xi,
\]
the reflected waves, in such a way that
\[
S_{K}=S_{K}^{\emptyset}+S_{K}^{r}.
\]
Note that $S_{K}^{\emptyset}$ is simply the approximate expression
of the solution of the wave equation with data $(\delta^{y},0)$,
in the free space:
\[
S_{K}^{\emptyset}(x,t)=\left(\cos(t\sqrt{-\Delta_{0}})\delta^{y}\right)(x)+R_{K}^{\emptyset}(x,t)
\]
where $\Delta_{0}$ denote the Laplacian in the free space and by
the Duhamel formula, for $0\leq t\leq\epsilon|\log h|$
\[
\Vert R_{K}^{\emptyset}\Vert_{H^{m}}\lesssim h^{K-d}|\log h|\sup_{s,\xi}\Vert\Box w^{\emptyset}(\cdot,s,\xi)\Vert_{H^{m}}
\]
The usual dispersive estimate for the waves in the free space gives,
by the frequencies localization of $\delta^{y}$
\[
|\cos(t\sqrt{-\Delta_{0}})\delta^{y}|\lesssim\frac{1}{h^{\frac{d+1}{2}}t^{\frac{d-1}{2}}}
\]
and thus dealing with $R_{K}^{\emptyset}$ as we did for $R_{K}$
we get
\begin{equation}
|S_{K}^{\emptyset}|\lesssim\frac{1}{h^{\frac{d+1}{2}}t^{\frac{d-1}{2}}},\text{ for }0\leq t\leq\epsilon|\log h|.\label{eq:SKsmallt}
\end{equation}

\subsection*{The reflected waves $|J|\geq1$}

According to Proposition \propref{non_deg_pha}, the parts 
\[
d(X^{-|J|}(x,\nabla\varphi_{J}^{\pm}),y)\leq\eta
\]
 enjoys a rapid decay and we thus have
\begin{multline*}
S_{K}^{r}=\frac{1}{(2\pi h)^{d}}\sum_{\pm}\sum_{|J|\geq1}\int\sum_{k=0}^{K}w_{k}^{J,\pm}(x,t,\xi)e^{\mp i(\varphi_{J}^{\pm}(x,\xi)|\xi|-t|\xi|)/h}1_{d(X^{-|J|}(x,\nabla\varphi_{J}^{\pm}),y)\geq\eta}(\mp ih/|\xi|)^{k}d\xi\\
+\sum_{|J|\geq1}O(h^{\infty}).
\end{multline*}
Note that, by (\ref{eq:wkJnonnul}), the $O(h^{\infty})$ part does
not contribute. We write the remaining part of $S_{K}^{r}$ as, illegetimately
omiting $1_{d(X^{-|J|}(x,\nabla\varphi_{J}^{\pm}),y)\geq\eta}$ for
the seek of lisibility:

\begin{multline*}
S_{K}^{r}(x,t)=\frac{1}{(2\pi h)^{d}}\sum_{\pm}\sum_{J\in\mathcal{I}}\int\sum_{k=0}^{K}w_{k}^{J,\pm}(x,t,\xi)e^{\mp i(\varphi_{J}^{\pm}(x,\xi)|\xi|-t|\xi|)/h}(\mp ih/|\xi|)^{k}d\xi\\
=\frac{1}{(2\pi h)^{d}}\sum_{\pm}\sum_{J\in\mathcal{I}}\int_{s=\alpha_{0}}^{\beta_{0}}\int_{|\xi|=s}\sum_{k=0}^{K}w_{k}^{J,\pm}(x,t,\xi)e^{\mp i(\varphi_{J}^{\pm}(x,\xi)|\xi|-t|\xi|)/h}(\mp ih/s)^{k}d\xi ds.
\end{multline*}
According to Proposition \propref{non_deg_pha}, we can perform a
stationnary phase on each sphere $\left\{ |\xi|=s\right\} $, for
each term of the sum $\sum_{\pm}\sum_{|J|\geq1}$, up to order $h^{k_{0}}$.
We obtain, as the sphere is of dimension $d-1$, for $t\geq0$
\begin{multline}
S_{K}^{r}(x,t)=\frac{1}{(2\pi h)^{d}}h^{\frac{d-1}{2}}\sum_{\pm,J\in\mathcal{I}}\sum_{k=0}^{k_{0}}\int_{\alpha_{0}}^{\beta_{0}}e^{\mp i(\varphi_{J}^{\pm}(x,s_{J}(t,x))s-ts)/h}(\mp ih/s)^{k}\tilde{w}_{k}^{J,\pm}(t,x)ds\\
+\frac{1}{(2\pi h)^{d}}h^{\frac{d-1}{2}}\sum_{\pm,J\in\mathcal{I}}\int_{\alpha_{0}}^{\beta_{0}}R_{\text{st.ph.}}^{J,\pm}(x,t,s)ds\\
+\frac{1}{(2\pi h)^{d}}\sum_{\pm,J\in\mathcal{I}}\int\sum_{k=k_{0}+1}^{K}w_{k}^{J}(x,t,\xi)e^{\mp i(\varphi_{J}^{\pm}(x,\xi)|\xi|-t|\xi|)/h}(\mp ih/|\xi|)^{k}d\xi.\label{eq:sk}
\end{multline}
where for $0\leq k\leq k_{0}$, the term $\tilde{w}_{k}^{J,\pm}$
is a linear combination of
\begin{gather*}
D_{\xi}^{2k}w_{0}^{J,\pm}(t,x,s_{J}(t,x,s)),D_{\xi}^{2(k-1)}w_{1}^{J,\pm}(t,x,s_{J}(t,x,s)),\dots,w_{k}^{J,\pm}(t,x,s_{J}(t,x,s)),
\end{gather*}
where $s_{J}(t,x,s)$ is the stationnary point of the phase on the
hypersphere $\{|\xi|=s\}$ and $R_{\text{st.ph.}}^{J,\pm}$ is the
reminder involved in the stationnary phase. Then the proof proceed
as in \cite{Schreodinger}, Section 5, and we obtain in the exact
same way, combining the decays estimates of Proposition \ref{Decays}
with the informations of the temporal support of $w^{J}$ given by
Proposition \ref{propw}, for $\epsilon>0$ small enough depending
only of $\alpha_{0},\beta_{0}$ and of the geometry of the obstacles,
for $0\leq t\leq\epsilon|\log h|$ and $x\in\text{Supp}\chi_{0}$
:
\begin{gather*}
\sum_{J\in\mathcal{I}}|w_{k}^{J,\pm}|\leq C_{k}h^{-\frac{1}{2}}e^{-\mu t},\ 1\leq k\leq K-1,\\
\sum_{J\in\mathcal{I}}|w_{0}^{J,\pm}|\lesssim e^{-\mu t},\\
\sum_{J\in\mathcal{I}}|D_{\xi}^{2k}w_{0}^{J,\pm}|\lesssim C_{k}h^{-\frac{1}{2}}e^{-t/4\epsilon},\ 1\leq k\leq k_{0},\\
\sum_{J\in\mathcal{I}}|R_{\text{st.ph.}}^{J,\pm}|\lesssim e^{-t/\epsilon}.
\end{gather*}
for some $\mu=\mu(\epsilon)$ . Therefore we obtain, taking $k_{0}=\lceil\frac{d-1}{2}\rceil$,
for some $\nu>0$ depending only of $\alpha_{0},\beta_{0}$ and of
the geometry of the obstacles
\begin{equation}
|\chi_{0}S_{K}^{r}(x,t)|\lesssim h^{-\frac{d+1}{2}}e^{-\nu t}\ \text{ for }0\leq t\leq\epsilon|\log h|.\label{eq:SKgrand}
\end{equation}

\subsection*{Conclusion}

Thus, collecting (\ref{eq:sumdelta}), (\ref{eq:RK}), (\ref{eq:SKsmallt})
and (\ref{eq:SKgrand}) we get
\[
|\chi_{0}\text{cos}(t\sqrt{-\Delta})\delta_{\epsilon,h,N}^{y}|\lesssim\frac{1}{h^{\frac{d+1}{2}}t^{\frac{d-1}{2}}},\text{ for }0\leq t\leq\epsilon|\log h|.
\]
That is (\ref{eq:ultimred}). Thus, \thmref{main} is proved by the
work of reduction of the previous sections.

\section{The non linear problem}

Let us now consider the following defocusing non linear wave equation
in $\mathbb{R}^{3}\backslash\mathcal{K}$
\[
\begin{cases}
\partial_{t}^{2}u-\Delta_{D}u+u^{5}=0\\
(u(0),\partial_{t}u(0))=(f,g).
\end{cases}
\]
$\mathcal{K}$ will be the reunion of two balls, or an illuminated
obstacle as defined in the introduction, and we are concerned by the
scattering problem in both situations. Our main tool will be the following
momentum identity, which was first introduced by Morawetz \cite{Morawetz61}
in a similar form to show some decay properties of the linear wave
equation:
\begin{lem}
\label{lem:moment}Let $u$ be a solution of (NLW) in $\Omega$ and
$\chi\in C^{\text{\ensuremath{\infty}}}(\Omega,\mathbb{R})$. Then
we have
\begin{multline}
\partial_{t}\left(\int_{\Omega}-\partial_{t}u\nabla u\nabla\chi-\frac{1}{2}\Delta\chi u\partial_{t}u\right)=\int_{\Omega}(D^{2}\chi\nabla u,\nabla u)-\frac{1}{4}\int_{\Omega}u^{2}\Delta^{2}\chi\\
+\frac{3}{2}\int_{\Omega}|u|^{6}\Delta\chi-\frac{1}{2}\int_{\partial\Omega}|\partial_{n}u|^{2}\partial_{n}\chi.\label{eq:mor}
\end{multline}
\end{lem}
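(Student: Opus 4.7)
The plan is to recognize this as the standard Morawetz-type multiplier identity obtained by pairing the equation with the first-order operator
\[
M_{\chi}u := \nabla\chi\cdot\nabla u + \tfrac{1}{2}(\Delta\chi)u,
\]
which is designed so that the density under the time derivative on the left-hand side of \eqref{mor} equals exactly $-\partial_{t}u\,M_{\chi}u$. Starting from $\partial_{t}^{2}u=\Delta u-u^{5}$, I would write
\[
\partial_{t}\!\Bigl(-\!\int_{\Omega}\partial_{t}u\,M_{\chi}u\Bigr)=-\!\int_{\Omega}(\Delta u-u^{5})\,M_{\chi}u-\int_{\Omega}\partial_{t}u\,\partial_{t}M_{\chi}u,
\]
and then process the three resulting terms in turn; their sum should assemble to the right-hand side of \eqref{mor}.

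The time cross-term drops out entirely. Expanding $\partial_{t}M_{\chi}u=\nabla\chi\cdot\nabla\partial_{t}u+\tfrac{1}{2}(\Delta\chi)\partial_{t}u$, the integrand becomes $\tfrac{1}{2}\nabla\chi\cdot\nabla|\partial_{t}u|^{2}+\tfrac{1}{2}(\Delta\chi)|\partial_{t}u|^{2}$. An integration by parts on the first summand produces $-\tfrac{1}{2}\!\int\Delta\chi|\partial_{t}u|^{2}$ in the interior plus a boundary integral $\tfrac{1}{2}\!\int_{\partial\Omega}\partial_{n}\chi|\partial_{t}u|^{2}$. The boundary term vanishes because the Dirichlet condition propagates in time to give $\partial_{t}u|_{\partial\Omega}=0$, and the two interior pieces cancel exactly.

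The core computation is $-\!\int\Delta u\,M_{\chi}u$, which I split as $-\!\int\Delta u(\nabla\chi\cdot\nabla u)-\tfrac{1}{2}\!\int\Delta u\,u\Delta\chi$. For the first piece, the pointwise identity $\Delta u\,\partial_{j}u=\partial_{i}(\partial_{i}u\,\partial_{j}u)-\tfrac{1}{2}\partial_{j}|\nabla u|^{2}$ applied with the vector field $X=\nabla\chi$ produces, after one integration by parts, the term $\int D^{2}\chi(\nabla u,\nabla u)$ and a $\tfrac{1}{2}\!\int\Delta\chi|\nabla u|^{2}$ contribution, together with two boundary integrals $\int_{\partial\Omega}(\nabla\chi\cdot\nabla u)(\partial_{n}u)-\tfrac{1}{2}\!\int_{\partial\Omega}\partial_{n}\chi|\nabla u|^{2}$. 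The crucial geometric input is that the Dirichlet condition makes $\nabla u=(\partial_{n}u)n$ on $\partial\Omega$, which collapses these two boundary integrals into the single contribution $-\tfrac{1}{2}\!\int_{\partial\Omega}|\partial_{n}u|^{2}\partial_{n}\chi$ appearing in \eqref{mor}. For the second piece, I would integrate by parts twice, using $\Delta(u\Delta\chi)=\Delta u\,\Delta\chi+2\nabla u\cdot\nabla\Delta\chi+u\Delta^{2}\chi$; every boundary contribution vanishes thanks to $u|_{\partial\Omega}=0$, and the result is $-\tfrac{1}{2}\!\int\Delta\chi|\nabla u|^{2}+\tfrac{1}{4}\!\int u^{2}\Delta^{2}\chi$. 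The $\Delta\chi|\nabla u|^{2}$ terms from the two pieces cancel, isolating the $-\tfrac{1}{4}\!\int u^{2}\Delta^{2}\chi$ that appears in the statement.

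Finally, the nonlinear contribution $\int u^{5}M_{\chi}u$ is a direct computation: writing $u^{5}\nabla u=\tfrac{1}{6}\nabla(u^{6})$, one integration by parts on $\int u^{5}(\nabla\chi\cdot\nabla u)$ gives $-\tfrac{1}{6}\!\int u^{6}\Delta\chi$ (the boundary term again vanishes thanks to $u|_{\partial\Omega}=0$), which combines with the $\tfrac{1}{2}\!\int u^{6}\Delta\chi$ coming from the $\tfrac{1}{2}(\Delta\chi)u$ half of $M_{\chi}u$ to produce the stated $|u|^{6}\Delta\chi$ contribution. The only delicate point in the whole argument is the boundary bookkeeping in the vector-field calculation on the first spatial piece, where one must use crucially that $\nabla u$ is purely normal on $\partial\Omega$ in order to merge the two a priori distinct boundary integrals into the single $|\partial_{n}u|^{2}\partial_{n}\chi$ term; the rest reduces to careful but routine integrations by parts.
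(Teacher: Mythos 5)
Your route is exactly what the paper's one-line proof (``standard integrations by parts justified by a limiting argument'') intends: multiply the equation by the Morawetz multiplier $M_{\chi}u=\nabla\chi\cdot\nabla u+\tfrac12(\Delta\chi)u$, show the time cross-term vanishes by parity and the Dirichlet condition, and split the spatial term into the Hessian/boundary piece and the bilaplacian piece. The key geometric input you flag --- that Dirichlet makes $\nabla u=(\partial_{n}u)n$ on $\partial\Omega$, collapsing the two boundary integrals into $-\tfrac12\int_{\partial\Omega}|\partial_{n}u|^{2}\partial_{n}\chi$ --- is indeed the heart of the boundary bookkeeping. So this is the same proof, just written out.

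Two remarks on the bookkeeping, one minor and one you should not gloss over. Minor: the intermediate $\pm\tfrac12\int\Delta\chi|\nabla u|^{2}$ terms you assign to the two spatial pieces both have flipped signs compared to a careful computation (the piece $-\int\Delta u(\nabla\chi\cdot\nabla u)$ contributes $-\tfrac12\int\Delta\chi|\nabla u|^{2}$ after integration by parts, while $-\tfrac12\int\Delta u\,u\Delta\chi$ contributes $+\tfrac12\int\Delta\chi|\nabla u|^{2}-\tfrac14\int u^{2}\Delta^{2}\chi$). Since you flip both, the cancellation and the final $-\tfrac14\int u^{2}\Delta^{2}\chi$ are unaffected, but as written your stated second-piece result gives $+\tfrac14\int u^{2}\Delta^{2}\chi$, which contradicts your own claim that you ``isolate'' $-\tfrac14\int u^{2}\Delta^{2}\chi$. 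The more substantive point is the nonlinear term: your two pieces are $-\tfrac16\int u^{6}\Delta\chi$ and $+\tfrac12\int u^{6}\Delta\chi$, which sum to $\tfrac13\int u^{6}\Delta\chi$, \emph{not} the $\tfrac32\int|u|^{6}\Delta\chi$ written in the statement. You assert without checking that they ``produce the stated contribution,'' but the arithmetic does not match. In fact the coefficient $\tfrac32$ in the lemma is a typo --- note that the paper itself writes $\tfrac23$ (also wrong) when it reuses the identity in the proof of Lemma~\ref{lem:contrenerg} --- and the correct value from your own calculation is $\tfrac13$. Since all that matters downstream is that the coefficient is positive, nothing breaks, but you should have flagged the discrepancy rather than asserted agreement.
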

\begin{proof}
The identity can be shown by standard integrations by parts justified
by a limiting argument.
\end{proof}

\subsection{A scattering criterion}

The scattering in $\mathbb{R}^{3}$ was shown by Bahouri and Shatah
\cite{BahSha}. Their proof still hold in the case of a domain with
boundaries if we are able to control the boundary term appearing in
their computations, that is
\begin{lem}
\label{lem:bordBS}Let $u$ be a solution of (NLW) in a finite-border
domain $\Omega$ of $\mathbb{R}^{3}$ such that Strichartz estimates
(\ref{eq:striuch}) holds. If
\[
\frac{1}{T}\int_{0}^{T}\int_{\partial\Omega}|\partial_{n}u|^{2}d\sigma dt\longrightarrow0,
\]
as $T$ goes to infinity, then $u$ scatter in $\dot{H}^{1}$.
\end{lem}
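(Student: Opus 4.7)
The idea is to run Bahouri--Shatah's original argument for $\mathbb{R}^{3}$ in our domain, keeping track of the boundary term and invoking the hypothesis exactly where it is needed. The two substantial ingredients of \cite{BahSha} --- the Morawetz-type decay of the non-linear density and the Strichartz bootstrap leading to a finite global scattering norm --- are at our disposal: the first because we do have a Morawetz identity on $\Omega$ (Lemma \ref{lem:moment}), the second because the global Strichartz estimate \eqref{eq:striuch} is granted by hypothesis.

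First I would apply Lemma \ref{lem:moment} with the Morawetz weight $\chi(x)=|x-x_{0}|$ for some $x_{0}$ in the interior of $\mathcal{K}$, so that $\chi$ is smooth on $\overline{\Omega}$. In $\mathbb{R}^{3}$ this weight satisfies $\Delta\chi = 2/|x-x_{0}|\geq 0$, $D^{2}\chi\geq 0$ and $-\Delta^{2}\chi\geq 0$. Integrating \eqref{eq:mor} over $[0,T]$ and dividing by $T$, the left-hand side is a boundary-in-time quantity bounded uniformly in $T$ by the conserved energy $E(u_{0})$ (via Cauchy--Schwarz and the boundedness of $\nabla\chi$ and $\Delta\chi$ on $\Omega$), and so vanishes in the limit. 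The three volume terms on the right are non-negative, so the only potentially dangerous contribution is the boundary term
\[
\frac{1}{2T}\int_{0}^{T}\int_{\partial\Omega}|\partial_{n}u|^{2}\,\partial_{n}\chi\,d\sigma\,dt,
\]
which tends to zero by hypothesis since $\partial_{n}\chi$ is bounded on the compact set $\partial\Omega$. I would conclude
\[
\frac{1}{T}\int_{0}^{T}\int_{\Omega}\frac{|u(x,t)|^{6}}{|x-x_{0}|}\,dx\,dt\longrightarrow 0, \qquad T\to+\infty.
\]

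Once this averaged decay of the non-linear density is in hand, the rest is Bahouri--Shatah almost verbatim. The Cesaro decay forces that along some sequence $T_{n}\to+\infty$ the source $u^{5}$ is small in the dual Strichartz space on each window $[T_{n},T_{n}+1]$; combined with the global Strichartz estimate \eqref{eq:striuch} and a standard perturbative continuity argument this upgrades to a finite global scattering bound $\|u\|_{L^{5}_{t}L^{10}_{x}(\mathbb{R}\times\Omega)}<+\infty$. Scattering in $\dot{H}^{1}(\Omega)$ then follows in the usual way: the Duhamel tail $\int_{t}^{+\infty}\frac{\sin((t-s)\sqrt{-\Delta_{D}})}{\sqrt{-\Delta_{D}}}u^{5}(s)\,ds$ is Cauchy in $\dot{H}^{1}$ by Strichartz applied to the tail of a finite $L^{5}L^{10}$ norm, producing the scattering state $v$.

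The main obstacle is precisely the one the statement removes: for a general obstacle $\mathcal{K}$ the normal derivative $\partial_{n}\chi$ has no definite sign on $\partial\mathcal{K}$, so the boundary term in Lemma \ref{lem:moment} cannot be controlled \emph{a priori}, and the whole geometric difficulty of the problem is concentrated there. The lemma bypasses this by postulating Cesaro control of the boundary flux as a separate hypothesis, deferring the real geometric work to the proofs of Theorems \ref{th2} and \ref{th3}, where the Morawetz weight must be genuinely adapted to the shape of $\mathcal{K}$ (and, in the thin-ellipsoid case, lifted to four dimensions) in order to produce that control.
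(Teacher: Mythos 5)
Your proposal is correct and takes essentially the same route the paper intends: the paper offers no separate proof of this lemma, declaring only that Bahouri--Shatah's argument carries over once the boundary term of the Morawetz identity is controlled, and your sketch (weight $\chi=|x-x_0|$ with $x_0$ interior to $\mathcal{K}$, boundary flux absorbed by the hypothesis, Cesàro decay of $\int\!\!\int |u|^6/|x-x_0|$, then the Strichartz bootstrap of \cite{BahSha}) is precisely that adaptation with the details filled in. The only step you state loosely --- the upgrade from Cesàro decay of the Morawetz density to a finite global scattering norm --- is left equally implicit by the paper through its citation of \cite{BahSha}, so no genuine gap opens between your argument and theirs.
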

Note that the trace of the normal derivative is not an easy object
to deal with, because this trace is a priori not defined in $L^{2}(\partial\Omega)$
for elements of $\dot{H}^{1}(\Omega)$. Moreover, even if we can define
it for almost every $u(t)$ when $u$ is a solution of (NLW) because
of the particular structure of the equation, the application 
\[
u\in\dot{H}^{1}\cap\left\{ \text{value in time \ensuremath{t} of solutions of NLW}\right\} \longrightarrow\partial_{n}u\in L^{2}(\partial\Omega)
\]
 is in our knowledge not known to be continuous. 

For this reason, we prefer to deal with the following criterion, which
involve only the local energy of the equation, and that we deduce
from the previous one using the momentum identity (\ref{eq:mor}):
\begin{lem}
\label{lem:contrenerg}Let $u$ be a solution of (NLW) in a finite-border
domain $\Omega$ of $\mathbb{R}^{3}$ such that Strichartz estimates
(\ref{eq:striuch}) holds. There exists $A>0$, $B(0,A)\supset\partial\Omega$,
such that, if
\begin{equation}
\frac{1}{T}\int_{0}^{T}\int_{\Omega\cap B(0,A)}|\nabla u(x,t)|^{2}+|u(x,t)|^{6}\ dxdt\longrightarrow0,\label{eq:contrenerg}
\end{equation}
as $T$ goes to infinity, then $u$ scatters in $\dot{H}^{1}$.
\end{lem}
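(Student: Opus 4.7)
The plan is to apply the momentum identity \eqref{mor} with a smooth, compactly supported weight $\chi$ whose normal derivative is strictly signed on $\partial\Omega$, transferring the local-energy hypothesis \eqref{contrenerg} into the boundary hypothesis of \lemref{bordBS}, whence scattering. First I fix $A > 0$ with $\partial\Omega \subset B(0,A)$; using that $\partial\Omega$ is smooth and compact, the signed distance $d(\cdot,\partial\Omega)$ is of class $C^\infty$ on a tubular neighborhood $\{0 \leq d < r_0\}$ of $\partial\Omega$ in $\overline{\Omega}$, contained in $B(0,A)$ for $r_0$ small. Picking $\phi \in C_c^\infty([0, r_0))$ with $\phi(0) = 0$ and $\phi'(0) < 0$, I set $\chi(x) := \phi(d(x,\partial\Omega))$ in the neighborhood and $\chi \equiv 0$ elsewhere. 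Then $\chi \in C^\infty(\Omega,\mathbb{R})$ is compactly supported in $\overline{\Omega} \cap B(0,A)$, and since $\nabla d = -n$ on $\partial\Omega$ (with $n$ the unit outward normal to $\Omega$), $\partial_n \chi = -\phi'(0) =: 2c > 0$ on $\partial\Omega$.

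Setting $M(t) := \int_\Omega \bigl(-\partial_t u\, \nabla u \cdot \nabla\chi - \frac{1}{2}\Delta\chi \, u\, \partial_t u\bigr)\, dx$, I integrate \eqref{mor} on $[0,T]$ and isolate the boundary term:
\[
\frac{1}{2}\int_0^T \!\int_{\partial\Omega} |\partial_n u|^2 \partial_n \chi \, d\sigma\, dt = \int_0^T \!\int_\Omega \Bigl[(D^2\chi \nabla u, \nabla u) - \frac{1}{4} u^2 \Delta^2\chi + \frac{3}{2}|u|^6 \Delta\chi \Bigr] dx\, dt - \bigl(M(T) - M(0)\bigr).
\]
All derivatives of $\chi$ are bounded and spatially supported in $\Omega \cap B(0,A)$, so each bulk integrand is pointwise in $t$ dominated by $C(|\nabla u|^2 + u^2 + |u|^6)\mathbf{1}_{\Omega \cap B(0,A)}$. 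The conserved energy $\frac{1}{2}\|\partial_t u\|_{L^2}^2 + \frac{1}{2}\|\nabla u\|_{L^2}^2 + \frac{1}{6}\|u\|_{L^6}^6$ uniformly bounds $\|\partial_t u(t)\|_{L^2}$, $\|\nabla u(t)\|_{L^2}$ and $\|u(t)\|_{L^6}$; Cauchy-Schwarz on the bounded set $\mathrm{supp}\,\chi$, combined with $\|u\|_{L^2(\mathrm{supp}\,\chi)} \lesssim \|u\|_{L^6}$, then yields $\sup_t |M(t)| < \infty$, whence $(M(T)-M(0))/T = O(1/T)$.

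Dividing by $T$, the $|\nabla u|^2$ and $|u|^6$ contributions tend to $0$ directly by \eqref{contrenerg}. For the $u^2$ piece, I first bound $\int_{\Omega \cap B(0,A)} u^2 \lesssim (\int_{\Omega \cap B(0,A)}|u|^6)^{1/3}$ by Hölder on a bounded set, then apply Jensen's inequality to the concave map $s \mapsto s^{1/3}$ to reduce once more to \eqref{contrenerg}. The whole right-hand side therefore tends to $0$, and since $\partial_n \chi \geq 2c > 0$ on $\partial\Omega$, so does $T^{-1}\int_0^T \int_{\partial\Omega}|\partial_n u|^2 \, d\sigma\, dt$. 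An application of \lemref{bordBS} then delivers the scattering.

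The main obstacle is that the natural Morawetz weight $\chi = |x|$, while producing favorable signs for $(D^2\chi \nabla u, \nabla u)$, $|u|^6 \Delta\chi$ and (for star-shaped obstacles) for the boundary term, is \emph{not} compactly supported and therefore creates positive bulk terms on all of $\Omega$ which the \emph{local} hypothesis \eqref{contrenerg} cannot absorb. The compactly supported construction above sidesteps this at the price of producing the sign of $\partial_n \chi$ from the geometry near $\partial\Omega$ alone, which the signed distance supplies without any star-shapedness assumption.
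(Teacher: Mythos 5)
Your proof is correct and follows essentially the same route as the paper: apply the Morawetz identity with a compactly supported weight whose normal derivative is bounded away from zero on $\partial\Omega$, bound the bulk terms in absolute value via the local-energy hypothesis and the conserved energy (using a H\"older/Jensen step in time for the $u^{2}$ piece), and conclude by \lemref{bordBS}. The only cosmetic difference is that the paper posits a compactly supported $\chi$ with $\nabla\chi=-n$ on $\partial\Omega$ without giving the construction, whereas you build it explicitly from the signed distance function in a tubular neighborhood of $\partial\Omega$.
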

\begin{proof}
Let $\chi\in C_{0}^{\infty}(\mathbb{R}^{3},\mathbb{R})$ be such that
$\nabla\chi=-n$ on $\partial\Omega$, supported in $B(0,A)$. Suppose
that
\[
\frac{1}{T}\int_{0}^{T}\int_{\Omega\cap B(0,A)}|\nabla u(x,t)|^{2}+|u(x,t)|^{6}\ dxdt\longrightarrow0
\]
as $T$ goes to infinity. We use \lemref{moment} with the weight
$\chi$ to get:
\begin{multline*}
\partial_{t}\left(\int_{\Omega}-\partial_{t}u\nabla u\nabla\chi-\frac{1}{2}\Delta\chi u\partial_{t}u\right)=\int_{\Omega}(D^{2}\chi\nabla u,\nabla u)-\frac{1}{4}\int_{\Omega}u^{2}\Delta^{2}\chi\\
+\frac{2}{3}\int_{\Omega}|u|^{6}\Delta\chi+\frac{1}{2}\int_{\partial\Omega}|\partial_{n}u|^{2}d\sigma.
\end{multline*}
Integrating in time we get
\begin{multline*}
\int_{0}^{T}\int_{\partial\Omega}|\partial_{n}u|^{2}d\sigma dt\lesssim\int_{\Omega\cap B(0,A)}|\partial_{t}u\nabla u|+|u\partial_{t}u|+\int_{0}^{T}\int_{\Omega\cap B(0,A)}|u|^{6}+|u|^{2}+|\nabla u|^{2},
\end{multline*}
and using Minkowsky inequality,
\begin{multline*}
\int_{0}^{T}\int_{\partial\Omega}|\partial_{n}u|^{2}d\sigma dt\lesssim\left(\int_{\Omega}|\partial_{t}u|^{2}\right)^{\frac{1}{2}}\left(\int_{\Omega}|\nabla u|^{2}\right)^{\frac{1}{2}}+A^{\frac{1}{3}}\left(\int_{\Omega}|\partial_{t}u|^{2}\right)^{\frac{1}{2}}\left(\int_{\Omega}|u|^{6}\right)^{\frac{1}{6}}\\
+\int_{0}^{T}\int_{\Omega\cap B(0,A)}\left(|u|^{6}+|\nabla u|^{2}\right)+A^{\frac{2}{3}}\int_{0}^{T}\left(\int_{\Omega\cap B(0,A)}|u|^{6}\right)^{\frac{1}{3}}\\
\lesssim_{A}C(E)+\int_{0}^{T}\int_{\Omega\cap B(0,A)}(|u|^{6}+|\nabla u|^{2})+T^{\frac{2}{3}}\left(\int_{0}^{T}\int_{\Omega\cap B(0,A)}|u|^{6}\right)^{\frac{1}{3}}.
\end{multline*}
 Thus
\begin{multline*}
\frac{1}{T}\int_{0}^{T}\int_{\partial\Omega}|\partial_{n}u|^{2}d\sigma dt\lesssim_{A}\frac{C(E)}{T}+\frac{1}{T}\int_{0}^{T}\int_{\Omega\cap B(0,A)}(|u|^{6}+|\nabla u|^{2})\\
+\left(\frac{1}{T}\int_{0}^{T}\int_{\Omega\cap B(0,A)}|u|^{6}\right)^{\frac{1}{3}}\longrightarrow0
\end{multline*}
as $T\rightarrow\infty$ and by \lemref{bordBS} we conclude that
$u$ scatter in $\dot{H}^{1}$.
\end{proof}
Notice that the Morawetz identity (\ref{eq:mor}) permits to obtain
this criterion as soon as one has a weight function $\chi$ such that
$\nabla\chi\cdot n\geq0$ on $\partial\Omega$, $D^{2}\chi$ is positive
definite, and $\Delta^{2}\chi\leq0$. Constructing such weights will
therefore be of key interest in the sequel.

\subsection{A partial result in the exterior of two balls}

In the exterior of two balls, \lemref{moment} seems not to be sufficient
to show the scattering criterion (\ref{eq:contrenerg}) because we
are not able to find an appropriate weight funtion. However, we can
choose a weight function which has the right behavior everywhere except
in a neighborhood of the trapped ray, and therefore obtain Theorem
\ref{th2}, which is a first step toward the scattering for all data:
it is extended to the exterior of two convex obstacles and used to
show the scattering in this framework in the work in progress \cite{LafLaurent}.
\begin{proof}[Proof of Theorem \ref{th2}.]
Without loss of generality, we suppose that $\Theta_{1}$ is centered
in $0$. We denote by $c$ the center of $\Theta_{2}$. We choose
the weight
\[
\chi(x):=|x|+|x-c|
\]
and use \lemref{moment} with weight $\chi$
\begin{multline*}
\partial_{t}\left(\int_{\Omega}-\partial_{t}u\nabla u\nabla\chi-\frac{1}{2}\Delta\chi u\partial_{t}u\right)=\int_{\Omega}(D^{2}\chi\nabla u,\nabla u)-\frac{1}{4}\int_{\Omega}u^{2}\Delta^{2}\chi\\
+\frac{2}{3}\int_{\Omega}|u|^{6}\Delta\chi-\frac{1}{2}\int_{\partial\Omega}|\partial_{n}u|^{2}\nabla\chi\cdot n\ d\sigma.
\end{multline*}
 Remark that $-\nabla\chi\cdot n\geq0$ on $\partial\Theta_{1}\cup\partial\Theta_{2}$:
indeed, on $\Theta_{1}$, $-n=\frac{x}{|x|}$ and thus
\[
-\nabla\chi\cdot n=1+\frac{x-c}{|x-c|}\cdot\frac{x}{|x|}\geq1-\left|\frac{x-c}{|x-c|}\right|\left|\frac{x}{|x|}\right|=0,
\]
 and the same hold on $\Theta_{2}$. Moreover, $\Delta^{2}\chi=0$.
Thus we obtain
\begin{equation}
\partial_{t}\left(-\int_{0}^{T}\partial_{t}u\nabla u\cdot\nabla\chi+\frac{1}{2}\Delta\chi u\partial_{t}u\right)\geq\frac{2}{3}\int_{\Omega}|u|^{6}\Delta\chi+\int_{\Omega}(D^{2}\chi\nabla u,\nabla u)\label{eq:morcontr}
\end{equation}
Integrating this inequality and controling the left-hand side using
the Hardy inequality 
\[
\int_{\Omega}\frac{|f|^{2}}{|x|^{2}}\lesssim\int_{\Omega}|\nabla f|^{2}\ \text{for }f\in\dot{H}_{0}^{1}(\Omega)
\]
 we get
\begin{equation}
\int_{0}^{T}\int_{\Omega}|u|^{6}\Delta\chi+(D^{2}\chi\nabla u,\nabla u)\ dxdt\lesssim E.\label{eq:eq1}
\end{equation}

From the one hand, $\Delta\chi\gtrsim\frac{1}{A}$ on $B(0,A)$, thus
\[
\int_{\Omega\cap B(0,A)}|u|^{6}\lesssim A\int_{\Omega\cap B(0,A)}|u|^{6}\Delta\chi\lesssim A\int_{\Omega}|u|^{6}\Delta\chi,
\]
and therefore, by (\ref{eq:eq1})
\begin{equation}
\frac{1}{T}\int_{0}^{T}\int_{\Omega\cap B(0,A)}|u|^{6}\ dxdt\lesssim\frac{E}{T}.\label{eq:decayl6}
\end{equation}

Now, we would like to estimate the localised cinetic energy using
(\ref{eq:eq1}) again. We have
\[
D^{2}\chi=\frac{1}{|x|}\left(\text{Id}-\frac{xx^{t}}{|x|^{2}}\right)+\frac{1}{|x-c|}\left(\text{Id}-\frac{(x-c)(x-c)^{t}}{|x-c|^{2}}\right).
\]
The operators corresponding to the matrices 
\[
\text{Id}-\frac{xx^{t}}{|x|^{2}},\text{ resp. }\text{Id}-\frac{(x-c)(x-c)^{t}}{|x-c|^{2}}
\]
are the orthogonal projections on the plane normal to $\frac{x}{|x|}$,
resp. to $\frac{x-c}{|x-c|}$. Thus, 
\begin{equation}
(D^{2}\chi\cdot\xi,\xi)=\left(\frac{1}{|x|}+\frac{1}{|x-c|}\right)|\xi|^{2}-\frac{1}{|x|}\left(\xi\cdot\frac{x}{|x|}\right)^{2}-\frac{1}{|x-c|}\left(\xi\cdot\frac{x-c}{|x-c|}\right)^{2}.\label{eq:d2xi1}
\end{equation}
We choose coordinates (depending of $x$ and $c$) such that 
\[
\frac{x}{|x|}=(1,0,0),\ \frac{x-c}{|x-c|}=(\cos\theta,\sin\theta,0),
\]
then we have, if $\xi=\begin{pmatrix}\hat{\xi}_{1} & \hat{\xi}_{2} & \hat{\xi}_{3}\end{pmatrix}$
in this set of coordinates
\[
\frac{1}{|x|}\left(\xi\cdot\frac{x}{|x|}\right)^{2}+\frac{1}{|x-c|}\left(\xi\cdot\frac{x-c}{|x-c|}\right)^{2}=\begin{pmatrix}\hat{\xi}_{1} & \hat{\xi}_{2}\end{pmatrix}\begin{pmatrix}\frac{1}{|x|}+\frac{\cos^{2}\theta}{|x-c|} & \frac{\sin\theta\cos\theta}{|x-c|}\\
\frac{\sin\theta\cos\theta}{|x-c|} & \frac{\sin^{2}\theta}{|x-c|}
\end{pmatrix}\begin{pmatrix}\hat{\xi}_{1}\\
\hat{\xi}_{2}
\end{pmatrix}.
\]
The largest eigenvalue of this positive quadratic form in $\begin{pmatrix}\hat{\xi}_{1} & \hat{\xi}_{2}\end{pmatrix}$
writes
\[
\lambda_{2}=\frac{1}{2}\left(\frac{1}{|x-c|}+\frac{1}{|x|}+\sqrt{\left(\frac{1}{|x-c|}+\frac{1}{|x|}\right)^{2}-4\frac{\sin^{2}\theta}{|x||x-c|}}\right)
\]
therefore, there exists $\alpha_{0}>0$ small enought and $c>0$ such
that, if $\alpha\leq\alpha_{0}$, we have, for $x\in\Omega\cap B(0,A)$
\begin{equation}
\sin^{2}\theta\geq\alpha\implies\lambda_{2}\leq\frac{1}{|x-c|}+\frac{1}{|x|}-c\alpha.\label{eq:d2xi2}
\end{equation}
On the other hand
\[
\frac{1}{|x|}\left(\xi\cdot\frac{x}{|x|}\right)^{2}+\frac{1}{|x-c|}\left(\xi\cdot\frac{x-c}{|x-c|}\right)^{2}\leq\lambda_{2}|(\hat{\xi}_{1},\hat{\xi}_{2})|^{2}\leq\lambda_{2}|\xi|^{2},
\]
thus we get, combining this last inequality with (\ref{eq:d2xi1})
and (\ref{eq:d2xi2}), for $x\in\Omega\cap B(0,A)$
\begin{equation}
\sin^{2}\theta\geq\alpha\implies(D^{2}\chi\cdot\xi,\xi)\gtrsim\alpha|\xi|^{2}.\label{eq:contrang}
\end{equation}
Remark that, because $\theta$ is the angle between $\frac{x}{|x|}$
and $\frac{x-c}{|x-c|}$
\[
\theta=\arccos\frac{x}{|x|}\cdot\frac{x-c}{|x-c|},
\]
and let us denote, for $\alpha\leq\alpha_{0}$ 
\[
V(\alpha)=\Omega\cap B(0,A)\cap\left\{ \sin^{2}\left(\arccos\frac{x}{|x|}\cdot\frac{x-c}{|x-c|}\right)\geq\alpha\right\} .
\]

\begin{figure}
\includegraphics[scale=0.7]{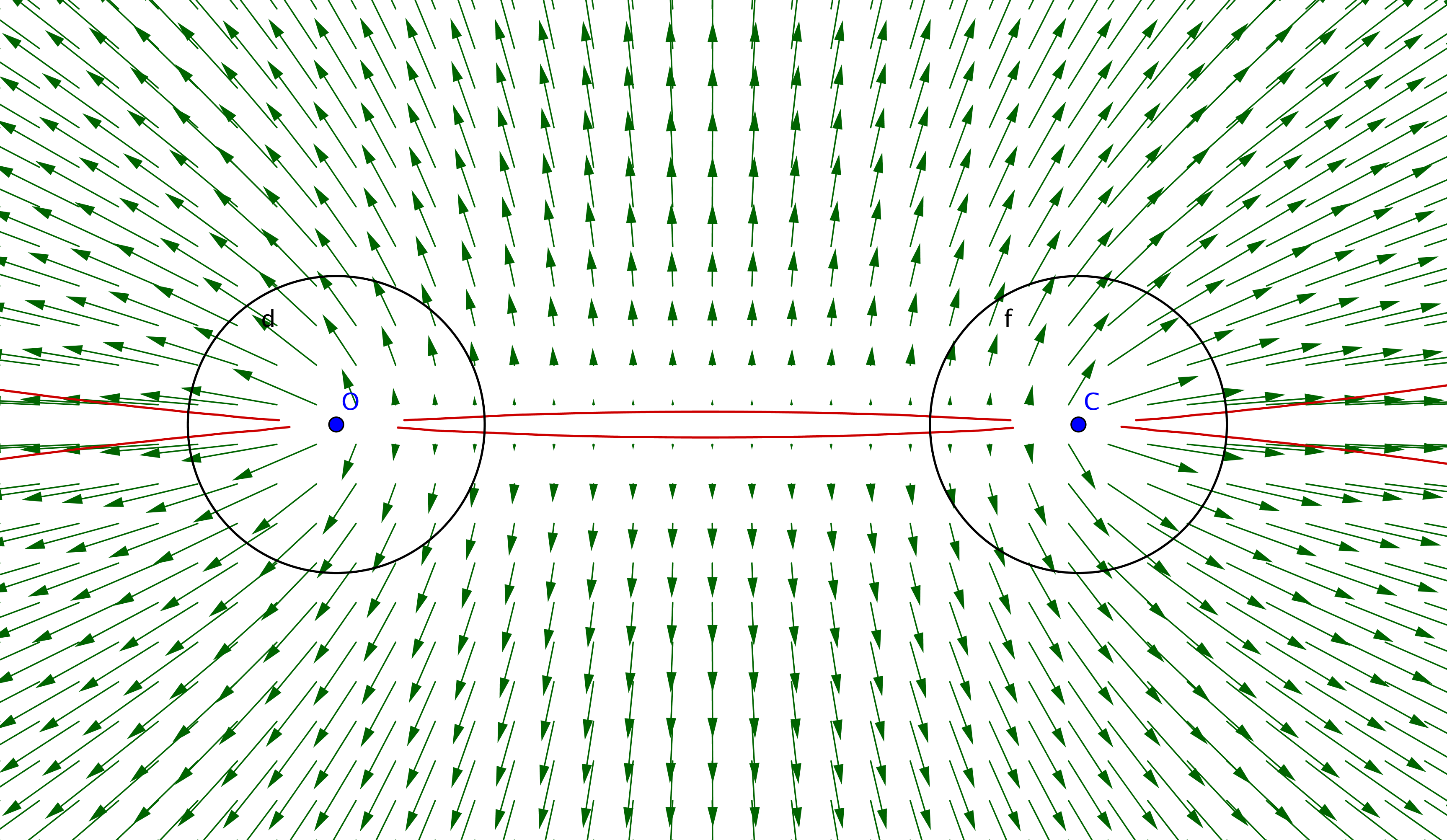}\caption{$\nabla\chi$ and $V(\alpha)$}
\end{figure}

Note that $V(\alpha)\rightarrow\Omega\cap B(0,A)$ as $\alpha$ goes
to zero in the sense that, denoting $\mu$ the Lebesgue's mesure on
$\mathbb{R}^{3}$
\begin{equation}
\mu\left((\Omega\cap B(0,A))\backslash V(\alpha)\right)\longrightarrow0\label{eq:set0}
\end{equation}
as $\alpha$ goes to zero: $V(\alpha)$ is a subset of $\Omega\cap B(0,A)$
excluding a small neighborhood of the line $(0,c)$. We have, on $S(\alpha)$,
because of (\ref{eq:contrang})
\[
(D^{2}\chi\cdot\xi,\xi)\gtrsim\alpha|\xi|^{2}.
\]
Thus we get
\[
\int_{\Omega}(D^{2}\chi\nabla u,\nabla u)\geq\int_{V(\alpha)}(D^{2}\chi\nabla u,\nabla u)\gtrsim\alpha\int_{V(\alpha)}|\nabla u|^{2}
\]
and by (\ref{eq:eq1}) we obtain
\[
\frac{1}{T}\int_{0}^{T}\int_{V(\alpha)}|\nabla u|^{2}\ dxdt\lesssim\frac{1}{\alpha}\frac{E}{T}.
\]
We take $\alpha=T^{-1/2}$ in order to have
\begin{equation}
\frac{1}{T}\int_{0}^{T}\int_{V(\alpha(T))}|\nabla u|^{2}\ dxdt\lesssim\frac{E}{\sqrt{T}}.\label{eq:decaygrad}
\end{equation}

Choosing $\mathcal{S}(T):=B(0,A)\backslash V(\alpha(T))$, (\ref{eq:decayl6})
together with (\ref{eq:decaygrad}) gives the result.
\end{proof}

\subsection{Obstacles illuminated by an ellipsoïd}

Motivated by the above result, we are interested by the scattering
problem in non trapping geometries close to the exterior of two convex
obstacles, such as dog bones with arbitrary thin neck. Theorem \ref{th3},
which we will prove now, gives in particular the scattering in such
settings. More precisely, it permits to handle obstacles illuminated
by arbitary cigar-shaped ellipsoïds and a certain class of flat-shaped
ones. 

In order to show such a result using the Morawetz identity (\ref{eq:mor})
to obtain the criterion of \lemref{contrenerg}, it is natural to
choose the gauge of the ellipsoïd we are dealing with as the weight
function. The next lemma gives us the range of ellipsoïds for which
such a weight verify the bilaplacian constraint:
\begin{lem}
\label{calcbile}Let $n\geq2$ and
\[
\rho(x)=\sqrt{x_{1}^{2}+\cdots+x_{k}^{2}+\epsilon(x_{k+1}^{2}+\cdots+x_{n}^{2})}.
\]
Then,
\[
\Delta^{2}\rho\leq0,\ \forall\epsilon\in[\epsilon_{0},1]
\]
with 
\[
\epsilon_{0}=\begin{cases}
0 & \text{if }k\geq3,\\
\frac{1}{n}+\frac{\sqrt{2(n-2)(n-1)}}{n(n-2)} & \text{if }k=2,\\
\frac{4}{n+1} & \text{if }k=1.
\end{cases}
\]
\end{lem}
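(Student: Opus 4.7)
The plan is a direct, three-stage computation. First, I exploit the $\mathrm{O}(k) \times \mathrm{O}(n-k)$ invariance of $\rho$: it depends only on $r_1 = \sqrt{x_1^2 + \cdots + x_k^2}$ and $r_2 = \sqrt{x_{k+1}^2 + \cdots + x_n^2}$, on which the Laplacian decomposes as
\[
\Delta u = \partial_{r_1}^{2} u + \frac{k-1}{r_1}\partial_{r_1} u + \partial_{r_2}^{2} u + \frac{n-k-1}{r_2}\partial_{r_2} u.
\]
Setting $a = r_1^2$, $b = r_2^2$, so that $\rho^2 = a + \epsilon b$, a short computation yields
\[
\Delta \rho \;=\; \frac{S - 1}{\rho} \,+\, \frac{B\, b}{\rho^{3}}, \qquad S := k + \epsilon(n-k), \quad B := \epsilon(1-\epsilon).
\]

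Second, I apply $\Delta$ a second time. Since $1/\rho$ is itself a function of $\rho$, one obtains $\Delta(1/\rho) = (3-S)/\rho^3 - 3Bb/\rho^5$; an analogous but slightly longer computation gives $\Delta(b/\rho^3)$. Combining the two and repeatedly using the identity $a + \epsilon^2 b = \rho^2 - Bb$ to clear all $a$-dependence from the numerators, one reaches the compact form
\[
\rho^{7}\,\Delta^{2}\rho \;=\; \alpha(\epsilon,k,n)\,\rho^{4} \,+\, 6B\,\beta(\epsilon,k,n)\, b\,\rho^{2} \,-\, 15\, B^{2}\, b^{2},
\]
with $\alpha := -(S-1)(S-3) + 2(n-k)B$ and $\beta := 3 - k - \epsilon(n-k+2)$.

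Third, I analyze the sign. Introducing $T := b/\rho^2 \in [0, 1/\epsilon]$, the inequality $\Delta^2 \rho \leq 0$ is equivalent to $f(T) := -15B^2 T^2 + 6B\beta\, T + \alpha \leq 0$ on $[0, 1/\epsilon]$, a downward-opening parabola (for $0<\epsilon<1$) with vertex $T_{\star} = \beta/(5B)$. The crucial observation is that $\beta \leq 0$ in every regime of interest: automatic for $k \geq 3$ (since $3-k \leq 0$), equivalent to $\epsilon \geq 1/n$ when $k = 2$, and to $\epsilon \geq 2/(n+1)$ when $k = 1$. Whenever $\beta \leq 0$ the vertex $T_{\star}$ lies in $(-\infty, 0]$, so $f$ is decreasing on $[0, \infty)$, and the inequality $f \leq 0$ on $[0, 1/\epsilon]$ collapses to the single pointwise condition $\alpha \leq 0$. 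It then remains to solve $\alpha(\epsilon,k,n) \leq 0$ in each case: for $k \geq 3$ one checks that $(S-1)(S-3) \geq 2(k-2)\epsilon(n-k) \geq 2(n-k)B$, so the inequality holds for every $\epsilon \in (0,1]$; for $k = 2$ it becomes the quadratic inequality $n(n-2)\epsilon^2 - 2(n-2)\epsilon - 1 \geq 0$, whose positive root is the announced $\epsilon_0$; and for $k = 1$ the factorization $\alpha = \epsilon(n-1)\bigl[4 - \epsilon(n+1)\bigr]$ gives $\epsilon_0 = 4/(n+1)$, which automatically satisfies the side condition $\epsilon_0 \geq 2/(n+1)$.

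The main obstacle is the second, purely algebraic step: obtaining the compact polynomial form of $\rho^7 \Delta^2\rho$ requires carefully assembling the $1/\rho^3$, $1/\rho^5$ and $1/\rho^7$ contributions coming from $\Delta(1/\rho)$ and $\Delta(b/\rho^3)$, and systematically using $a+\epsilon^2 b = \rho^2 - Bb$ to absorb every $a$ into a power of $\rho$. Once that form is secured, the remaining sign analysis is an elementary one-variable convex-analytic argument, and the three values of $\epsilon_0$ fall out as roots of quadratics in $\epsilon$.
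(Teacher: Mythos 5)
Your proof is correct and follows essentially the same route as the paper: compute $\Delta^{2}\rho$ explicitly as $\rho^{-7}$ times a polynomial in $\rho^{2}$ and $b=x_{k+1}^{2}+\cdots+x_{n}^{2}$, then check the signs of the numerator coefficients. Your $\alpha$, $6B\beta$ and $-15B^{2}$ agree exactly with the paper's $A$, $B$, $C$, and the parabola framing is logically equivalent to the paper's termwise-nonpositivity check (once $\alpha\le0$ and $\beta\le0$ each term of $f(T)$ is nonpositive on $T\ge0$), so the case analysis and the three values of $\epsilon_{0}$ come out identically.
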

\begin{proof}
An elemantary computation gives
\[
\Delta^{2}\rho(x)=\frac{A(\epsilon)}{\rho^{3}}+\frac{B(\epsilon)(x_{k+1}^{2}+\cdots+x_{n}^{2})}{\rho^{5}}+\frac{C(\epsilon)(x_{k+1}^{2}+\cdots+x_{n}^{2})^{2}}{\rho^{7}}
\]
where
\begin{gather*}
A(\epsilon)=-(n-k+2)(n-k)\epsilon^{2}-2(n-k)(k-3)\epsilon-(k-1)(k-3),\\
B(\epsilon)=6\epsilon((n-k+2)\epsilon^{2}+[(2k-n)-5]\epsilon-k+3),\\
C(\epsilon)=-15\epsilon^{2}(\epsilon-1)^{2}.
\end{gather*}
In all cases, $C(\epsilon)\le0$. 

If $k\geq3$, it is clear that $A(\epsilon)\leq0$. Let us denote
$\tilde{B}(\epsilon)=\frac{B(\epsilon)}{6\epsilon}=(n-k+2)\epsilon^{2}+[(2k-n)-5]\epsilon-k+3$
. Then $\tilde{B}(0)=-(k-3)\leq0$, $\tilde{B}(1)=0$, so $B(\epsilon)\leq0$
for $\epsilon\in[0,1]$ and therefore $\Delta^{2}\rho\leq0$ for $\epsilon\in[0,1]$. 

If $k=2$, the roots of $A(\epsilon)=-n(n-2)\epsilon^{2}+2(n-2)\epsilon+1$
are $\epsilon_{1,2}=\frac{1}{n}\pm\frac{\sqrt{2(n-2)(n-1)}}{n(n-2)}$
and the roots of $\tilde{B}(\epsilon)=n\epsilon^{2}-(n+1)\epsilon+1$
are $\frac{1}{n}$ and $1$, so $\Delta^{2}\rho\leq0$ for $\epsilon\in[\frac{1}{n}+\frac{\sqrt{2(n-2)(n-1)}}{n(n-2)},1]$.

If $k=1$, $A(\epsilon)=-(n+1)(n-1)\epsilon^{2}+4(n-1)\epsilon$ is
non-positive if and only if $\epsilon\geq\frac{4}{n+1}$, and the
roots of $\tilde{B}(\epsilon)=(n+1)\epsilon^{2}-(n+3)\epsilon+2$
are $\frac{2}{n+1}$ and $1$, so $\Delta^{2}\rho\leq0$ for all $\epsilon$
in $[\frac{4}{n+1},1]$.
\end{proof}
Notice that in dimension three, this weight cannot be explicitly used
as his bilaplacian is not non positive. Hence, to derive the control
(\ref{eq:contrenerg}), we will extend $u$ as a solution of a four
dimensional non linear wave equation, get the control for this extended
solution, and manage to go back to $u$. 

To this purpose, we need the following existence result for the four
dimensional problem:
\begin{lem}
\label{ext}Let $\Omega$ be a smooth domain of $\mathbb{R}^{4}$
with compact boundary, and $(u_{0},u_{1})\in\dot{H}^{\frac{7}{4}}(\Omega)\times H^{\frac{3}{4}}(\Omega)$.
Then, there exists a unique global solution of 

\begin{equation}
\begin{cases}
(\partial_{t}^{2}-\Delta)u+u^{5}=0\quad\text{in }\mathbb{R}\times\Omega\\
u_{|t=0}=u_{0},\quad\partial_{t}u_{|t=0}=u_{1},\quad u_{|\mathbb{R}\times\partial\Omega}=0
\end{cases}\label{eq:nlw-1-1}
\end{equation}
satisfying
\[
u\in C(\mathbb{R},\dot{H}^{\frac{7}{4}}(\Omega)\cap L^{6}(\Omega))\cap C^{1}(\mathbb{R},H^{\frac{3}{4}}(\Omega))\cap L^{48}(\mathbb{R},L^{6}(\Omega)).
\]
\end{lem}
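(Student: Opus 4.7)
The plan is to establish local well-posedness in a Strichartz-type space containing the regularity $\dot H^{7/4}(\Omega)\times H^{3/4}(\Omega)$, and then to promote the local solution to a global one using conservation of the defocusing energy together with a Morawetz-type monotonicity to control the global $L^{48}(\mathbb{R};L^6)$ Strichartz norm. Concretely, I would first write the solution in Duhamel form
\begin{equation*}
u(t) = \cos(t\sqrt{-\Delta_D})u_0 + \frac{\sin(t\sqrt{-\Delta_D})}{\sqrt{-\Delta_D}} u_1 - \int_0^t \frac{\sin((t-s)\sqrt{-\Delta_D})}{\sqrt{-\Delta_D}} u(s)^5\, ds,
\end{equation*}
and apply the Strichartz estimates for the $4$D wave equation outside a compact obstacle (Blair--Smith--Sogge, or the sharper Smith--Sogge estimates in the non-trapping case) at the regularity level $\gamma = 7/4$. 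Sobolev embedding in four dimensions and a fractional chain rule provide the bounds $\|u^5\|_{L^{48/5}_t L^{6/5}_x}\lesssim \|u\|_{L^{48}_t L^6_x}^5$ and its analogue at $3/4$ derivatives, so that a standard contraction mapping argument in
\begin{equation*}
X_I = C(I;\dot H^{7/4}\cap L^6)\cap C^1(I;H^{3/4})\cap L^{48}(I;L^6)
\end{equation*}
produces a unique local solution on an interval $I$ whose length depends only on the $\dot H^{7/4}\times H^{3/4}$ norm of the data.

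To promote this to a global solution, I would exploit conservation of the defocusing energy
\begin{equation*}
E(u)(t) = \int_\Omega \tfrac12|\partial_t u|^2 + \tfrac12|\nabla u|^2 + \tfrac16|u|^6,
\end{equation*}
which controls $\|u(t)\|_{\dot H^1(\Omega)\cap L^6(\Omega)}+\|\partial_t u(t)\|_{L^2(\Omega)}$ uniformly in $t$. By the blow-up alternative provided by the local theory it then suffices to establish an a priori bound on $\|u\|_{L^{48}(\mathbb{R};L^6)}$. I would do so by splitting $\mathbb{R}$ into finitely many sub-intervals on which the Strichartz norm is small enough to close the contraction, using a global Morawetz-type inequality in the spirit of Lemma~\ref{lem:moment} with a weight whose bilaplacian is nonpositive in $\mathbb{R}^4$---for instance the gauge of a four-dimensional ellipsoid, which falls within the range covered by Lemma~\ref{calcbile}---to bound the total number of such sub-intervals in terms of the conserved energy. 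Persistence of the full $\dot H^{7/4}$ regularity along the flow is then a standard Strichartz bootstrap once the $L^{48}L^6$ norm is known to be finite.

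The main obstacle is that in dimension four the nonlinearity $u^5$ is \emph{energy-supercritical} (the scaling-critical regularity being $\dot H^{3/2}$), so conservation of energy alone does not prevent critical-or-higher norms from blowing up. The delicate point is therefore the global bootstrap of $\|u\|_{L^{48}_tL^6_x}$: one must genuinely exploit the extra quarter-derivative gained by placing the data in $\dot H^{7/4}$ rather than $\dot H^{3/2}$, together with the monotonicity provided by the Morawetz identity, in order to close the estimate. Uniqueness follows by applying the same contraction estimate to the difference of two candidate solutions in $X_I$, a standard Gronwall-type argument handling the long-time extension.
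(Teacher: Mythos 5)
Your local-existence step matches the paper's: Blair--Smith--Sogge Strichartz estimates in the exterior domain combined with a contraction mapping in the space $X_T$, with the existence time depending on the $\dot H^{7/4}\times H^{3/4}$ size of the data. The paper's treatment of the passage from local to global is a single sentence, ``the global existence result follows using the energy conservation law,'' and you are right to be suspicious of it. In $\mathbb{R}^4$ the quintic power is energy-supercritical --- the scaling-critical regularity is $s_c=3/2$, strictly above the energy level $\dot H^1$ --- and the local existence time depends on $\|u_0\|_{\dot H^{7/4}}+\|u_1\|_{H^{3/4}}$, which is not a conserved quantity. Conservation of $E$ therefore does not, by itself, prevent the existence time from shrinking to zero on iteration; the paper's one-line global step simply does not address this.

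The trouble is that your proposed repair does not close the gap either. A Morawetz identity with a four-dimensional convex weight as in Lemma~\ref{calcbile} produces a weighted spacetime bound of the form $\iint |u|^{6}/|(x,z)|\lesssim E$, which lives at the energy level $\dot H^1\times L^2$. Upgrading such a bound to an a priori global control of a Strichartz norm associated to the regularity $\dot H^{7/4}$ (or even $\dot H^{3/2}$) is exactly the step that makes unconditional global regularity for energy-supercritical nonlinear waves an open problem already in the free space $\mathbb{R}^4$; ``split $\mathbb{R}$ into finitely many intervals and use the extra quarter-derivative'' is not a bootstrap one can run without a quantitative input beyond $E$. So the gap you identified is genuine, but your Morawetz-based fix is, at present, a restatement of the difficulty rather than a resolution of it. As a practical matter, the lemma is invoked in Proposition~\ref{ln} only on a fixed interval $[0,T]$ for smooth, compactly supported (and, by finite speed of propagation, essentially cylindrical) data; a statement tailored to that use, rather than unconditional global well-posedness in $\dot H^{7/4}\times H^{3/4}$, would be the honest scope of what the local theory plus energy conservation actually deliver.

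Finally, a minor but real point: the triple $(p,q,\gamma)=(48,6,7/4)$ that you (following the paper) feed into the Strichartz estimate does not satisfy the wave scaling relation $1/p+d/q=d/2-\gamma$ in $d=4$ (the left side is $33/48$, the right side is $12/48$). If you use this triple you should note that you are invoking a non-sharp estimate and track where the lost derivatives go; better is to pick a genuinely admissible pair at level $7/4$, e.g.\ $(p,q)=(6,48)$.
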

\begin{proof}
Let $0<T<1$. By the work of \cite{MR2566711}, Theorem 1.1, applied
to the admissible triple $(p=48,q=6,\gamma=\frac{7}{4})$ in dimension
$4$, if $u$ is solution of 
\begin{equation}
\begin{cases}
(\partial_{t}^{2}-\Delta)u=F\quad\text{in }(0,T)\times\Omega\\
u_{|t=0}=f,\quad\partial_{t}u_{|t=0}=g,\quad u_{|\mathbb{R}\times\partial\Omega}=0
\end{cases}\label{eq:nlw-1-1-1}
\end{equation}
then the following Strichartz estimate holds
\[
\Vert u\Vert_{L^{48}((0,T),L^{6}(\Omega))}\leq C\left(\Vert f\Vert_{\dot{H}^{\frac{7}{4}}(\Omega)}+\Vert g\Vert_{H^{\frac{3}{4}}(\Omega)}+\Vert F\Vert_{L^{1}((0,T),L^{2}(\Omega))}\right).
\]

Using this estimate, we obtain the local existence following a classical
fixed point method, in the space
\[
X_{T}=C^{0}(\mathbb{R},\dot{H}^{\frac{7}{4}}(\Omega)\cap L^{6}(\Omega))\cap C^{1}(\mathbb{R},H^{\frac{3}{4}}(\Omega))\cap L^{48}(\mathbb{R},L^{6}(\Omega))
\]
for $T$ sufficently small depending of $\Vert u_{0}\Vert_{\dot{H}^{\frac{7}{4}}(\Omega)}$
and $\Vert u_{1}\Vert_{H^{\frac{3}{4}}(\Omega)}$. The global existence
result follows using the energy conservation law.
\end{proof}
We are now in position to show:
\begin{prop}
\label{ln}Let $\mathcal{C}\subset\mathbb{R}^{3}$ be the ellipsoïd
of equation (\ref{eq:el1}), resp. (\ref{eq:el2}), $\mathcal{K}$
be a compact subset of $\mathbb{R}^{3}$ illuminated by $\mathcal{C}$
and $\Omega:=\mathbb{R}^{3}\backslash\mathcal{K}$. Let $u\in C(\mathbb{R},\dot{H}^{1}(\Omega))\cap C^{1}(\mathbb{R},L^{2}(\Omega))$
be the global solution of (\ref{eq:nlw}) in $\Omega$. Then

\[
\frac{1}{T}\int_{0}^{T}\int_{\Omega\cap B(0,A)}|\nabla u(x,t)|^{2}+|u(x,t)|^{6}\ dxdt\leq\frac{1}{\ln T}C(E(u)).
\]
\end{prop}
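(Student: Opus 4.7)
I would follow the four-dimensional detour announced in the introduction: extend $u$ to a four-dimensional solution $\tilde{u}$ of the nonlinear wave equation, apply the Morawetz identity of Lemma~\ref{lem:moment} in dimension four with the gauge $\tilde{\rho}$ of a four-dimensional ellipsoid whose bilaplacian is non-positive, then restrict back to a three-dimensional slice.

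\emph{Setting up the four-dimensional ellipsoid.} Starting from $\mathcal{C}$, I would augment it by a fourth coordinate of coefficient one. In case (\ref{eq:el1}), the resulting ellipsoid $x_{1}^{2}+x_{2}^{2}+x_{4}^{2}+\epsilon x_{3}^{2}=1$ falls in the case $k=3$, $n=4$ of Lemma~\ref{calcbile}, where $\epsilon_{0}=0$ and thus $\Delta^{2}\tilde{\rho}\leq0$ for every $\epsilon\in(0,1]$. In case (\ref{eq:el2}), the ellipsoid $x_{1}^{2}+x_{4}^{2}+\epsilon(x_{2}^{2}+x_{3}^{2})=1$ falls in the case $k=2$, $n=4$, whose threshold is $\epsilon_{0}=\tfrac{1}{4}+\tfrac{\sqrt{12}}{8}=(1+\sqrt{3})/4$ --- exactly the hypothesis on $\epsilon$. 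Set $\tilde{\mathcal{K}}:=\mathcal{K}\times[-L,L]$ for a length $L>0$ to be chosen. Since $\mathcal{K}$ is illuminated by $\mathcal{C}$ in the sense of Definition~\ref{defill}, $\tilde{\mathcal{K}}$ is illuminated by the four-dimensional ellipsoid, so $-\nabla\tilde{\rho}\cdot n\geq0$ on $\partial\tilde{\mathcal{K}}$.

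\emph{Producing $\tilde{u}$ and applying Morawetz.} Using Lemma~\ref{ext} on $\tilde{\Omega}:=\mathbb{R}^{4}\setminus\tilde{\mathcal{K}}$, construct $\tilde{u}$ from a smoothed approximation of $(f,g)$ tensored with a cutoff $\varphi(x_{4}/L)$, using a density argument to reach the regularity $\dot{H}^{7/4}\times H^{3/4}$ required there. By finite speed of propagation, on the slab $|x_{4}|<L-t$ the solution $\tilde{u}$ coincides with $u\otimes\varphi$, so that any three-dimensional slice inside this slab records the original~$u$. Apply Lemma~\ref{lem:moment} to $\tilde{u}$ in four dimensions with weight $\tilde{\rho}$: all four sign conditions are favorable --- the boundary term is non-negative by illumination, the bilaplacian term is non-negative by Lemma~\ref{calcbile}, the Hessian $D^{2}\tilde{\rho}$ is positive semi-definite (being that of a convex gauge), and $\Delta\tilde{\rho}\gtrsim1/A$ on $B(0,A)\subset\mathbb{R}^{4}$. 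Controlling the time-boundary contribution by the four-dimensional Hardy inequality and integrating on $[0,T]$ should yield
\[
\int_{0}^{T}\!\!\int_{\tilde{\Omega}\cap B(0,A)}\bigl(|\nabla\tilde{u}|^{2}+|\tilde{u}|^{6}\bigr)\,dx\,dt\lesssim C\bigl(E(\tilde{u})\bigr),
\]
with $E(\tilde{u})$ roughly proportional to $L\cdot E(u)$.

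\emph{Slicing back.} On the part of $\tilde{\Omega}\cap B(0,A)$ with $|x_{4}|<L-T$, the integrand equals the three-dimensional one for $u$ weighted by a power of $\varphi$. Integrating in $x_{4}$ over this slab of length $\sim L-T$ produces a factor $\sim(L-T)$ on the left, giving a bound of the form
\[
\frac{1}{T}\int_{0}^{T}\!\!\int_{\Omega\cap B(0,A)}\bigl(|\nabla u|^{2}+|u|^{6}\bigr)\,dx\,dt\lesssim\frac{C(L\cdot E(u))}{T(L-T)}.
\]

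\emph{Main obstacle.} The delicate point is the calibration of $L$ as a function of $T$: the Morawetz constant picks up a nonlinear dependence on $E(\tilde{u})\sim L\cdot E(u)$, while the slab $|x_{4}|<L-T$ on which $\tilde{u}$ remembers $u$ shrinks unless $L\gg T$, and moving from the rough initial data of $u$ to the regularity $\dot{H}^{7/4}\times H^{3/4}$ required by Lemma~\ref{ext} costs an additional regularization parameter. Optimizing the interplay between these three effects should turn the naive $1/T$ decay into the stated rate $1/\ln T$; verifying this tuning rigorously while keeping the dependence in $E(u)$ clean is the technical heart of the proof.
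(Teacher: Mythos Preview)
Your four-dimensional extension and the identification of the ellipsoid cases in Lemma~\ref{calcbile} are exactly right, and so is the use of Lemma~\ref{ext} together with a density/regularization step. But the way you exploit the Morawetz identity is not the one that produces the logarithm, and in fact your bulk estimate fails as written.

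The gap is in the line ``should yield $\int_{0}^{T}\!\int_{\tilde\Omega\cap B(0,A)}(|\nabla\tilde u|^{2}+|\tilde u|^{6})\lesssim C(E(\tilde u))$''. The Hessian $D^{2}\tilde\rho$ of a gauge is only \emph{semi}-definite: it annihilates the radial direction, so $\int(D^{2}\tilde\rho\,\nabla\tilde u,\nabla\tilde u)$ does not dominate $\int|\nabla\tilde u|^{2}$ on any ball. Your subsequent slicing/calibration discussion therefore has no starting point for the gradient term, and the ``optimization'' you allude to cannot manufacture $1/\ln T$ (note also that $1/\ln T$ is \emph{weaker} than $1/T$, not stronger).

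What the paper does instead is to keep the \emph{boundary} term from the four-dimensional Morawetz identity and discard the (nonnegative) bulk terms. On the cylindrical boundary $\partial\tilde{\mathcal K}=\partial\mathcal K\times[-4T,4T]$ one has $\partial_{\tilde n}\tilde\rho=\dfrac{\rho}{\sqrt{\rho^{2}+z^{2}}}\,\partial_{n}\rho$, so by finite speed of propagation
\[
\int_{\partial\tilde{\mathcal K}}|\partial_{\tilde n}v|^{2}\,\partial_{\tilde n}\tilde\rho\,d\tilde\sigma
\;\gtrsim\;\int_{\partial\mathcal K}|\partial_{n}u|^{2}\,d\sigma\int_{-T}^{T}\frac{dz}{\sqrt{1+z^{2}}}
\;\sim\;(\ln T)\int_{\partial\mathcal K}|\partial_{n}u|^{2}\,d\sigma,
\]
and it is precisely this $\int_{-T}^{T}(1+z^{2})^{-1/2}\,dz$ that produces the logarithm. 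Combining with Hardy to bound the time-boundary by $E(v)\lesssim T\,C(E(u))$ gives $\int_{0}^{T}\!\int_{\partial\Omega}|\partial_{n}u|^{2}\lesssim \frac{T}{\ln T}C(E(u))$. The local energy is then recovered from this boundary control by a \emph{second}, purely three-dimensional Morawetz identity with the strictly convex weight $|x|^{2}$ (whose Hessian is $2\,\mathrm{Id}$), which yields $\int_{0}^{T}\!\int_{\Omega\cap B(0,A)}(|\nabla u|^{2}+|u|^{6})\lesssim C(E)+\int_{0}^{T}\!\int_{\partial\Omega}|\partial_{n}u|^{2}$ and hence the stated bound. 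The regularization $u\to u^{\delta}$ is passed to the limit only in this last, bulk inequality, avoiding the trace issue you would otherwise face.
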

\begin{proof}
Let $\delta>0$. There exists $u_{0}^{\delta}$, $u_{1}^{\delta}$,
smooth functions vanishing on $\partial\Omega$ such that
\[
\Vert u_{0}-u_{0}^{\delta}\Vert_{\dot{H}^{1}(\Omega)}+\Vert u_{1}-u_{1}^{\delta}\Vert_{L^{2}(\Omega)}\leq\delta.
\]
We denote by $u^{\delta}\in C^{0}(\mathbb{R},\dot{H}^{1}(\Omega))\cap C^{1}(\mathbb{R},L^{2}(\Omega))$
the solution of (\ref{eq:nlw}) in $\Omega$ with data $(u_{0}^{\delta},u_{1}^{\delta})$.

Let $T>0$ and $\phi\in C_{c}^{\infty}(\mathbb{R})$ be such that
$0\leq\phi\leq1$, $\phi=1$ on $[-1,1]$ and $\phi=0$ on $[-2,2]^{c}$.
We take $\chi_{T}=\phi(\frac{\cdot}{2T})$ and, for $(x,z)\in\Omega\times\mathbb{R}$
\begin{gather*}
v_{0}^{\delta}(x,z)=u_{0}^{\delta}(x)\chi_{T}(z)\chi_{T}(|x|),\\
v_{1}^{\delta}(x,z)=u_{1}^{\delta}(z)\chi_{T}(z)\chi_{T}(|x|).
\end{gather*}
Let us denote by $\rho$ the gauge of ellipsoïd we are dealing with,
consider
\[
\tilde{\mathcal{K}}=\mathcal{K}\times[-4T,4T],\ \tilde{\Omega}=\mathbb{R}^{4}\backslash\tilde{\mathcal{K}}
\]
\[
\tilde{\rho}(x,z)=\sqrt{\rho(x)^{2}+z^{2}},
\]
and $v^{\delta}\in C(\mathbb{R},H^{\frac{7}{4}}(\tilde{\Omega}))\cap C^{1}(\mathbb{R},H^{\frac{3}{4}}(\tilde{\Omega}))$
the solution of the four dimensional equation
\begin{equation}
\begin{cases}
(\partial_{t}^{2}-\Delta)v^{\delta}+(v^{\delta})^{5}=0\quad\text{in }\mathbb{R}\times\Omega\\
v_{|t=0}^{\delta}=v_{0}^{\delta},\quad\partial_{t}v_{|t=0}^{\delta}=v_{1}^{\delta},\quad v_{|\mathbb{R}\times\partial\Omega}^{\delta}=0
\end{cases}\label{eq:nlw-1-1-2}
\end{equation}
given by lemma \ref{ext}.

Notice that, by the finite speed of propagation, 
\begin{gather}
v^{\delta}(x,z,t)=u^{\delta}(x,t),\label{eq:fsp}\\
\forall x\in\Omega\cap B(0,2T-t),\;\forall t\in[0,2T[,\;\forall z\in[-2T+t,2T-t].\nonumber 
\end{gather}
We denote by $n$ the outward pointing normal vector to $\partial\mathcal{K}$
and $\tilde{n}=(n,0)$ the outward pointing normal vector to $\partial\tilde{\mathcal{K}}$.
The momentum derivation (\ref{eq:mor}) applied to $v^{\delta}$ gives
\begin{multline}
\frac{d}{dt}\left(-\int_{\tilde{\Omega}}\partial_{t}v^{\delta}\nabla v^{\delta}\cdot\nabla\tilde{\rho}-\frac{1}{2}\int_{\tilde{\Omega}}\Delta\tilde{\rho}v^{\delta}\partial_{t}v^{\delta}\right)=\int_{\tilde{\Omega}}(D^{2}\tilde{\rho}\nabla v^{\delta},\nabla v^{\delta})+\frac{1}{4}\Delta\tilde{\rho}|v^{\delta}|^{6}-\Delta^{2}\tilde{\rho}|v^{\delta}|^{2}\\
+\frac{1}{2}\int_{\partial\tilde{V}}|\partial_{\tilde{n}}v^{\delta}|^{2}\partial_{\tilde{n}}\tilde{\rho}.\label{eq:41}
\end{multline}
By lemma \ref{calcbile}, $\Delta^{2}\tilde{\rho}\leq0$. Moreover,
as $\tilde{\rho}$ is convex, $(D^{2}\tilde{\rho}\nabla v^{\delta},\nabla v^{\delta})\geq0$
and $\Delta\tilde{\rho}\geq0$. Therefore, integrating (\ref{eq:41})
we obtain
\begin{equation}
\frac{1}{2}\int_{0}^{T}\int_{\partial\tilde{K}}|\partial_{\tilde{n}}v^{\delta}|^{2}\partial_{\tilde{n}}\tilde{\rho}\:d\sigma dt\leq\left[-\int_{\tilde{\Omega}}\partial_{t}v^{\delta}\nabla v^{\delta}\cdot\nabla\tilde{\rho}-\frac{1}{2}\int_{\tilde{\Omega}}\Delta\tilde{\rho}v^{\delta}\partial_{t}v^{\delta}\right]_{0}^{T}.\label{eq:42}
\end{equation}
Let us take $T>0$ large enought so that $\partial V\subset\Omega\cap B(0,T)$.
Then, for $t\in[0,T]$,
\[
\int_{\partial\tilde{K}}|\partial_{\tilde{n}}v^{\delta}|^{2}\partial_{\tilde{n}}\tilde{\rho}\,d\tilde{\sigma}=\int_{\mathbb{R}}\int_{\partial K}|\partial_{\tilde{n}}v^{\delta}|^{2}\partial_{\tilde{n}}\tilde{\rho}\,d\sigma dz\geq\int_{-T}^{T}\int_{\partial K}|\partial_{\tilde{n}}v^{\delta}|^{2}\partial_{\tilde{n}}\tilde{\rho}\,d\sigma dz.
\]
But, by finite speed of propagation (\ref{eq:fsp}),
\[
\int_{-T}^{T}\int_{\partial K}|\partial_{\tilde{n}}v^{\delta}|^{2}\partial_{\tilde{n}}\tilde{\rho}\,d\sigma dz=\int_{-T}^{T}\int_{\partial K}|\partial_{n}u^{\delta}|^{2}\partial_{\tilde{n}}\tilde{\rho}\,d\sigma dz,
\]
so we obtain, as $\partial_{n}\rho\geq C$ by the definition of an
illuminated subset 
\begin{multline*}
\int_{\partial\tilde{K}}|\partial_{\tilde{n}}v^{\delta}|^{2}\partial_{\tilde{n}}\tilde{\rho}\,d\tilde{\sigma}\geq\int_{-T}^{T}\int_{\partial K}|\partial_{\tilde{n}}u^{\delta}|^{2}\partial_{\tilde{n}}\tilde{\rho}\,d\sigma dz\\
=\int_{-T}^{T}\int_{\partial K}|\partial_{n}u^{\delta}|^{2}\frac{\rho}{\sqrt{\rho^{2}+z^{2}}}\partial_{n}\mathbb{\rho}\,d\sigma dz\gtrsim\int_{\partial K}|\partial_{n}u^{\delta}|^{2}\int_{-T}^{T}\frac{1}{\sqrt{1+z^{2}}}\,dzd\sigma,
\end{multline*}
and the integration of the right hand side gives
\begin{equation}
\int_{\partial K}|\partial_{n}u^{\delta}|^{2}\:d\sigma\lesssim\frac{1}{\ln T}\int_{\partial\tilde{K}}|\partial_{\tilde{n}}v^{\delta}|^{2}\partial_{\tilde{n}}\tilde{\rho}\,d\tilde{\sigma}.\label{eq:45}
\end{equation}
Moreover, as $\nabla\tilde{\rho}$ is bounded and $0\leq\Delta\tilde{\rho}\leq\frac{C}{|(x,z)|}$,
by the Cauchy-Schwarz inequality and the Hardy inequality 
\[
\int_{\Omega}\frac{|f|^{2}}{|x|^{2}}\lesssim\int_{\Omega}|\nabla f|^{2}\ \text{for }f\in\dot{H}_{0}^{1}(\Omega)
\]
we obtain
\begin{equation}
\big|\left[-\int_{\tilde{\Omega}}\partial_{t}v^{\delta}\nabla v^{\delta}\cdot\nabla\tilde{\rho}-\frac{1}{2}\int_{\tilde{\Omega}}\Delta\tilde{\rho}v^{\delta}\partial_{t}v^{\delta}\right]_{0}^{T}\big|\lesssim E(v^{\delta}),\label{eq:46}
\end{equation}
and this last inequality combined to (\ref{eq:42}) and (\ref{eq:45})
gives
\begin{equation}
\int_{0}^{T}\int_{\partial\Omega}(\partial_{n}u^{\delta})^{2}\:d\sigma dt\lesssim\frac{1}{\ln T}E(v^{\delta}).\label{eq:47}
\end{equation}

It remains to estimate the energy of $v^{\delta}$. We have
\begin{eqnarray*}
\int_{\tilde{\Omega}}|\nabla v_{0}^{\delta}|^{2} & = & \int_{\tilde{\Omega}}|\nabla u_{0}^{\delta}(x)\chi_{T}(z)\chi_{T}(|x|)+u_{0}^{\delta}(x)\chi_{T}(z)\chi_{T}'(|x|)\cdot\frac{x}{|x|}|^{2}\,dxdz\\
 &  & +\int_{\tilde{\Omega}}|u_{0}^{\delta}(x)|^{2}\chi_{T}'(z)^{2}\chi_{T}(|x|)^{2}\,dxdz\\
 & \lesssim & \int_{\tilde{\Omega}}|\nabla u_{0}^{\delta}(x)|^{2}\chi_{T}(z)^{2}\chi_{T}(|x|)^{2}\,dxdz+\int_{\tilde{\Omega}}|u_{0}^{\delta}(x)|^{2}\chi(z)^{2}\chi'(|x|){}^{2}\,dxdz\\
 &  & \qquad+\int_{\tilde{\Omega}}|u_{0}^{\delta}(x)|^{2}\chi_{T}'(z)^{2}\chi_{T}(|x|)^{2}\,dxdz,
\end{eqnarray*}
so, by the Hölder inequality 
\begin{eqnarray*}
\int_{\tilde{\Omega}}|\nabla v_{0}^{\delta}|^{2} & \lesssim & \int_{\Omega}|\nabla u_{0}^{\delta}(x)|^{2}dx\int_{\mathbb{R}}\chi_{T}(z)^{2}\,dz+\left(\int_{\Omega}|u_{0}^{\delta}(x)|^{6}dx\right)^{\frac{1}{3}}\left(\int_{\Omega}\chi_{T}'(|x|)^{3}dx\right)^{\frac{2}{3}}\int_{\mathbb{R}}\chi_{T}(z)^{2}dz\\
 &  & \qquad+\left(\int_{\Omega}|u_{0}^{\delta}(x)|^{6}dx\right)^{\frac{1}{3}}\left(\int_{\Omega}\chi_{T}(|x|)^{3}dx\right)^{\frac{2}{3}}\int_{\mathbb{R}}\chi_{T}'(z)^{2}dz\\
 & \lesssim & \int_{\Omega}|\nabla u_{0}^{\delta}|^{2}\int_{-4T}^{4T}\Vert\phi\Vert_{\infty}^{2}+\left(\int_{\Omega}|u_{0}^{\delta}|^{6}\right)^{\frac{1}{3}}\left(\int_{B(0,4T)}\left(\frac{1}{2T}\Vert\phi'\Vert_{L^{\infty}}\right)^{3}\right)^{\frac{2}{3}}\int_{-4T}^{4T}\Vert\phi\Vert_{L^{\infty}}^{2}\\
 &  & \qquad+\left(\int_{\Omega}|u_{0}^{\delta}|^{6}\right)^{\frac{1}{3}}\left(\int_{B(0,4T)}\Vert\phi\Vert_{L^{\infty}}^{3}\right)^{\frac{2}{3}}\int_{-4T}^{4T}\frac{1}{4T^{2}}\Vert\phi'\Vert_{L^{\infty}}^{2}\\
 & \lesssim & T\int_{\Omega}|\nabla u_{0}^{\delta}|^{2}+T\left(\int_{\Omega}|u_{0}^{\delta}|^{6}\right)^{\frac{1}{3}}+T\left(\int_{\Omega}|u_{0}^{\delta}|^{6}\right)^{\frac{1}{3}}\\
 & \lesssim & TC(E(u^{\delta})).
\end{eqnarray*}
Moreover,
\[
\int_{\tilde{\Omega}}|v_{0}^{\delta}|^{6}=\int_{\Omega}\int_{\mathbb{R}}|u_{0}^{\delta}(x)|^{6}\chi_{T}(z)^{6}\chi_{T}(|x|)^{6}dxdz\leq4T\int_{\Omega}|u_{0}^{\delta}|^{6},
\]
and
\[
\int_{\tilde{\Omega}}|v_{1}^{\delta}|^{2}=\int_{\Omega}\int_{\mathbb{R}}|u_{1}^{\delta}(x)|^{2}\chi_{T}(z)^{2}\chi_{T}(|x|)^{2}dxdz\leq4T\int_{\Omega}|u_{1}^{\delta}|^{2},
\]
therefore 
\[
E(v^{\delta})\lesssim TC(E(u^{\delta})),
\]
and by (\ref{eq:46}) we obtain 
\begin{equation}
\int_{0}^{T}\int_{\partial K}|\partial_{n}u^{\delta}|^{2}\:d\sigma dt\lesssim\frac{T}{\ln T}C(E(u^{\delta})).\label{eq:prefinal}
\end{equation}

Notice that we can not pass to the limit directly in this expression
because as mentionned before, the application
\[
u\in\dot{H}^{1}\cap\left\{ \text{value in time \ensuremath{t} of solutions of NLW}\right\} \longrightarrow\partial_{n}u\in L^{2}(\partial\Omega)
\]
 is not known to be continuous. But, notice that using \lemref{bordBS}
with the weight $\chi=|x|^{2}$ gives in particular 
\[
\int_{0}^{T}\int_{\Omega\cap B(0,A)}|\nabla u^{\delta}(x,t)|^{2}+|u^{\delta}(x,t)|^{6}\ dxdt\lesssim\int_{0}^{T}\int_{\partial K}|\partial_{n}u^{\delta}|^{2}\:d\sigma dt.
\]
Therefore, combining the last inequality with (\ref{eq:prefinal})
we get
\[
\frac{1}{T}\int_{0}^{T}\int_{\Omega\cap B(0,A)}|\nabla u^{\delta}(x,t)|^{2}+|u^{\delta}(x,t)|^{6}\ dxdt\lesssim\frac{1}{\ln T}C(E(u^{\delta})),
\]
and we can let $\delta$ go to zero in this expression: as $u_{0}^{\delta}\underset{\delta\rightarrow0}{\longrightarrow}u_{0}$
in $\dot{H}^{1}$ and $u_{1}^{\delta}\underset{\delta\rightarrow0}{\longrightarrow}u_{1}$
in $L^{2}$, we obtain using the energy conservation law that $C(E(u^{\delta}))\underset{\delta\rightarrow0}{\longrightarrow}C(E(u)),$
and, because the problem (\ref{eq:nlw}) is well posed, the left hand
side goes as $\delta$ goes to zero to
\[
\frac{1}{T}\int_{0}^{T}\int_{\Omega\cap B(0,A)}|\nabla u(x,t)|^{2}+|u(x,t)|^{6}\ dxdt,
\]
and the proposition holds.
\end{proof}
Finally, we can conclude:
\begin{proof}[Proof of Theorem \ref{th3}.]
 The above proposition combined with the scattering criterion of
\lemref{contrenerg} gives immediately the result.
\end{proof}

\subsection*{Aknowledgment. }

The author would like to thanks Nicolas Burq for helpful discussions,
and Fabrice Planchon for enlightening discussions about multipliers
methods.

\bibliographystyle{amsalpha}
\bibliography{megaref}

\end{document}